\newtheorem{thm}{Theorem}[section]
\newtheorem{crl}[thm]{Corollary}
\newtheorem{lmm}[thm]{Lemma}
\newtheorem{prp}[thm]{Proposition}
\newtheorem{theorem-definition}[thm]{Theorem-Definition}
\theoremstyle{definition}
\newtheorem{dfn}[thm]{Definition}
\newtheorem{exa}[thm]{Example}%%%
\theoremstyle{remark}
\newtheorem{rem}{Remark}%%%
\def\aaa{{\widetilde{\mathfrak{B}}}}
\def\aa{{\mathfrak{B}}}
\def\mm{{{\widetilde{\mathfrak{m}}}_c}}
\def\m2{{{\widetilde{\mathfrak{m}}}_2}}
\DeclareSymbolFont{cyss}{OT2}{wncyss}{m}{n}
\DeclareMathSymbol{\sh}{\mathbin}{cyss}{`x}
\begin{document}

\baselineskip 20pt
\title[Desingularization of multiple zeta-functions]{Desingularization of multiple zeta-functions of generalized Hurwitz-Lerch type and evaluation of $p$-adic multiple $L$-functions at arbitrary integers}
%\dedicatory{Here is a Dedication}           %optional
\author[H. Furusho]{Hidekazu Furusho}
\address{H. Furusho:\ Graduate School of Mathematics, 
Nagoya University, Chikusa-ku, Nagoya
464-8602, Japan}
\email{furusho@math.nagoya-u.ac.jp}
\author[Y. Komori]{Yasushi Komori}
\address{Y. Komori:\ Department of Mathematics, 
Rikkyo University, 
Nishi-Ikebukuro, Toshima-ku, 
Tokyo 171-8501, Japan}
\email{komori@rikkyo.ac.jp}
\author[K. Matsumoto]{Kohji Matsumoto}
\address{K. Matsumoto:\ Graduate School of Mathematics, 
Nagoya University, Chikusa-ku, Nagoya
464-8602, Japan}
\email{kohjimat@math.nagoya-u.ac.jp}
\author[H. Tsumura]{Hirofumi Tsumura}
\address{H. Tsumura:\ Department of Mathematics and Information Sciences, Tokyo Metropolitan University, 1-1, Minami-Ohsawa, Hachioji, Tokyo 192-0397 Japan}
\email{tsumura@tmu.ac.jp}
\subjclass[2010]{11M32, 11S40, 11G55}

\keywords{\textit{multiple zeta-function, multiple polylogarithm, 
desingularization, $p$-adic multiple $L$-function, $p$-adic multiple polylogarithm}}         %optional

\if0
%%%%%%%%%%%%%%%%%%%%%%%%%%%%%%%%%%%%%%%%%%%%%%%%%%
%%%For Header %%%%%%%%%%%%%%%%%%%%%%%%%%%%%%%%%%%%
%%%%%%%%%%%%%%%%%%%%%%%%%%%%%%%%%%%%%%%%%%%%%%%%%%
\VolumeNo{x}           %editors must define this.
\YearNo{201x}           %editors must define this.
\PagesNo{000--000}      %editors must define this.
%\setcounter{page}{000} %editors must define this.
%%%%%%%%%%%%%%%%%%%%%%%%%%%%%%%%%%%%%%%%%%%%%%%%%%
% The above lines are for the editor's use only.  Please do not delete them.
%
%
%%%%%%%%%%%%%%%%%%%%%%%%%%%%%%%%%%%%%%%%%%%%%%%%%%
%%%For Foot of Title Page %%%%%%%%%%%%%%%%%%%%%%%%
%%%%%%%%%%%%%%%%%%%%%%%%%%%%%%%%%%%%%%%%%%%%%%%%%%
\communication{Received April 20, 201x.
Revised September 11, 201x.}      %editors must define this.
%%%%%%%%%%%%%%%%%%%%%%%%%%%%%%%%%%%%%%%%%%%%%%%%%%
% The above lines are for the editor's use only.  Please do not delete them.
%
%
%%%%%%%%%%%%%%%%%%%%%%%%%%%%%%%%%%%%%%%%%%%%%%%%%%%%%
%
\fi
%\begin{document}
%
% The text goes here.  
% Be sure to use the appropriate "theorem-like" environment as 
% is the following examples.  Never use plain TeX commands for these, as
% they will cause interference with the styles of other papers. 

%\tableofcontents      %optional
\begin{abstract}      %optional
We study analytic properties of multiple zeta-functions of generalized 
Hurwitz-Lerch type. First, as a special type of them, we consider multiple zeta-functions of 
generalized Euler-Zagier-Lerch type and investigate their analytic properties which were already announced in our previous paper. 
Next we give `desingularization' of multiple zeta-functions of generalized 
Hurwitz-Lerch type, which include those of generalized Euler-Zagier-Lerch type, the Mordell-Tornheim type, and so on. As a result, 
the desingularized multiple zeta-function turns out to be an entire function and can be expressed as a finite sum of ordinary multiple zeta-functions of the same type. 
As applications, we explicitly compute special values of desingularized double zeta-functions of Euler-Zagier type.
We also extend our previous results concerning a relationship 
between $p$-adic multiple $L$-functions and $p$-adic multiple star polylogarithms 
to more general % indices %that are not necessarily all positive integers.
indices with arbitrary (not necessarily all positive) integers.
\end{abstract}
\maketitle
%%%%%%%%%%%%%%%%%%%%%%%%%%%%%%%%%%%%%%%%%%%%%%%%%%%%%%%%%%%%%%%%%%%%%%%%%%%%%%%
%\baselineskip 16pt 
%\hfill\texttt{\jobname.tex}\qquad\today

\tableofcontents
%%%%%%%%%%%%%%%%%%%%%%%%%%%%%%%%%%%%%%%%%%%%%%%%%%%%%%%%%%%%%%%%%%%%%%%%%%%%%%%
\setcounter{section}{-1}
\section{Introduction}
%%%%%%%%%%%%%%%%%%%%%%%%%%%%%%%%%%%%%%%%%%%%%%%%%%%%%%%%%%%%%%%%%%%%%%%%%%%%%%%

%Let $\mathbb{N}$ be the set of positive integers, and $\mathbb{N}_0=\mathbb{N}\cup\{0\}$.

In the present paper we continue our study developed in our previous papers \cite{FKMT1,FKMT2},
with supplying some proofs of results in \cite{FKMT1} which were stated with no proof.  
In \cite{FKMT1},
we studied multiple zeta-functions of generalized Euler-Zagier-Lerch 
type (see below) and considered their analytic properties. 
Based on those considerations, 
we introduced 
the method of \textit{desingularization} of multiple zeta-functions, which is to resolve all singularities of them.
By this method we constructed the desingularized multiple zeta-function which is entire and can be expressed as a finite sum of ordinary multiple zeta-functions.

%Singularities of ordinary multiple zeta-functions of Euler-Zagier type are determined explicitly (see \cite{AET01}).   After favorable cancellations, the desingularized multiple zeta-function turns out to be an entire function (see \cite[Theorem 1.19]{FKMT}). 

%A part of considerations in \cite{FKMT} were stated with no proof. 
%In this paper, we supply this part and give more developed results.

The first main purpose of the present paper is to extend our theory of
desingularization to the following more general situation.

Let $\xi_k,\gamma_{jk},\beta_j$ ($1\leqslant  j\leqslant  d,1\leqslant  k\leqslant  r$) 
be complex parameters with $|\xi_k|\leqslant  1$, real parts $\Re \gamma_{jk}\geqslant  0$, 
$\Re \beta_j >0$, and let $s_j$ 
($1\leqslant  j\leqslant  d$) be complex variables.   We assume that for each $j$ 
($1\leqslant  j\leqslant  d$), at least one of $\Re \gamma_{jk}>0$.
We define the \textbf{multiple zeta-functions of generalized Hurwitz-Lerch type}
% of depth $d$ and of rank $r$ 
by
\begin{align}
%\begin{multline}
  \label{eq:L_series}
  &\zeta_r((s_j);(\xi_k);(\gamma_{jk});(\beta_j)) \\
  &=
  \sum_{m_1=0}^\infty\cdots\sum_{m_r=0}^\infty
  \frac{\xi_1^{m_1}\cdots \xi_r^{m_r}}
  {(\beta_1+\gamma_{11}m_1+\cdots+\gamma_{1r}m_r)^{s_1}\cdots (\beta_d+\gamma_{d1}m_1+\cdots+\gamma_{dr}m_r)^{s_d}}
  \notag\\
  &=
%  \sum_{\substack{m_k\geqslant 0\\1\leqslant  k\leqslant  r}}
  \sum_{m_1=0}^\infty\cdots\sum_{m_r=0}^\infty
  \frac{\prod_{k=1}^r \xi_k^{m_k}}
  {\prod_{j=1}^d(\beta_j+\sum_{k=1}^r \gamma_{jk}m_k)^{s_j}}.\notag
%\end{multline}
\end{align}
Obviously this is convergent absolutely when $\Re s_j>1$ for $1\leqslant  j\leqslant  d$, and
it is known that this can be continued meromorphically to the whole space
$\mathbb{C}^d$ (see \cite{Kom10}).

In the present paper we will construct desingularized multiple zeta-functions,
which will be expressed as a finite sum of 
$\zeta_r((s_j);(\xi_k);(\gamma_{jk});(\beta_j))$.

The \textbf{multiple zeta-function of generalized Euler-Zagier-Lerch            
type} defined by
\begin{align}\label{Def-EZL-zeta}                                                       
\zeta_r((s_j);(\xi_j);(\gamma_j))=\sum_{m_1=1}^{\infty}\cdots                           
\sum_{m_r=1}^{\infty}\prod_{j=1}^r \xi_j^{m_j}(m_1\gamma_1+\cdots+m_j\gamma_j)          
^{-s_j}                                                                                
\end{align}
for parameters $\xi_j,\gamma_{j}\in \mathbb{C}$ ($1\leqslant  j\leqslant  r$) with $|\xi_j|= 1$ and 
$\Re\gamma_j>0$, is a special case of \eqref{eq:L_series}.
In fact, putting $d=r$, $\gamma_{jk}=\gamma_k$ ($j\geqslant  k$), 
$\gamma_{jk}=0$ ($j<k$), and
$\beta_j=\gamma_1+\cdots+\gamma_j$, \eqref{eq:L_series} reduces to
\eqref{Def-EZL-zeta}.
This \eqref{Def-EZL-zeta} was the main actor of %(the complex analytic part of) 
the previous paper \cite{FKMT1}.

When $\xi_j=\gamma_j=1$ for all $j$, \eqref{Def-EZL-zeta} is the famous Euler-Zagier
multiple sum (Hoffman \cite{Hof92}, Zagier \cite{Zag94}):
\begin{align}                                                                           
& \zeta_r(s_1,\ldots,s_r)=\sum_{m_1=1}^{\infty}\cdots \sum_{m_r=1}^{\infty}
\prod_{j=1}^r(m_1+\cdots+m_j)^{-s_j}. \label{EZ-zeta}                                   
\end{align}
Singularities of \eqref{EZ-zeta} have been determined explicitly (see Akiyama, Egami and Tanigawa \cite{AET01}).

On the other hand,
when $r=1$ and $\gamma_1=1$, then the above series coincides with the Lerch
zeta-function
\begin{align}\label{Def-Lerch}
\phi(s_1,\xi_1)=\sum_{m_1=1}^{\infty}\xi_1^{m_1}m_1^{-s_1}.
\end{align}
It is known that $\phi(s_1,\xi_1)$ is entire if $\xi_1\neq 1$, while if $\xi_1=1$
then $\phi(s_1,1)$ is nothing but the Riemann zeta-function $\zeta(s_1)$ and has 
a simple pole at $s_1=1$.

%First we prove the following result which was announced in \cite[Section 1]{FKMT}. 
%
%\begin{thm}\label{thm1}
%The function $\zeta_r((s_j);(\xi_j);(\gamma_j))$ can be continued meromorphically
%to the whole space $\mathbb{C}^r$.   Moreover,
%
%{\rm (i)} If $\xi_j\neq 1$ for all $j$ {\rm(}$1\leqslant  j\leqslant  r${\rm)}, then 
%$\zeta_r((s_j);(\xi_j);(\gamma_j))$ is entire.
%
%{\rm (ii)} If $\xi_j\neq 1$ for all $j$ {\rm(}$1\leqslant  j\leqslant  r-1${\rm)} and 
%$\xi_r=1$, then $\zeta_r((s_j);(\xi_j);(\gamma_j))$ has a unique simple singular 
%hyperplane $s_r=1$.
%
%{\rm (iii)} If $\xi_j=1$ for some $j$ {\rm(}$1\leqslant  j\leqslant  r-1${\rm)}, then
%$\zeta_r((s_j);(\xi_j);(\gamma_j))$ has infinitely many simple singular
%hyperplanes.
%\end{thm}

The plan of the present paper is as follows.

In Section \ref{sec-a} we prove that $\zeta_r((s_j);(\xi_j);(\gamma_j))$
can be continued meromorphically
to the whole space $\mathbb{C}^r$, and its singularities can be explicitly
given (Theorems \ref{thm1} and \ref{thm2}).   This result was announced in \cite[Section 2]{FKMT1}
without proof.
The assertion of the meromorphic continuation is, as mentioned above, already
given in \cite{Kom10}.   However in Section \ref{sec-a} we give an alternative
argument, based on Mellin-Barnes integrals, which is probably more suitable to obtain
explicit information on singularities.

%In what follows, $\varepsilon$ denotes an arbitrarily small positive number, not necessarily the same at each occurrence.

In Section \ref{sec-3}, 
we give desingularization of the multiple zeta-functions of generalized Hurwitz-Lerch type (see \eqref{eq:L_series}), which include those of generalized Euler-Zagier-Lerch type, the Mordell-Tornheim type, and so on. In fact, we will show that these desingularized multiple zeta-functions are entire (see Theorem \ref{thm:ann}), which was already announced in \cite[Remark 4.5]{FKMT1}. Actually this includes our previous result shown in \cite[Theorem 3.4]{FKMT1}. We further show that these desingularized multiple zeta-functions can be expressed as finite sums of ordinary multiple zeta-functions (see Theorem \ref{thm:des}). 

In Section \ref{sec-4}, we give some examples of desingularization of various multiple zeta-functions. The main technique is a certain generalization of ours used in the proof of \cite[Theorem 3.8]{FKMT1}. In particular, we give desingularization of multiple zeta-functions of root systems introduced by the second, the third and the fourth authors (see, for example, \cite{KMT10}). 

In Section \ref{sec-5}, we study special values of desingularized double zeta-functions of Euler-Zagier type. More generally, we give some functional relations for desingularized double zeta-functions and ordinary double zeta-functions of Euler-Zagier type (see Propositions \ref{P-5-2}, \ref{P-5-4} and \ref{P-5-6}). By marvelous cancellations among singularities of ordinary double zeta-functions, we can explicitly compute special values of desingularized double zeta-functions of Euler-Zagier type at any integer points (see Examples \ref{Ex-5-3}, \ref{Ex-5-5}, \ref{Ex-5-7} and Proposition \ref{P-5-9}).

An %other 
important aspect of \cite{FKMT2} is the construction of the theory of
$p$-adic multiple $L$-functions.   The second main purpose of the present paper is
to give a certain extension of our result on special values of $p$-adic multiple
$L$-functions.

In \cite{KMT11}, the second, the third and the
fourth authors introduced $p$-adic double $L$-functions, as the double analogue
of the classical Kubota-Leopoldt $p$-adic $L$-functions.   In \cite{FKMT2}, we
generalized the argument in \cite{KMT11} to define $p$-adic multiple $L$-functions.
On the other hand, the first author \cite{Furu04} \cite{Furu07} developed the
theory of $p$-adic multiple polylogarithms under a very different motivation.
A remarkable discovery in \cite{FKMT2} is that there is a connection between these
two multiple notions.   In fact, we proved that the values of $p$-adic multiple
$L$-functions at positive integer points can be described in terms of $p$-adic
multiple star polylogarithms (\cite[Theorem 3.41]{FKMT2}).

In Section \ref{pMLF=pMPL} of the present paper, we extend this result to 
obtain the description of the values of $p$-adic multiple $L$-functions at 
arbitrary (not necessarily all positive) integer points in terms of $p$-adic multiple star polylogarithms
(Theorem \ref{L-Li theorem-2}).

%%%%%%%%%%%%%%%%%%%%%%%%%%%%%%%%%%%%%%%%%%%%%%%%%%%%%%%%%%%%%%%%%%%%%%%%%%%%%%%%
\section{The meromorphic continuation and the location of singularities}\label{sec-a}
%%%%%%%%%%%%%%%%%%%%%%%%%%%%%%%%%%%%%%%%%%%%%%%%%%%%%%%%%%%%%%%%%%%%%%%%%%%%%%%%

The purpose of this section is to prove the following result which was announced 
in \cite[Theorem 2.3]{FKMT1}.

\begin{thm}\label{thm1}
The function $\zeta_r((s_j);(\xi_j);(\gamma_j))$ can be continued meromorphically
to the whole space $\mathbb{C}^r$.   Moreover,

{\rm (i)} If $\xi_j\neq 1$ for all $j$ {\rm(}$1\leqslant  j\leqslant  r${\rm)}, then
$\zeta_r((s_j);(\xi_j);(\gamma_j))$ is entire.

{\rm (ii)} If $\xi_j\neq 1$ for all $j$ {\rm(}$1\leqslant  j\leqslant  r-1${\rm)} and
$\xi_r=1$, then $\zeta_r((s_j);(\xi_j);(\gamma_j))$ has a unique simple singular
hyperplane $s_r=1$.

{\rm (iii)} If $\xi_j=1$ for some $j$ {\rm(}$1\leqslant  j\leqslant  r-1${\rm)}, then
$\zeta_r((s_j);(\xi_j);(\gamma_j))$ has infinitely many simple singular
hyperplanes.
\end{thm}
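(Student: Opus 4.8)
The plan is to establish the meromorphic continuation and locate the singular hyperplanes by means of a repeated application of the Mellin–Barnes integral formula, which converts the multiple series \eqref{Def-EZL-zeta} into an iterated integral of products of Lerch zeta-functions (and ordinary single Lerch zeta-functions at the bottom level), whose analytic behavior is completely understood from the discussion around \eqref{Def-Lerch}. Recall the classical Mellin–Barnes formula
\begin{align}
(1+\lambda)^{-s}=\frac{1}{2\pi i}\int_{(c)}\frac{\Gamma(s+z)\Gamma(-z)}{\Gamma(s)}\lambda^{z}\,dz,
\end{align}
valid for $|\arg\lambda|<\pi$, $\Re s>0$, and $-\Re s<c<0$. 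First I would apply this with the splitting $m_1\gamma_1+\cdots+m_j\gamma_j=(m_1\gamma_1+\cdots+m_{j-1}\gamma_{j-1})\bigl(1+\frac{m_j\gamma_j}{m_1\gamma_1+\cdots+m_{j-1}\gamma_{j-1}}\bigr)$ to the factor with index $s_j$, peeling off the variable $m_j$, and iterate. After $r-1$ steps one is left with a single sum over $m_1$ giving a Lerch zeta-function $\phi(\cdot,\xi_1)$ in a shifted variable, times Barnes integrals each of which contributes a factor $\phi(\cdot,\xi_k)$ for $2\leqslant k\leqslant r$ together with ratios of Gamma-functions. The regions of convergence of the various Barnes integrals force the real parts of the original $s_j$ to lie in an appropriate cone, but since each factor in the resulting integrand is meromorphic on all of $\mathbb{C}$, shifting the contours past the poles yields the meromorphic continuation to $\mathbb{C}^r$, the residual terms being again of the same shape.

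Next, for the location of singularities, I would track which factors in the integrand can produce poles and, crucially, when the contour-shift picks them up. The key observation is the dichotomy for the single Lerch zeta-function noted after \eqref{Def-Lerch}: $\phi(w,\xi)$ is entire when $\xi\neq1$, and equals $\zeta(w)$ with its unique simple pole at $w=1$ when $\xi=1$. Thus in case (i), every Lerch factor appearing is entire, and the only possible poles come from the $\Gamma$-factors in the Barnes kernels; a careful bookkeeping of the contour positions shows these are never actually encountered (or cancel against the $1/\Gamma(s_j)$ denominators), so $\zeta_r$ is entire. This is exactly the mechanism of \cite[Lemma]{FKMT1}-style arguments and is essentially the generalization of the classical fact that the Lerch zeta-function with $\xi\neq1$ is entire.

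For cases (ii) and (iii) I would isolate the contribution of the factor $\phi(s_k+(\text{shift}),\xi_k)$ with $\xi_k=1$. When only $\xi_r=1$, the pole of $\zeta(s_r-z_{r-1}-\cdots)$ at argument $1$, combined with the residue calculus over the single Barnes variable attached to $m_r$ (whose contour can be shifted to pick up exactly this pole), produces a single simple hyperplane, which after unwinding the shifts is $s_r=1$; one must check that no other hyperplane survives, i.e. that the potential poles from the remaining entire Lerch factors and the Gamma-ratios do not contribute new singular loci. When $\xi_j=1$ for some $j\leqslant r-1$, the corresponding $\zeta$-factor sits inside one or more further Barnes integrations, and its simple pole at argument $1$ gets convolved against the poles of the $\Gamma(-z)$-type factors, generating the infinite family $s_j+s_{j+1}+\cdots+s_\ell=\ell-j+1-n$ (with $n$ ranging over nonnegative integers and $\ell$ over $j\leqslant\ell\leqslant r$, up to the precise indexing of which hyperplanes actually occur), hence infinitely many simple singular hyperplanes. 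The main obstacle I anticipate is the combinatorial bookkeeping in this last step: one has to argue precisely which contour shifts are permissible, that the relevant poles of the $\Gamma$-factors are simple and are genuinely picked up (not cancelled), and that the Lerch-factor pole interacts with them to produce \emph{infinitely many distinct} hyperplanes rather than finitely many — this requires controlling the residues uniformly and ruling out unexpected cancellations, and it is here that the explicit structure of the iterated Mellin–Barnes representation does the real work.
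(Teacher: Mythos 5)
Your overall strategy --- Mellin--Barnes representations, contour shifting, and tracking which Lerch factors can produce poles --- is the same technique the paper uses; the paper simply applies \eqref{a1} once per step and inducts on $r$ (expressing $\zeta_r$ through $\zeta_{r-1}$, residue terms $Y(k)$, a polar term $X$, and a shifted integral) instead of unfolding the full iterated integral, which is a presentational rather than substantive difference. The continuation itself and case (i) are fine in your sketch.

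The genuine gap is exactly the point you defer at the end: deciding which candidate hyperplanes are \emph{true} singularities, which is the entire content of parts (ii) and (iii). The paper needs two specific inputs here, neither of which appears in your proposal. First, a non-vanishing statement for the Lerch values at non-positive integers (Lemma \ref{lmm-a2}): the residue terms carry factors $\phi(-k,\xi_r)$, and since $\phi(-k,\pm 1)$ is a multiple of $\zeta(-k)$ it vanishes for all even $k\geqslant 2$, so infinitely many of the naively expected hyperplanes are \emph{not} singular; without knowing precisely when $\phi(-k,\xi)\neq 0$ you can conclude neither the "infinitely many" in (iii) nor the uniqueness in (ii), and the true list genuinely depends on whether $\xi_r=1$, $\xi_r=-1$, or $\xi_r\neq\pm1$ (cases (II)--(IV) of Theorem \ref{thm2}). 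Second, a mechanism ruling out cancellation among the residue terms: the paper changes variables to $u=s_{r-1}+s_r$ (the Akiyama--Egami--Tanigawa idea) and observes that $X$ and the $Y(k)$ for distinct $k$ have different orders in $s_r$, so their singular parts cannot cancel; "controlling the residues uniformly" does not substitute for an actual argument of this kind. Finally, your proposed list of singular loci $s_j+s_{j+1}+\cdots+s_\ell=\ell-j+1-n$ with $j\leqslant\ell\leqslant r$ is not what comes out: the singular hyperplanes involve only the full tail sums $s_j+\cdots+s_r$ (plus $s_r=1$ when $\xi_r=1$), and the relevant constant is $C(j,r)-\ell$, where $C(j,r)$ counts the indices $h\geqslant j$ with $\xi_h=1$, not $r-j+1$ in general; hyperplanes with an intermediate endpoint $\ell<r$ do not occur. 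So as written the proposal establishes the meromorphic continuation and case (i), but not (ii) and (iii).
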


Actually the location of the singular hyperplanes will be more explicitly
described in Theorem \ref{thm2}. 

\begin{rem}\label{Rem-MPL}
The multiple polylogarithm is defined by
\begin{align}\label{MPL-01}
Li_{n_1,\ldots,n_r}(z_1,\ldots,z_r)&=\sum_{0<k_1<\cdots<k_r}\frac{z_1^{k_1}\cdots z_r^{k_r}}{k_1^{n_1}\cdots k_r^{n_r}}\\
&=\sum_{m_1=1}^{\infty}\cdots\sum_{m_r=1}^{\infty}\prod_{j=1}^r
(z_j\cdots z_r)^{m_j}(m_1+\cdots+m_j)^{-n_j},\notag
\end{align}
where $(n_j)\in \mathbb{N}^r$ and $(z_j)\in \mathbb{C}^r$ with $|z_j|= 1$ $(1\leqslant j\leqslant r)$ (see Goncharov \cite{Gon}). Inspired by this definition, we generally define 
\begin{equation}
Li_{s_1,\ldots,s_r}(z_1,\ldots,z_r)=\zeta_r((s_j);(\prod_{\nu=j}^{r}z_\nu);(1)) \label{MPL-02}
\end{equation}
for $(s_j)\in \mathbb{C}^r$ 
and $(z_j)\in \mathbb{C}^r$ with $|z_j|= 1$ $(1\leqslant j\leqslant r)$ 
(see \eqref{Def-EZL-zeta}). In fact, it follows from Theorem \ref{thm1} that the right-hand side of \eqref{MPL-02} can be meromorphically continued to $(s_j)\in \mathbb{C}^r$. Moreover, when $\prod_{\nu=j}^{r}z_\nu\ne 1$ for all $j$, the right-hand side is entire. 
In particular, setting $\xi_j=\prod_{\nu=j}^{r}z_\nu$ $(1\leqslant j\leqslant r)$ and $\xi_{r+1}=1$, we obtain
\begin{equation}
\zeta_r((n_j);(\xi_j);(1))=Li_{n_1,\ldots,n_r}\left(\frac{\xi_1}{\xi_2},\frac{\xi_2}{\xi_3},\ldots ,\frac{\xi_r}{\xi_{r+1}}\right) \label{MPL-03}
\end{equation}
for all $(n_j)\in \mathbb{Z}^r$ when $\xi_j\ne 1$ $(1\leqslant j\leqslant r)$.
In Section \ref{pMLF=pMPL}, we will show a $p$-adic version of \eqref{MPL-03} (see Theorem \ref{L-Li theorem-2} and Remark \ref{Rem-fin}).

\end{rem}

Now we start the proof of Theorem \ref{thm1}.
Let $C(j,r)$ be the number of $h$ ($j\leqslant  h\leqslant  r$) 
for which $\xi_h=1$ holds.
We first prove the following lemma.

\begin{lmm}\label{lmm-a}
The function $\zeta_r((s_j);(\xi_j);(\gamma_j))$ can be continued meromorphically
to the whole space $\mathbb{C}^r$, and its possible singularities can be listed as
follows, where $\ell\in\mathbb{N}_0:=\mathbb{N}\cup\{0\}$.   

$\bullet$ If $\xi_j=1$, then $s_j+s_{j+1}+\cdots+s_r=C(j,r)-\ell$ $\quad$
{\rm(}$1\leqslant  j\leqslant  r-1${\rm)},

$\bullet$ If $\xi_r=1$, then $s_r=1$,

$\bullet$ If $\xi_j\neq 1$ for all $j$ {\rm(}$1\leqslant  j\leqslant  r${\rm)}, then
$\zeta_r((s_j);(\xi_j);(\gamma_j))$ is entire.
\end{lmm}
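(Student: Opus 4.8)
The plan is to prove Lemma~\ref{lmm-a} by induction on $r$, using the Mellin--Barnes integral representation to peel off one variable at a time. First I would treat the base case $r=1$: the series $\zeta_1(s_1;\xi_1;\gamma_1)=\gamma_1^{-s_1}\sum_{m_1=1}^\infty \xi_1^{m_1}m_1^{-s_1}=\gamma_1^{-s_1}\phi(s_1,\xi_1)$ is, as recalled in the introduction, entire when $\xi_1\neq 1$ and otherwise equals $\gamma_1^{-s_1}\zeta(s_1)$, which has only the simple pole at $s_1=1$. This matches the three bullet points with $C(1,1)$ either $0$ or $1$.

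For the inductive step I would separate the innermost sum. Write the denominator of the $m_r$-block as $(m_1\gamma_1+\cdots+m_{r-1}\gamma_{r-1}+m_r\gamma_r)^{-s_r}$ and apply the Mellin--Barnes formula
\begin{equation}
(A+B)^{-s}=\frac{1}{2\pi i}\int_{(c)}\frac{\Gamma(s+z)\Gamma(-z)}{\Gamma(s)}A^{-s-z}B^{z}\,dz \label{eq:MB-plan}
\end{equation}
(valid for $\Re A,\Re B>0$, $-\Re s<c<0$) with $A=m_1\gamma_1+\cdots+m_{r-1}\gamma_{r-1}$ and $B=m_r\gamma_r$. This factors the $m_r$-summation out as $\gamma_r^{z}\sum_{m_r\geqslant 1}\xi_r^{m_r}m_r^{z}=\gamma_r^{z}\phi(-z,\xi_r)$, while the remaining $(r-1)$-fold sum over $m_1,\dots,m_{r-1}$ with exponents $s_1,\dots,s_{r-2},s_{r-1}+s_r+z$ is exactly $\zeta_{r-1}((s_1,\dots,s_{r-2},s_{r-1}+s_r+z);(\xi_1,\dots,\xi_{r-1});(\gamma_1,\dots,\gamma_{r-1}))$, to which the induction hypothesis applies. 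So one gets
\begin{equation}
\zeta_r=\frac{1}{2\pi i}\int_{(c)}\frac{\Gamma(s_r+z)\Gamma(-z)}{\Gamma(s_r)}\,\gamma_r^{z}\,\phi(-z,\xi_r)\,\zeta_{r-1}\bigl((s_1,\dots,s_{r-2},s_{r-1}+s_r+z);\dots\bigr)\,dz. \label{eq:MB-iter-plan}
\end{equation}
Meromorphic continuation then comes from shifting the contour to the right past the poles of the integrand; the poles in $z$ of $\Gamma(-z)$ at $z=\ell\in\mathbb{N}_0$, of $\phi(-z,\xi_r)$ at $z=-1$ when $\xi_r=1$, and of $\zeta_{r-1}$ (located by the induction hypothesis) produce, via the residues, the candidate singular hyperplanes. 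One has to check the usual Stirling-type decay of the integrand on vertical lines so that the shifted integral converges and is holomorphic, which is standard but needs the convexity bound for $\phi(-z,\xi_r)$ in vertical strips.

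The bookkeeping of which hyperplanes arise is the heart of the matter. When $\xi_r\neq 1$, $\phi(-z,\xi_r)$ is entire, so the only poles picked up are $z=\ell$ (from $\Gamma(-z)$) and the images of the poles of $\zeta_{r-1}$; substituting $z=\ell$ into a hypothetical singular hyperplane $s_j+\cdots+s_{r-2}+(s_{r-1}+s_r+z)=C(j,r-1)-\ell'$ of $\zeta_{r-1}$ gives $s_j+\cdots+s_r=C(j,r-1)-\ell'-\ell=C(j,r)-\ell''$, using $C(j,r)=C(j,r-1)$ since $\xi_r\neq 1$; and the bullet ``$s_r=1$'' cannot appear because it required $\xi_r=1$. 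When $\xi_r=1$, the extra pole at $z=-1$ of $\phi(-z,\xi_r)$ contributes: combined with $\zeta_{r-1}$'s hyperplane it shifts $C(j,r-1)$ to $C(j,r-1)+1=C(j,r)$, and on its own (paired with the regular value of $\zeta_{r-1}$) it yields $s_r=1$; also $C(j,r)=C(j,r-1)+1$ updates the other families correctly. I expect the main obstacle to be precisely this careful tracking of the index shifts and of the constants $C(j,r)$ through the residue computation --- making sure no spurious hyperplane is introduced and that the arithmetic progressions $C(j,r)-\mathbb{N}_0$ are reproduced exactly --- together with verifying the vertical decay estimates uniformly enough to justify the contour shifts on all of $\mathbb{C}^r$.
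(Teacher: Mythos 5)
Your proposal follows essentially the same route as the paper's proof: induction on $r$, the Mellin--Barnes splitting of the innermost variable leading to the representation \eqref{a3} with the factor $\gamma_r^z\phi(-z,\xi_r)$, a rightward contour shift collecting residues at $z=\ell\in\mathbb{N}_0$ and at $z=-1$ when $\xi_r=1$, and the $C(j,r)$ bookkeeping via $C(j,r)=C(j,r-1)+\delta(r)$. The one point the paper makes explicit and you should too is that the initial abscissa must satisfy $c<-1$ (not merely $-\Re s_r<c<0$), both to justify interchanging the multiple sum with the integral (so that the $m_r$-sum really produces $\phi(-z,\xi_r)$) and to guarantee that the pole at $z=-1$ is actually crossed in the shift; otherwise the term $X$, hence the hyperplane $s_r=1$ and the shift from $C(j,r-1)$ to $C(j,r)$, would be missed.
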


\begin{proof}
We prove the theorem by induction on $r$.   In the case $r=1$, our zeta-function
is essentially the Lerch zeta-function \eqref{Def-Lerch}, so the assertion of the
lemma is classical.
 
Now let $r\geqslant  2$, and assume that the assertion of the lemma is true for $r-1$.
The proof is based on the Mellin-Barnes integral formula
\begin{align}\label{a1}
(1+\lambda)^{-s}=\frac{1}{2\pi i}\int_{(c)}\frac{\Gamma(s+z)\Gamma(-z)}{\Gamma(s)}
\lambda^z dz,
\end{align}
where $s,\lambda\in\mathbb{C}$, $\Re s>0$, $|\arg\lambda|<\pi$, $\lambda\neq 0$,
$-\Re s<c<0$ and the path of integration is the vertical line $\Re z=c$.
This formula has been frequently used to show the meromorphic continuation of
various multiple zeta-functions (e.g. \cite{Mat02}, \cite{Mat03a}, \cite{Mat03b}).
In particular, the following argument is quite similar to that in \cite{Mat03b}. In what follows, $\varepsilon$ denotes an arbitrarily small positive number, not necessarily the same at each occurrence.

First of all, using \cite[Theorem 3]{Mat02}, we see that series
\eqref{Def-EZL-zeta} is absolutely convergent in the region
\begin{align}\label{a2}
\{(s_1,\ldots,s_r)\;|\;\sigma_{r-j+1}+\cdots+\sigma_r>j\;(1\leqslant  j\leqslant  r)\},
\end{align}
where $\sigma_j=\Re s_j$ ($1\leqslant j\leqslant r$).
At first we assume that $(s_1,\ldots,s_r)$ is in this region.   Divide
\begin{align*}
\lefteqn{(m_1\gamma_1+\cdots+m_r\gamma_r)^{-s_r}}\\
&=(m_1\gamma_1+\cdots+m_{r-1}\gamma_{r-1})^{-s_r}
\left(1+\frac{m_r\gamma_r}{m_1\gamma_1+\cdots+m_{r-1}\gamma_{r-1}}\right)
^{-s_r},
\end{align*}
and apply \eqref{a1} to the second factor on the right-hand side with
$\lambda=m_r\gamma_r/(m_1\gamma_1+\cdots+m_{r-1}\gamma_{r-1})$ to obtain
\begin{align}\label{a3}
\lefteqn{\zeta_r((s_j);(\xi_j);(\gamma_j))}\\
&=\frac{1}{2\pi i}\int_{(c)}\frac{\Gamma(s_r+z)\Gamma(-z)}{\Gamma(s_r)}
\sum_{m_1=1}^{\infty}\cdots\sum_{m_r=1}^{\infty}\frac{\xi_1^{m_1}}{(m_1\gamma_1)^{s_1}}
\times
\cdots\times\frac{\xi_{r-1}^{m_{r-1}}}{(m_1\gamma_1+\cdots+m_{r-1}\gamma_{r-1})
^{s_{r-1}}}\notag\\
&\qquad\times\frac{\xi_{r}^{m_{r}}}{(m_1\gamma_1+\cdots+m_{r-1}\gamma_{r-1})^{s_r}}
\left(\frac{m_r\gamma_r}{m_1\gamma_1+\cdots+m_{r-1}\gamma_{r-1}}\right)^z dz\notag\\
&=\frac{1}{2\pi i}\int_{(c)}\frac{\Gamma(s_r+z)\Gamma(-z)}{\Gamma(s_r)}
\zeta_{r-1}((s_1,\ldots,s_{r-2},s_{r-1}+s_r+z);(\xi_1,\ldots,\xi_{r-1});
(\gamma_1,\ldots,\gamma_{r-1}))\notag\\
&\qquad\times\gamma_r^z\phi(-z,\xi_r)dz,\notag
\end{align}
where $-\sigma_r<c<-1$.   (To apply \eqref{a1} it is enough to assume $c<0$, but
to ensure the convergence of the above multiple series it is necessary to assume
$c<-1$.)

Next we shift the path of integration from $\Re z=c$ to $\Re z=M-\varepsilon$,
where $M$ is a large positive integer, and $\varepsilon$ is a small positive number.    This is possible because, by virtue of
Stirling's formula, we see that the integrand is of rapid decay when $\Im z\to\infty$.
Relevant poles are $z=0,1,2,\ldots$
(coming from $\Gamma(-z)$) and $z=-1$ if $\xi_r=1$ (coming from $\phi(-z,\xi_r)$).
Counting the residues of those poles, we obtain
\begin{align}\label{a4}
\ \lefteqn{\zeta_r((s_j);(\xi_j);(\gamma_j))}\\
&=\delta(r)\frac{\gamma_r^{-1}}{s_r-1}                                             
\zeta_{r-1}((s_1,\ldots,s_{r-2},s_{r-1}+s_r-1);                                         
(\xi_1,\ldots,\xi_{r-1});(\gamma_1,\ldots,\gamma_{r-1}))\notag\\
&+\sum_{k=0}^{M-1}\binom{-s_r}{k}\zeta_{r-1}((s_1,\ldots,s_{r-2},s_{r-1}+s_r+k);
(\xi_1,\ldots,\xi_{r-1});(\gamma_1,\ldots,\gamma_{r-1}))\notag\\
&\qquad\times\gamma_r^k \phi(-k,\xi_r)\notag\\
&+\frac{1}{2\pi i}\int_{(M-\varepsilon)}\frac{\Gamma(s_r+z)\Gamma(-z)}{\Gamma(s_r)}\notag\\
& \qquad \times \zeta_{r-1}((s_1,\ldots,s_{r-2},s_{r-1}+s_r+z);(\xi_1,\ldots,\xi_{r-1});                
(\gamma_1,\ldots,\gamma_{r-1}))\notag\\%\gamma_r^z\phi(-z,\xi_r)dz \notag\\                                                 
&\qquad\times\gamma_r^z\phi(-z,\xi_r)dz \notag\\
&=X+\sum_{k=0}^{M-1}Y(k)+Z,\notag
\end{align}
say, where 
\begin{equation}\label{a-delta} 
  \delta(r)=                                                                            
  \begin{cases}                                                                         
    1\qquad&(\xi_r=1), \\                                                                
    0\qquad&(\xi_r\neq1).                                                               
  \end{cases}                                                                           
\end{equation}
From \eqref{a2} we see that 
$$
\zeta_{r-1}((s_1,\ldots,s_{r-2},s_{r-1}+s_r+z);(\xi_1,\ldots,\xi_{r-1});               
(\gamma_1,\ldots,\gamma_{r-1}))
$$
is absolutely convergent if
$$
\sigma_{r-j}+\cdots+\sigma_r+\Re z>j\qquad(1\leqslant  j\leqslant  r-1),
$$
so the integral $Z$ is convergent
(and hence holomorphic) in the region
\begin{align}\label{a5}
\{(s_1,\ldots,s_r)\;|\;
\sigma_{r-j}+\cdots+\sigma_r>j-M+\varepsilon\quad(0\leqslant  j\leqslant  r-1)\}.
\end{align}
(Here, the condition corresponding to $j=0$ is necessary to assure that
the factor $\Gamma(s_r+z)$ in the integrand does not encounter the poles.)
Therefore by \eqref{a4} and the assumption of induction we can continue 
$\zeta_r((s_j);(\xi_j);(\gamma_j))$ meromorphically to region \eqref{a5}. 
Since $M$ is arbitrary, we can now conclude that
$\zeta_r((s_j);(\xi_j);(\gamma_j))$ can be continued meromorphically to the whole
space $\mathbb{C}^r$.
 
Next we examine the possible singularities on the right-hand side of \eqref{a4}.
By the assumption of induction, we see that the possible singularities of $Y(k)$ 
are
\begin{align}\label{a6}
s_j+\cdots+s_{r-2}+s_{r-1}+s_r+k=C(j,r-1)-\ell
\quad{\rm if}\quad\xi_j=1\quad(1\leqslant  j\leqslant  r-2)
\end{align}
and
\begin{align}\label{a7}
s_{r-1}+s_r+k=1\quad{\rm if}\quad\xi_{r-1}=1.
\end{align}
If $\xi_j\neq 1$ for all $j$ ($1\leqslant  j\leqslant  r-1$), then $Y(k)$ is entire.
The term $X$ appears only in case $\xi_r=1$, and in this case, $s_r=1$ is 
a possible singularity.   Moreover, by the assumption of induction we find the
following possible singularities of $X$:
\begin{align}\label{a8}                                                                
s_j+\cdots+s_{r-2}+s_{r-1}+s_r-1=C(j,r-1)-\ell                                          
\quad{\rm if}\quad\xi_j=1\quad(1\leqslant  j\leqslant  r-2,\ j=r)                                         
\end{align}
and
\begin{align}\label{a9}                                                                 
s_{r-1}+s_r-1=1\quad{\rm if}\quad\xi_{r-1}=1\ \text{and}\ \xi_r=1.                                           
\end{align}
If $\xi_j\neq 1$ for all $j$ ($1\leqslant  j\leqslant  r-1$), then $X$ is entire.
Since $k$ also runs over $\mathbb{N}_0$, renaming $k+\ell$ in \eqref{a6} and
$k$ in \eqref{a7} as $\ell$, we find that 
the above list of possible singularities can be rewritten as follows 
(where $\ell\in\mathbb{N}_0$).

$\bullet$ $s_j+\cdots+s_r=(C(j,r-1)+1)-\ell$ and $s_r=1$ if $\xi_j=1$ 
($1\leqslant  j\leqslant  r-2$) and $\xi_r=1$,

$\bullet$ $s_{r-1}+s_r=2-\ell$ and $s_r=1$ if $\xi_{r-1}=1$ and $\xi_r=1$ (given by \eqref{a7} and \eqref{a9}),

$\bullet$ $s_j+\cdots+s_r=C(j,r-1)-\ell$ if $\xi_j=1$ 
($1\leqslant  j\leqslant  r-2$) and $\xi_r\neq 1$, 

$\bullet$ $s_{r-1}+s_r=1-\ell$ if $\xi_{r-1}=1$ and $\xi_r\neq 1$.

\noindent
Since $C(j,r)=C(j,r-1)+1$ when $\xi_r=1$ and $C(j,r)=C(j,r-1)$ when  $\xi_r\neq 1$,
the factors $C(j,r-1)+1$ and $C(j,r-1)$ in the above list are all equal to
$C(j,r)$.
This completes the proof of the lemma, because we also notice that
$C(r-1,r)=2$ if $\xi_{r-1}=\xi_r=1$ and $C(r-1,r)=1$ if $\xi_{r-1}=1$ and 
$\xi_r\neq 1$.    
\end{proof}

Next we discuss whether the possible singularities listed in Lemma \ref{lmm-a} are 
indeed singularities, or not.    For this purpose, we first prepare the following

\begin{lmm}\label{lmm-a2}
Let $\xi\in\mathbb{C}$ with $|\xi|=1$.
If $\xi\neq\pm 1$, then $\phi(-k,\xi)\neq0$ for all $k\in\mathbb{N}_0$.
If $\xi=\pm 1$, then $\phi(-k,\xi)\neq0$ for all odd $k\in\mathbb{N}$ and $k=0$,
and $\phi(-k,\xi)=0$ for all even $k\in\mathbb{N}$.
\end{lmm}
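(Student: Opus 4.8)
The plan is to reduce the statement to the classical theory of the Lerch zeta-function and the Hurwitz zeta-function, using the identity $\phi(-k,\xi) = \sum_{m=1}^\infty \xi^m m^k$ in the sense of analytic continuation, together with known closed formulas for these negative-integer values. First I would recall that for $\xi \neq 1$ with $|\xi|=1$, the Lerch zeta-function $\phi(s,\xi)$ is entire and its values at nonpositive integers are given, up to elementary factors, by the Apostol-Bernoulli polynomials (or equivalently by the Eulerian-type polynomials attached to $\xi$); for $\xi = 1$ one has instead $\phi(-k,1) = \zeta(-k) = -B_{k+1}/(k+1)$.

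The case $\xi = 1$ is immediate: $\phi(0,1) = \zeta(0) = -1/2 \neq 0$, $\phi(-k,1) = -B_{k+1}/(k+1)$ which is nonzero for odd $k$ (since $B_{k+1}$ with $k+1$ even and $k+1 \geq 2$ is nonzero) and is zero for even $k \geq 2$ (since $B_{k+1} = B_{\text{odd} \geq 3} = 0$). The case $\xi = -1$ reduces to the Dirichlet eta-function: $\phi(-k,-1) = -\eta(-k) = (2^{k+1}-1)\zeta(-k)\cdot(\text{sign})$, more precisely $\phi(-k,-1)$ is a nonzero rational multiple of $\zeta(-k)$ when $k = 0$ or $k$ odd (these are the values where $\zeta(-k)\neq 0$) and vanishes for even $k \geq 2$ because it inherits the trivial zeros of $\zeta$; the relevant clean formula is $\phi(-k,-1) = (2^{k+1}-1)\,\zeta(-k)/\text{(nonzero constant)}$ up to sign, or directly via $\phi(-k,-1) = -\tfrac{1}{2}\left(2^{-k}-1\right)$-type expressions in terms of $B_{k+1}$. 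Either way, for $k=0$ one computes $\phi(0,-1) = -1/2 \neq 0$ directly, and for $k \geq 1$ the value is a fixed nonzero rational times $\zeta(-k)$, so it is nonzero exactly for odd $k$ and vanishes for even $k$.

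The main case is $\xi \neq \pm 1$, $|\xi|=1$, where I claim $\phi(-k,\xi) \neq 0$ for \emph{all} $k \in \mathbb{N}_0$. Here I would use the generating-function identity
\begin{align*}
\sum_{k=0}^\infty \phi(-k,\xi)\,\frac{t^k}{k!} \;=\; \sum_{m=1}^\infty \xi^m e^{mt} \;=\; \frac{\xi e^t}{1-\xi e^t},
\end{align*}
valid as an identity of formal power series / for $t$ in a neighborhood of $0$ avoiding the poles; this follows by analytic continuation from the Lerch zeta-function in the standard way (differentiating the defining Dirichlet series and specializing, or via the Mellin transform / Hankel contour representation). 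Thus $\phi(-k,\xi) = k!\,[t^k]\,\dfrac{\xi e^t}{1 - \xi e^t}$. Since $|\xi| = 1$ and $\xi \neq 1$, the function $\dfrac{\xi e^t}{1-\xi e^t}$ is meromorphic in $t$ with its pole nearest the origin located at $t_0 = -\log\xi$ (the branch with smallest modulus), and $|t_0| > 0$ because $\xi \neq 1$. The key point is that this pole is \emph{simple}, so near $t = t_0$ we have $\dfrac{\xi e^t}{1-\xi e^t} = \dfrac{-1}{t - t_0} + (\text{holomorphic})$, and therefore the Taylor coefficients at $0$ satisfy $[t^k]\,\dfrac{\xi e^t}{1-\xi e^t} = t_0^{-(k+1)} + O(\rho^{-k})$ for any $\rho$ with $|t_0| < \rho <$ (modulus of the next pole). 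In particular $[t^k] \neq 0$ for all sufficiently large $k$, which gives $\phi(-k,\xi)\neq 0$ for large $k$; the finitely many remaining small $k$ must then be handled separately.

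The hard part, then, is ruling out vanishing for the \emph{small} values of $k$ in the case $\xi \neq \pm 1$ — the asymptotic argument only handles large $k$. To deal with this cleanly, rather than an asymptotic estimate I would instead argue directly: suppose $\phi(-k,\xi) = 0$ for some $k \geq 1$. Writing $\xi = e^{2\pi i \theta}$ with $\theta \in (0,1) \setminus \{1/2\}$, one has the classical formula $\phi(-k,\xi) = -\dfrac{k!}{(2\pi i)^{k+1}}\left(\zeta(k+1,\theta) + (-1)^{k+1}\zeta(k+1,1-\theta)\right)$ (expressing the Lerch value as a combination of Hurwitz zeta-functions via the functional equation / Lerch's formula), or more elementarily the Apostol identity $\phi(-k,\xi) = -\dfrac{\beta_{k+1}(\xi)}{k+1}$ where $\beta_{k+1}(\xi)$ is the Apostol-Bernoulli number defined by $\dfrac{t}{\xi e^t - 1} = \sum_{n\geq 0}\beta_n(\xi)\frac{t^n}{n!}$ (valid for $\xi \neq 1$). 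Using the known fact that the Apostol-Bernoulli numbers $\beta_n(\xi)$ for $\xi$ a primitive root of unity other than $1$, and more generally for $|\xi|=1$, $\xi \neq 1$, are nonzero — which can be proved by the same pole-order argument applied to $t/(\xi e^t - 1)$, whose only pole near $0$ is the simple pole at $t = -\log\xi$ (the apparent pole at $t=0$ is removable since $\xi \neq 1$) — one concludes $\phi(-k,\xi) \neq 0$ for all $k \in \mathbb{N}_0$, including $k = 0$ where one checks directly $\phi(0,\xi) = \dfrac{\xi}{1-\xi} \neq 0$. I would present this last route (generating function $\dfrac{\xi e^t}{1-\xi e^t}$, observe the unique simple pole nearest $0$, hence all Taylor coefficients nonzero by a contour-integral/Cauchy-estimate argument that in fact works for every $k$, not just large $k$, once one notes there is no cancellation from a single simple pole) as the cleanest self-contained argument, and relegate the $\xi = \pm 1$ cases to the classical trivial-zero description of $\zeta$ and $\eta$.
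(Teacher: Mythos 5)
Your treatment of $\xi=\pm 1$ and of $k=0$ is fine and matches the paper, and the generating-function setup $\sum_k \phi(-k,\xi)t^k/k!=\xi e^t/(1-\xi e^t)$ (the constant-term discrepancy with $1/(1-\xi e^t)$ is immaterial) is exactly the paper's starting point. The gap is in the main case $\xi\neq\pm1$: your entire nonvanishing argument rests on the claim that because the pole of $\xi e^t/(1-\xi e^t)$ nearest the origin is unique and simple, ``there is no cancellation'' and hence \emph{every} Taylor coefficient is nonzero. That is not a proof. The nearest-pole expansion only gives $[t^k]f=t_0^{-(k+1)}+O(\rho^{-k})$, an asymptotic statement; for small $k$ the combined contribution of the remaining poles can a priori cancel the main term, and nothing you say excludes this. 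The case $\xi=-1$ shows cancellation is a real phenomenon (there the two nearest poles cancel exactly for even $k$), and for $\theta$ close to $1/2$ (writing $\xi=e^{2\pi i\theta}$) the second-nearest pole at $2\pi i(1-\theta)$ is nearly as close as the nearest one at $-2\pi i\theta$, so the threshold beyond which the asymptotic argument works blows up as $\theta\to 1/2$; the small-$k$ cases cannot be waved away. Your fallback, quoting as ``known'' that the Apostol--Bernoulli numbers $\beta_n(\xi)$ are nonzero for all $n$ when $|\xi|=1$, $\xi\neq1$, is circular: for $k\geqslant 1$ that statement is equivalent to the lemma, and the justification you offer for it is again the same large-$k$ pole argument.

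What is actually needed (and what the paper does) is to keep \emph{all} the poles: computing $\phi(-k,\xi)/k!$ as a contour integral and summing residues at every pole $t=2\pi i(n-\theta)$, $n\in\mathbb{Z}$, one gets $\phi(-k,\xi)/k!=-\sum_{n\in\mathbb{Z}}\bigl(2\pi i(n-\theta)\bigr)^{-(k+1)}$ for $k\geqslant 1$, so the problem becomes the nonvanishing of the real series $\sum_{n\in\mathbb{Z}}(n-\theta)^{-(k+1)}$. This is then settled by a sign argument valid for every $k$: for odd $k$ the exponent $k+1$ is even and all terms are positive; for even $k$ one pairs the terms as $\sum_{n\geqslant 0}\bigl((n+1-\theta)^{-(k+1)}-(n+\theta)^{-(k+1)}\bigr)$ and each bracket has the same strict sign, determined by whether $\theta<1/2$ or $\theta>1/2$ (equivalently, via your own Hurwitz-zeta formula, $\zeta(k+1,1-\theta)\neq\zeta(k+1,\theta)$ by strict monotonicity in the second argument). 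You actually wrote down the Hurwitz-zeta expression that makes this possible, but you never carried out the nonvanishing argument for it; supplying that step — rather than the ``no cancellation from a single simple pole'' assertion — is what is required to close the proof.
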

\begin{proof}
If $\xi=\pm 1$, then we have
\begin{align*}
\phi(-k,1)&=\zeta(-k),  \\
\phi(-k,-1)&=(2^{1+k}-1)\zeta(-k),
\end{align*}
which reduces to the well-known cases.
In the following we assume that $\xi\neq\pm 1$.
Put $\xi=e^{2\pi i\theta}$ with $0<\theta<1$ and $\theta\neq1/2$.
It is known that
\begin{equation}\label{mosimosi}
  \frac{1}{1-\xi e^t}
=\sum_{k=0}^\infty \phi(-k,\xi)\frac{t^k}{k!}
\end{equation}
(cf. \cite[Section 1]{FKMT1}).
If $k=0$, then we have
\begin{equation*}
  \phi(0,\xi)=\frac{1}{1-\xi}\neq 0.
\end{equation*}
Assume $k\geqslant 1$.
For any sufficiently small $\varepsilon>0$, we have
  \begin{equation*}
    \begin{split}
      &\frac{\phi(-k,\xi)}{k!}
      =\frac{1}{2\pi i}\int_{|t|=\varepsilon}\frac{t^{-k-1}dt}{1-\xi e^t}
      \\
      &\quad=\frac{1}{2\pi i}\int_{|t|=\varepsilon}\frac{t^{-k-1}dt}{1-e^{t+2\pi i \theta}}
       =-\sum_{n\in\mathbb{Z}}\frac{1}{(2\pi i(n-\theta))^{k+1}},
    \end{split}
  \end{equation*}
where the last equality follows by counting residues at the poles 
$t=2\pi i(n-\theta)$.   Therefore it is sufficient to show that
\begin{equation}\label{a10}
  \sum_{n\in\mathbb{Z}}\frac{1}{(n-\theta)^{k+1}}\neq 0.
\end{equation}
If $k$ is odd, then the left-hand side is clearly positive.
If $k$ is even, then
\begin{equation*}
    \sum_{n\in\mathbb{Z}}\frac{1}{(n-\theta)^{k+1}}
=\sum_{n=0}^\infty
  \Biggl(\frac{1}{(n+1-\theta)^{k+1}}-
  \frac{1}{(n+\theta)^{k+1}}\Biggr)\neq 0,
\end{equation*}
because for all $n\geqslant 0$,
\begin{equation*}
  \frac{1}{(n+1-\theta)^{k+1}}-
  \frac{1}{(n+\theta)^{k+1}}
  \begin{cases}
    <0 \qquad & (0<\theta<1/2) \\
    >0 \qquad & (1/2<\theta<1).
  \end{cases}
\end{equation*}
The lemma is proved.
\end{proof}

Now our aim is to prove the following theorem, from which Theorem \ref{thm1}
immediately follows.

\begin{thm}\label{thm2}
Among the list of possible singularities of $\zeta_r((s_j);(\xi_j);(\gamma_j))$
given in Lemma \ref{lmm-a}, the ``true'' singularities are listed up as follows,
where $\ell\in\mathbb{N}_0$.

${\rm (I)}$ If $\xi_j=1$, then $s_j+\cdots+s_r=C(j,r)-\ell$ $\qquad$ 
{\rm(}$1\leqslant  j\leqslant  r-2${\rm)},

${\rm (II)}$ If $\xi_{r-1}=1$ and $\xi_r=1$, then 
$s_{r-1}+s_r=2,1,-2\ell$,

${\rm (III)}$ If $\xi_{r-1}=1$ and $\xi_r=-1$, then
$s_{r-1}+s_r=1,-2\ell$,

${\rm (IV)}$ If $\xi_{r-1}=1$ and $\xi_r\neq\pm 1$, then
$s_{r-1}+s_r=1-\ell$,

${\rm (V)}$ If $\xi_r=1$, then $s_r=1$.
\end{thm}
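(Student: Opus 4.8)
The plan is to refine the case analysis already carried out for Lemma \ref{lmm-a} by tracking which of the ``possible'' poles actually carry a nonzero residue. The starting point is the recursive formula \eqref{a4}, which writes $\zeta_r$ as $X + \sum_{k=0}^{M-1} Y(k) + Z$. The term $Z$ is holomorphic in the region \eqref{a5} and, since $M$ is arbitrary, contributes nothing to the true singularities. So everything reduces to analyzing $X$ (present only when $\xi_r=1$) and the $Y(k)$, using the inductive determination of the singularities of the lower-rank zeta-functions appearing inside them together with the vanishing information for $\phi(-k,\xi_r)$ supplied by Lemma \ref{lmm-a2}.

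First I would set up the induction on $r$, the base case $r=1$ being the classical Lerch zeta-function: $\phi(s_1,\xi_1)$ is entire unless $\xi_1=1$, in which case $s_1=1$ is the only pole — this is exactly case (V) with $r=1$ and the other cases are vacuous. For the inductive step I would treat the terms $Y(k) = \binom{-s_r}{k}\gamma_r^k\,\phi(-k,\xi_r)\,\zeta_{r-1}((s_1,\dots,s_{r-2},s_{r-1}+s_r+k);(\xi_1,\dots,\xi_{r-1});(\gamma_1,\dots,\gamma_{r-1}))$. The binomial factor $\binom{-s_r}{k}$ is entire, so $Y(k)$ is identically zero exactly when $\phi(-k,\xi_r)=0$, and otherwise its singularities are those of the inner $\zeta_{r-1}$, pulled back under $s_{r-1}\mapsto s_{r-1}+s_r+k$. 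By Lemma \ref{lmm-a2}: if $\xi_r\neq\pm1$ then every $Y(k)$ survives and $k$ ranges over all of $\mathbb{N}_0$; if $\xi_r=1$ or $\xi_r=-1$ then only $k=0$ and odd $k$ survive. Feeding in the inductive description of the singular hyperplanes of $\zeta_{r-1}$ (cases (I)--(V) at level $r-1$) and shifting the argument, I get, for each $j$ with $\xi_j=1$, $1\le j\le r-2$, the hyperplanes $s_j+\cdots+s_r = C(j,r-1)-\ell$ (these become $C(j,r)-\ell$ when $\xi_r=1$, matching (I)), and for $j=r-1$ with $\xi_{r-1}=1$ the hyperplanes $s_{r-1}+s_r+k=1$, i.e.\ $s_{r-1}+s_r = 1-k$ with $k$ restricted according to the parity dictated by $\xi_r$. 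This is where the dichotomy between (II)/(III) and (IV) arises: if $\xi_r\neq\pm1$ then $k$ is unrestricted and we get all of $s_{r-1}+s_r=1-\ell$; if $\xi_r=\pm1$ then only $k=0$ (giving $s_{r-1}+s_r=1$) and odd $k$ (giving $s_{r-1}+s_r=-2\ell$) survive.

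Next I would handle the term $X = \delta(r)\,\dfrac{\gamma_r^{-1}}{s_r-1}\,\zeta_{r-1}((s_1,\dots,s_{r-2},s_{r-1}+s_r-1);(\xi_1,\dots,\xi_{r-1});(\gamma_1,\dots,\gamma_{r-1}))$, present only when $\xi_r=1$. Its singularities are $s_r=1$ (giving (V)) together with the pullback under $s_{r-1}\mapsto s_{r-1}+s_r-1$ of the singularities of $\zeta_{r-1}$: for $j\le r-2$ these reproduce the hyperplanes already found, and for $j=r-1$ the case $\xi_{r-1}=1$ contributes $s_{r-1}+s_r-1=1$, i.e.\ $s_{r-1}+s_r=2$ — this is the extra ``$2$'' in (II). Assembling all contributions from $X$ and the $Y(k)$ and simplifying via $C(j,r)=C(j,r-1)+[\xi_r=1]$, I recover precisely the list (I)--(V); the identity $C(r-1,r)=2$ when $\xi_{r-1}=\xi_r=1$ accounts for the top hyperplane $s_{r-1}+s_r=2$ being within the allowed range $C(r-1,r)-\ell$. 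I should also note that when $\xi_j\neq1$ for all $j\le r-1$, both $X$ and every $Y(k)$ have entire inner $\zeta_{r-1}$, so the only possible pole left is $s_r=1$ from $X$, and it is vacuous unless $\xi_r=1$ — consistent with Theorem \ref{thm1}(i),(ii).

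The genuine content — and the main obstacle — is not producing the list of hyperplanes but proving that each listed hyperplane is a \emph{true} pole, i.e.\ that the residues do not cancel among the infinitely many $Y(k)$, the term $X$, and (after shifting $M$) the contributions hidden in $Z$. For this I would argue as follows: fix a candidate hyperplane $H$, specialize the variables to a generic point of $H$, and show that along a transversal line the leading Laurent coefficient is nonzero. The key points are (a) the residue of $X$ at $s_r=1$ is a nonzero multiple of a $\zeta_{r-1}$, hence nonzero as a meromorphic function; (b) on a hyperplane of the form $s_{r-1}+s_r = c$ coming from a \emph{single} value $k=k_0$, the residue is $\binom{-s_r}{k_0}\gamma_r^{k_0}\phi(-k_0,\xi_r)$ times the (nonzero, by induction) residue of the inner $\zeta_{r-1}$, and the factor $\phi(-k_0,\xi_r)$ is nonzero precisely by Lemma \ref{lmm-a2}; (c) distinct candidate hyperplanes are genuinely distinct (their equations differ), so residues attached to different hyperplanes cannot cancel each other — the only possible cancellation is \emph{within} a single hyperplane, and there at most finitely many $Y(k)$ plus possibly $X$ contribute, so one checks the finitely many terms explicitly, observing that the inner residues are $\zeta_{r-2}$-type functions which are not identically zero and whose arguments are shifted by distinct constants, hence linearly independent as meromorphic functions. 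Carrying out (c) carefully — especially disentangling the overlap, when $\xi_{r-1}=\xi_r=1$, between the hyperplane $s_{r-1}+s_r=1$ (from $X$ and from $Y(0)$) and verifying the residues add rather than cancel — is the technical heart of the argument; the parity bookkeeping from Lemma \ref{lmm-a2} is what guarantees the surviving terms are exactly those listed.
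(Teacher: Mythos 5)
Your overall architecture (induction on $r$, the Mellin--Barnes recursion \eqref{a4} with $X$, $Y(k)$, $Z$, the role of Lemma \ref{lmm-a2} in deciding which $Y(k)$ survive, and the assembly of the list (I)--(V) via $C(j,r)=C(j,r-1)+1$ when $\xi_r=1$) matches the paper. The gap is in the one step that carries the real content: ruling out cancellation among the several terms that can be singular along the \emph{same} hyperplane. The paper's mechanism is the change of variables $(s_1,\dots,s_{r-2},u,s_r)$ with $u=s_{r-1}+s_r$ (the Akiyama--Egami--Tanigawa trick): after this substitution the inner $\zeta_{r-1}$ factors in $X$ and in every $Y(k)$ depend only on $s_1,\dots,s_{r-2},u$, so along a fixed candidate hyperplane the total residue is a finite sum of the form $\frac{\gamma_r^{-1}}{s_r-1}R_X+\sum_k\binom{-s_r}{k}\gamma_r^k\phi(-k,\xi_r)R^{(k)}$ in which the $R$'s are free of $s_r$ and the coefficient functions have pairwise different order in $s_r$; hence the sum vanishes identically only if each term does, and one is reduced exactly to ``true singularity of $\zeta_{r-1}$'' plus ``$\phi(-k,\xi_r)\neq 0$''. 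Your substitute for this — that the inner residues are ``$\zeta_{r-2}$-type functions whose arguments are shifted by distinct constants, hence linearly independent as meromorphic functions'' — is not a valid general principle (shifted functions need not be independent), is not what these residues actually are (they are residues along \emph{different} singular hyperplanes of $\zeta_{r-1}$, not translates of one function), and, crucially, is not supplied by your induction hypothesis, which only tells you that each residue is not identically zero. Likewise ``one checks the finitely many terms explicitly'' is not feasible: for a hyperplane $s_j+\cdots+s_r=C(j,r)-\ell$ with $j\leqslant r-2$ and $\ell$ large, the contributions come from $X$ and from $k=0,\dots,\ell-1$, and you have no explicit residue formulas to check against. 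So as written the no-cancellation step fails; inserting the $u$-substitution and the comparison of degrees in $s_r$ repairs it.

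A smaller inaccuracy: when $\xi_{r-1}=\xi_r=1$ there is no overlap at $s_{r-1}+s_r=1$ between $X$ and $Y(0)$; the term $X$ contributes to $s_{r-1}+s_r=2$ (via $s_{r-1}+s_r-1=1$), while $s_{r-1}+s_r=1$ comes only from $Y(0)$ and $s_{r-1}+s_r=-2\ell$ only from the odd $k$. This does not break your argument, but it shows the bookkeeping of which terms meet which hyperplane needs to be redone carefully once the correct non-cancellation mechanism is in place.
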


\begin{rem}
When $\xi_j=\gamma_j=1$ ($1\leqslant  j\leqslant  r$), this theorem recovers 
%Akiyama, Egami and Tanigawa 
\cite[Theorem 1]{AET01}.
\end{rem}

\begin{proof}
The proof is by induction on $r$.   The case $r=1$ is obvious, so we assume
$r\geqslant  2$ and the theorem is true for $r-1$.

First we put $s_{r-1}+s_r=u$, and regard \eqref{a4} as a formula in variables
$s_1,\ldots,s_{r-2},u,s_r$.   This idea of ``changing variables'' is originally
due to Akiyama, Egami and Tanigawa \cite{AET01}.   We have
\begin{align*}
&X=\delta(r)\frac{\gamma_r^{-1}}{s_r-1}                                             
\zeta_{r-1}((s_1,\ldots,s_{r-2},u-1);                                        
(\xi_1,\ldots,\xi_{r-1});(\gamma_1,\ldots,\gamma_{r-1})),\\                        
&Y(k)=\binom{-s_r}{k}\zeta_{r-1}((s_1,\ldots,s_{r-2},u+k);        
(\xi_1,\ldots,\xi_{r-1});(\gamma_1,\ldots,\gamma_{r-1}))
\gamma_r^k \phi(-k,\xi_r).
\end{align*}
Consider $Y(k)$.   The singularities \eqref{a6} and \eqref{a7} are coming from 
the $\zeta_{r-1}$ factor.   These singularities do not be canceled by the factor
$\binom{-s_r}{k}$, because the $\zeta_{r-1}$ factor (after the above
``changing variables'') does not include the variable $s_r$.
Also, if $k^{\prime}\neq k$, then the singularities of $Y(k^{\prime})$ and of
$Y(k)$ do not cancel with each other, because $Y(k^{\prime})$ and $Y(k)$ is of
different order with respect to $s_r$.

When $\xi_r=1$, the term $X$ appears.   The possible singularities coming from $X$ 
are \eqref{a8}, \eqref{a9}, and $s_r=1$.   These singularities do not cancel with
each other.   Also, these singularities do not cancel
the singularities coming from $Y(k)$, which can be seen again by observing the order
with respect to $s_r$.

Therefore now we can say:

(i) The possible singularities of $Y(k)$ are ``true'' if they are ``true''
singularities of $\zeta_{r-1}$ and $\phi(-k,\xi_r)\neq 0$,

(ii) When $\xi_r=1$, the hyperplane $s_r=1$ is a ``true'' singularity, while the
other possible singularities of $X$ are ``true'' if they are ``true'' singularities of
$\zeta_{r-1}$.

Consider (i).  By the assumption of induction, the ``true'' singularities of
$$
\zeta_{r-1}((s_1,\ldots,s_{r-2},s_{r-1}+s_r+k);                                
(\xi_1,\ldots,\xi_{r-1});(\gamma_1,\ldots,\gamma_{r-1}))
$$
are

(i-1) $s_j+\cdots+s_r+k=C(j,r-1)-\ell$ if $\xi_j=1$ ($1\leqslant  j\leqslant  r-3$),

(i-2) $s_{r-2}+s_{r-1}+s_r+k=2,1,-2\ell$ if $\xi_{r-2}=1$, $\xi_{r-1}=1$,

(i-3) $s_{r-2}+s_{r-1}+s_r+k=1,-2\ell$ if $\xi_{r-2}=1$, $\xi_{r-1}=-1$,

(i-4) $s_{r-2}+s_{r-1}+s_r+k=1-\ell$ if $\xi_{r-2}=1$, $\xi_{r-1}\neq\pm 1$,

(i-5) $s_{r-1}+s_r+k=1$ if $\xi_{r-1}=1$.

\noindent
Here, by Lemma \ref{lmm-a2} we see that $k\in\mathbb{N}_0$ if $\xi_r\neq\pm 1$, 
while $k$ is
$0$ or odd positive integer if $\xi_r=\pm 1$.
Renaming $k+\ell$ in (i-1) as $\ell$, we can rewrite (i-1) as

(i-1') $s_j+\cdots+s_r=C(j,r-1)-\ell$ if $\xi_j=1$ ($1\leqslant  j\leqslant  r-3$).

\noindent
Next, the equality in (i-2) is $s_{r-2}+s_{r-1}+s_r=2-k,1-k,-2\ell-k$, and the 
right-hand side
exhausts all integers $\leqslant  2$ even in the case when $k$ is $0$ or odd positive 
integer.   Therefore (i-2) can be rewritten as

(i-2') $s_{r-2}+s_{r-1}+s_r=2-\ell$ if $\xi_{r-2}=1$, $\xi_{r-1}=1$.

\noindent
Similarly we rewrite (i-3) and (i-4) as

(i-3') $s_{r-2}+s_{r-1}+s_r=1-\ell$ if $\xi_{r-2}=1$, $\xi_{r-1}=-1$,

(i-4') $s_{r-2}+s_{r-1}+s_r=1-\ell$ if $\xi_{r-2}=1$, $\xi_{r-1}\neq\pm 1$.

\noindent
These (i-1')--(i-4') and (i-5) give the list of ``true'' singularities coming from 
the case (i).

Next consider (ii).   By the assumption of induction, the ``true'' singularities
of
$$
\delta(r)\frac{1}{s_r-1}                                               
\zeta_{r-1}((s_1,\ldots,s_{r-2},s_{r-1}+s_r-1);
(\xi_1,\ldots,\xi_{r-1});(\gamma_1,\ldots,\gamma_{r-1}))
$$
are

(ii-1) $s_j+\cdots+s_r-1=C(j,r-1)-\ell$ if $\xi_j=1$ ($1\leqslant  j\leqslant  r-3$), $\xi_r=1$,

(ii-2) $s_{r-2}+s_{r-1}+s_r-1=2,1,-2\ell$ if $\xi_{r-2}=1$, $\xi_{r-1}=1$, $\xi_r=1$,

(ii-3) $s_{r-2}+s_{r-1}+s_r-1=1,-2\ell$ if $\xi_{r-2}=1$, $\xi_{r-1}=-1$, $\xi_r=1$,

(ii-4) $s_{r-2}+s_{r-1}+s_r-1=1-\ell$ if $\xi_{r-2}=1$, $\xi_{r-1}\neq\pm 1$,
$\xi_r=1$,

(ii-5) $s_{r-1}+s_r-1=1$ if $\xi_{r-1}=1$, $\xi_r=1$,

\noindent and

(ii-6) $s_r=1$ if $\xi_r=1$.

\noindent 
The last (ii-6) is singularity (V) in the statement of Theorem \ref{thm2}.

From (i-1'), (ii-1) and the definition of $C(j,r)$ we obtain
$s_j+\cdots+s_r=C(j,r)-\ell$ if $\xi_j=1$ ($1\leqslant  j\leqslant  r-3$).   This gives
singularity (I) for $1\leqslant  j\leqslant  r-3$.

Consider the case $j=r-2$.   From (i-2') and (ii-2) we find that 
$s_{r-2}+s_{r-1}+s_r=3-\ell$ are singularities if $\xi_{r-2}=1$, $\xi_{r-1}=1$, 
$\xi_r=1$.
From (i-3'), (i-4'), (ii-3) and (ii-4) we find that
$s_{r-2}+s_{r-1}+s_r=2-\ell$ are singularities if $\xi_{r-2}=1$, $\xi_{r-1}\neq 1$,
$\xi_r=1$.
These observations and (i-2')--(i-4') imply that
$s_{r-2}+s_{r-1}+s_r=C(r-2,r)-\ell$ are singularities if $\xi_{r-2}=1$.
This is singularity (I) for $j=r-2$.

Finally, from (i-5) we obtain the singularities $s_{r-1}+s_r=1-\ell$ if
$\xi_{r-1}=1$, $\xi_r\neq\pm 1$, and $s_{r-1}+s_r=1,-2\ell$ if
$\xi_{r-1}=1$, $\xi_r=\pm 1$.   The former case gives singularity (IV).
The latter case, combined with (ii-5), gives singularities (II) and (III).
This completes the proof of the theorem.

\end{proof}

\begin{rem}
%An important fact to prove this theorem is 
In the above proof, an important fact 
is that there are infinitely many $k\in \mathbb{N}$ with $\phi(-k,\xi)\not=0$. Actually, Lemma \ref{lmm-a2} ensures this fact. We can give another approach to ensure this fact. The number defined by
\begin{align}\label{kameyo}
H_k(\xi^{-1}):=(1-\xi)\phi(-k,\xi)\qquad (k\in \mathbb{N}_{0})
\end{align}
is called the $k$th Frobenius-Euler number studied by Frobenius in \cite{Fro}. 
He showed that, if $\xi$ is the primitive $c$th root of unity with $c>1$ and $p$ is an odd prime number with $p\nmid c$, then
$$H_k(\xi^{-1})\equiv \frac{1}{\xi^{-1}-1}\qquad (\text{mod\ }p)$$
for any $k\in \mathbb{N}_0$ with $k\equiv 1$ (mod $p-1$). Thus there are infinitely many $k\in \mathbb{N}$ with 
$$\phi(-k,\xi)=\frac{1}{1-\xi}H_k(\xi^{-1})\neq 0. $$
\end{rem}

\begin{rem}
It is desirable to generalize the results proved in this section to more general
multiple zeta-functions defined by \eqref{eq:L_series}, but it seems not easy,
because the argument based on Mellin-Barnes integrals will become more
complicated (see \cite{Mat03c}). 
\end{rem}

%%%%%%%%%%%%%%%%%%%%%%%%%%%%%%%%%%%%%%%%%%%%%%%%%%%%%%%%%%%%%%%%%%%%%%%%%%%%%
%%%%%%%%%%%%%%%%%%%%%%%%%%%%%%%%%%%%%%%%%%%%%%%%%%%%%%%%%%%%%%%%%%%%%%%%%%%%%
%%%%%%%%%%%%%%%%%%%%%%%%%%%%%%%%%%%%%%%%%%%%%%%%%%%%%%%%%%%%%%%%%%%%%%%%%%%%%

\let\bs\boldsymbol
%%%
%%%
%%%
\section{Desingularization of Multiple zeta-functions}\label{sec-3}
%%%%%%%%%%%%%%%%%%%%%%%%%%%%%%%%%%%%%%%%%%%%%%%%%%%%%%%%%%%%%%%%%%%%%%%%%%%%%
In this section,
we define desingularization of 
multiple zeta-functions of generalized Hurwitz-Lerch type \eqref{eq:L_series}, 
which includes those of generalized Euler-Zagier-Lerch type \eqref{Def-EZL-zeta}. 

\if0
For parameters $\xi_k,\gamma_{jk},\beta_j\in \mathbb{C}$ ($1\leqslant  j\leqslant  d,1\leqslant  k\leqslant  r$) with $|\xi_k|\leqslant  1$ and $\Re \gamma_{jk},\Re \beta_j >0$, and variables $s_j\in\mathbb{C}$ ($1\leqslant  j\leqslant  d$),
we define the \textbf{multiple zeta-functions of generalized Hurwitz-Lerch type} of depth $d$ and of rank $r$ by
\begin{multline}
  \label{eq:L_series}
  \zeta_r((s_j);(\xi_k);(\gamma_{jk});(\beta_j)) \\
  =
  \sum_{m_1=0}^\infty\cdots\sum_{m_r=0}^\infty
  \frac{\xi_1^{m_1}\cdots \xi_r^{m_r}}
  {(\beta_1+\gamma_{11}m_1+\cdots+\gamma_{1r}m_r)^{s_1}\cdots (\beta_d+\gamma_{d1}m_1+\cdots+\gamma_{dr}m_r)^{s_d}}
  \\
  =
%  \sum_{\substack{m_k\geqslant 0\\1\leqslant  k\leqslant  r}}
  \sum_{m_1=0}^\infty\cdots\sum_{m_r=0}^\infty
  \frac{\prod_{k=1}^r \xi_k^{m_k}}
  {\prod_{j=1}^d(\beta_j+\sum_{k=1}^r \gamma_{jk}m_k)^{s_j}}.
\end{multline}
\fi
Combining
the integral representation of gamma function
\begin{equation}
  \Gamma(s)=a^s\int_0^\infty e^{-ax}x^{s-1} dx
\end{equation}
for $a\in\mathbb{C}$ with $\Re a>0$, and
\begin{equation}
\label{eq:eyxi}
  \frac{1}{e^y-\xi}=\sum_{n=0}^\infty \xi^{n} e^{-(n+1)y}
\end{equation}
for $|\xi|\leqslant  1$ and $y>0$, 
the multiple zeta-function of generalized Hurwitz-Lerch type defined by \eqref{eq:L_series} is rewritten in the integral form as 
\begin{multline}
  \label{eq:int_rep1}
%  \zeta(\bs{\eta},\bs{a},\bs{b},\bs{s}) =
  \zeta_r((s_j);(\xi_k);(\gamma_{jk});(\beta_j))=
  \frac{1}{\Gamma(s_1)\cdots\Gamma(s_d)}
  \\
  \times
  \int_{[0,\infty)^d} 
  \frac{e^{(\gamma_{11}+\cdots+\gamma_{1r}-\beta_1)x_1}\cdots
    e^{(\gamma_{d1}+\cdots+\gamma_{dr}-\beta_d)x_d}x_1^{s_1-1}\cdots x_d^{s_d-1}}
  {(e^{x_1\gamma_{11}+\dots+x_d\gamma_{d1}}-\xi_1)\cdots(e^{x_1\gamma_{1r}+\dots+x_d\gamma_{dr}}-\xi_r)}
  dx_1\cdots dx_d
  \\
  =
  \frac{1}{\prod_{j=1}^d\Gamma(s_j)}
  \int_{[0,\infty)^d}
  \prod_{j=1}^d x_j^{s_j-1}\exp\Bigl((\sum_{k=1}^r \gamma_{jk} -\beta_j)x_j\Bigr)dx_j
  \prod_{k=1}^r\frac{1}{\exp\Bigl(\sum_{j=1}^d \gamma_{jk} x_j\Bigr)-\xi_k}.
\end{multline}
If $\xi_k\neq 1$ for all $k$, then, as was shown in \cite{Kom10}, it can be analytically continued to the whole space in $(s_j)$ as an entire function via the integral representation:
\begin{multline}
\label{eq:int_rep2}
  \zeta_r((s_j);(\xi_k);(\gamma_{jk});(\beta_j))=
  \frac{1}{\prod_{j=1}^d(e^{2\pi i s_j}-1)\Gamma(s_j)}
  \\
  \times
  \int_{\mathcal{C}^d}
  \prod_{j=1}^d x_j^{s_j-1}\exp\Bigl((\sum_{k=1}^r \gamma_{jk} -\beta_j)x_j\Bigr)dx_j
  \prod_{k=1}^r\frac{1}{\exp\Bigl(\sum_{j=1}^d \gamma_{jk} x_j\Bigr)-\xi_k},
\end{multline}
where $\mathcal{C}$ is the Hankel contour, that is, the path consisting of the positive real axis (top side), a circle around the origin of radius $\varepsilon$ (sufficiently small), and the positive real axis (bottom side).
The replacement of $[0,\infty)^d$ by the contour $\mathcal{C}^d$ can be checked directly (for the details, see \cite{Kom10}, where, more generally, the cases $\xi_j=1$ for some $j$ are treated).

Motivated as in \cite{FKMT1}, we introduce the notion of desingularization.

\begin{dfn} \label{def-MZF-2}
Let $\xi_k,\gamma_{jk},\beta_j\in \mathbb{C}$ with $|\xi_k|\leqslant  1$, 
$\Re \gamma_{jk}\geqslant  0$, $\Re \beta_j >0$, 
and for each $j$, at least one of $\Re\gamma_{jk}>0$.
Define the \textbf{desingularized multiple zeta-function},
which we also call the \textbf{desingularization of} 
$\zeta_r((s_j);(\xi_k);(\gamma_{jk});(\beta_j))$, by
  \begin{equation}\label{zeta-des-def-2-1}
    \begin{split}
      &\zeta_r^{\rm des}((s_j);(\xi_k);(\gamma_{jk});(\beta_j))
      \\
      &:=  
      \lim_{c\to1}
      \frac{1}{\prod_{k=1}^r(1-\delta(k)c)}
      \\
      &\times
      \frac{1}{\prod_{j=1}^d(e^{2\pi i s_j}-1)\Gamma(s_j)}
      \int_{\mathcal{C}^d}
      \prod_{j=1}^d x_j^{s_j-1}\exp\Bigl((\sum_{k=1}^r \gamma_{jk} -\beta_j)x_j\Bigr)dx_j
      \\
      &\times\prod_{k=1}^r\Biggl(\frac{1}{\exp\Bigl(\sum_{j=1}^d \gamma_{jk} x_j\Bigr)-\xi_k}
      -\delta(k)
      \frac{c}{\exp\Bigl(c\sum_{j=1}^d \gamma_{jk} x_j\Bigr)-1}
      \Biggr)
  \end{split}
\end{equation}
for $(s_j)\in \mathbb{C}^r$,
where the limit is taken for $c\in\mathbb{R}$ and
$\delta(k)$ is as in \eqref{a-delta}.
\end{dfn}

\begin{rem}\label{rem-entire}
If $\xi_k\neq 1$ for all $k$, then 
$\zeta_r((s_j);(\xi_k);(\gamma_{jk});(\beta_j))$ is already entire as we
mentioned above, so there is no need of desingularization.
In fact, since in this case $\delta(k)=0$ for all $k$, 
\eqref{zeta-des-def-2-1} coincides with \eqref{eq:int_rep2}.
\end{rem}

For $c\in\mathbb{R}$,
$y,\xi\in\mathbb{C}$, $\delta\in\{0,1\}$
with
$\delta=1$ if $\xi=1$, and $\delta=0$ otherwise,
let
\begin{equation*}                                                                       
  F_{c,\delta}(y,\xi)                                                                   
  =                                                                                     
  \begin{cases}                                                                         
    \displaystyle                                                                       
    \frac{1}{1-\delta c}\Bigl(\frac{1}{e^y-\xi}-\delta\frac{c}{(e^{cy}-1)}\Bigr)
\qquad&(c\neq 1), \\[4mm]  
    \displaystyle                                                                       
    \frac{1}{e^y-\xi}-\delta\frac{ye^y}{(e^y-1)^2}\qquad&(c=1),                         
  \end{cases}                                                                           
\end{equation*}
and further we write $F_{\delta}(y,\xi)=F_{1,\delta}(y,\xi)$.

%where $\mathcal{C}$ is the Hankel contour.
 % used in \eqref{Cont}. Note that \eqref{Cont-2-1} is well-defined because the convergence of the contour integral and of the limit with respect to $c\to 1$ can be justified from Theorem \ref{T-c-1-zeta} (see below).

\begin{thm}
\label{thm:ann}
  For $\xi_k,\gamma_{jk},\beta_j\in \mathbb{C}$ as in Definition \ref{def-MZF-2},
we have
  \begin{equation}\label{des-contour-2-2}
    \begin{split}
      &\zeta_r^{\rm des}((s_j);(\xi_k);(\gamma_{jk});(\beta_j))
      \\
      &=  
      \frac{1}{\prod_{j=1}^d(e^{2\pi i s_j}-1)\Gamma(s_j)}
      \int_{\mathcal{C}^d}
      \prod_{j=1}^d x_j^{s_j-1}\exp\Bigl((\sum_{k=1}^r \gamma_{jk} -\beta_j)x_j\Bigr)dx_j
      \\
      &\times\prod_{k=1}^r
      F_{\delta(k)}\left(\sum_{j=1}^d \gamma_{jk}x_j, \xi_k\right),
%      \frac{1}{(1-\delta_kc)}
%\Biggl(\frac{1}{\exp\Bigl(\sum_{j=1}^d \gamma_{jk} x_j\Bigr)-\xi_k}
%      -\delta_k
%      \frac{c}{\exp\Bigl(c\sum_{j=1}^d \gamma_{jk} x_j\Bigr)-1}
%      \Biggr),
  \end{split}
\end{equation}
and is analytically continued to $\mathbb{C}^r$ as an entire function in $(s_j)$.
%where $\mathcal{C}$ is the Hankel contour.
 % used in \eqref{Cont}. Note that \eqref{Cont-2-1} is well-defined because the convergence of the contour integral and of the limit with respect to $c\to 1$ can be justified from Theorem \ref{T-c-1-zeta} (see below).
\end{thm}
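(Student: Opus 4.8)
The plan is to prove the two assertions of Theorem \ref{thm:ann} separately: first the integral formula \eqref{des-contour-2-2}, then the claim that the right-hand side is entire in $(s_j)$. For the integral formula, the key observation is that the $c$-dependent factors in \eqref{zeta-des-def-2-1} are precisely the functions $F_{c,\delta(k)}(\sum_j\gamma_{jk}x_j,\xi_k)$: indeed, pulling the normalizing constant $\prod_k(1-\delta(k)c)^{-1}$ inside the product over $k$ distributes one factor $(1-\delta(k)c)^{-1}$ to each term, turning $\bigl(\exp(\sum_j\gamma_{jk}x_j)-\xi_k\bigr)^{-1}-\delta(k)c\bigl(\exp(c\sum_j\gamma_{jk}x_j)-1\bigr)^{-1}$ into exactly $F_{c,\delta(k)}(\sum_j\gamma_{jk}x_j,\xi_k)$. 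So \eqref{zeta-des-def-2-1} is the $c\to 1$ limit of the integral on the right of \eqref{des-contour-2-2} but with $F_{\delta(k)}$ replaced by $F_{c,\delta(k)}$. Thus it suffices to show that (a) $\lim_{c\to 1}F_{c,\delta}(y,\xi)=F_\delta(y,\xi)$ pointwise, and (b) this limit may be interchanged with the contour integral and the differential operator $\bigl(\prod_j(e^{2\pi is_j}-1)\Gamma(s_j)\bigr)^{-1}$.

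For step (a): when $\delta=0$ there is nothing to prove. When $\delta=1$ (so $\xi=1$), a direct computation with $F_{c,1}(y,1)=\frac{1}{1-c}\bigl(\frac{1}{e^y-1}-\frac{c}{e^{cy}-1}\bigr)$ shows, by differentiating $\frac{c}{e^{cy}-1}$ in $c$ at $c=1$ (or by a Taylor expansion in $c-1$), that the limit equals $\frac{1}{e^y-1}-\frac{ye^y}{(e^y-1)^2}$, which is $F_1(y,1)$. One should note that although each of $\frac{1}{e^y-1}$ and $\frac{c}{e^{cy}-1}$ has a pole at $y=0$, the difference is holomorphic there (the pole is cancelled), so $F_{c,1}(y,\xi)$ is holomorphic in a neighbourhood of the origin uniformly for $c$ near $1$; this is what makes the Hankel contour $\mathcal{C}^d$ legitimate in \eqref{zeta-des-def-2-1} in the first place. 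For step (b), the contour $\mathcal{C}^d$ can be taken to be a fixed compact-plus-rapidly-decaying path, and one needs a dominated-convergence argument: a bound on $F_{c,\delta}(\sum_j\gamma_{jk}x_j,\xi_k)$ that is uniform for $c$ in a neighbourhood of $1$ and integrable against $\prod_j|x_j^{s_j-1}\exp((\sum_k\gamma_{jk}-\beta_j)x_j)|$ along $\mathcal{C}^d$. The decay as $\Re x_j\to+\infty$ comes from the hypothesis $\Re\beta_j>0$ together with $\Re\gamma_{jk}\geqslant 0$ and the fact that for each $j$ some $\Re\gamma_{jk}>0$, so each factor $F_{c,\delta(k)}$ either decays exponentially (if $\xi_k\ne 1$, bounded) or at worst grows polynomially (if $\xi_k=1$, the $\frac{ye^y}{(e^y-1)^2}$-type term), which is dominated by the exponential decay $\exp(-\Re\beta_j x_j)$; near the origin everything is bounded as noted. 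This gives a uniform integrable majorant, so Lebesgue's theorem applies and \eqref{des-contour-2-2} follows.

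For the entireness claim: the expression in \eqref{des-contour-2-2} has the same shape as the Komori integral representation \eqref{eq:int_rep2} for the case of all $\xi_k\ne 1$, the only change being that the integrand factor $\prod_k(\exp(\sum_j\gamma_{jk}x_j)-\xi_k)^{-1}$ is replaced by $\prod_k F_{\delta(k)}(\sum_j\gamma_{jk}x_j,\xi_k)$. The crucial point is that $F_{\delta(k)}(y,\xi_k)$ is, in all cases, holomorphic at $y=0$: for $\xi_k\ne 1$ it equals $(e^y-\xi_k)^{-1}$ which is holomorphic there since $\xi_k\ne 1$; for $\xi_k=1$ the subtraction of $\frac{ye^y}{(e^y-1)^2}$ exactly cancels the double pole of $(e^y-1)^{-1}$... wait, $(e^y-1)^{-1}$ has only a simple pole, and $\frac{ye^y}{(e^y-1)^2}$ also has a simple pole, with matching residue, so the difference is holomorphic at $0$. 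Hence the whole integrand $\prod_j x_j^{s_j-1}\exp((\cdots)x_j)\prod_k F_{\delta(k)}(\cdots)$ has no singularity at the origin coming from the $F$-factors, so exactly as in \cite{Kom10} the Hankel-contour integral converges absolutely and locally uniformly in $(s_j)\in\mathbb{C}^d$ (the factor $\prod_j x_j^{s_j-1}$ contributes no obstruction on $\mathcal{C}$ for any $s_j$, and the prefactor $\prod_j\bigl((e^{2\pi is_j}-1)\Gamma(s_j)\bigr)^{-1}$ is itself entire in $(s_j)$ because the zeros of $(e^{2\pi is_j}-1)^{-1}$ at non-positive integers cancel the poles of $\Gamma(s_j)^{-1}$... more precisely $((e^{2\pi is_j}-1)\Gamma(s_j))^{-1}$ is entire), and defines an entire function. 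One then invokes the standard argument (Morera's theorem plus the absolute, locally uniform convergence just established) to conclude that the right-hand side of \eqref{des-contour-2-2} is holomorphic, hence entire, on $\mathbb{C}^d$.

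The main obstacle I expect is step (b), the justification of interchanging $\lim_{c\to 1}$ with the contour integral: one must produce an explicit majorant for $F_{c,\delta}(y,\xi)$ valid for $c$ in a full neighbourhood of $1$ (not just $c<1$), including control near $y=0$ where two individually-singular terms must be shown to combine into something bounded uniformly in $c$. This is most cleanly handled by writing $F_{c,1}(y,1)$ as a Cauchy-type contour integral in the $c$-variable, or by expanding $\frac{c}{e^{cy}-1}=\sum_{n\geqslant 0}\frac{B_n}{n!}c^n y^{n-1}$ (Bernoulli numbers) and checking term-by-term that the coefficient of $(c-1)^0$ is the claimed limit while the remainder is $O(c-1)$ uniformly on compacta and exponentially small at infinity; either way it is a routine but slightly delicate estimate rather than a conceptual difficulty. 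Everything else is a transcription of the Komori representation and a standard Morera argument.
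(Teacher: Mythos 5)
The first half of your argument --- rewriting the $c$-dependent integrand as $\prod_k F_{c,\delta(k)}\bigl(\sum_j\gamma_{jk}x_j,\xi_k\bigr)$, proving the pointwise limit $F_{c,\delta}\to F_{\delta}$, and interchanging $\lim_{c\to1}$ with the contour integral via a bound uniform in $c$ near $1$ --- is exactly the paper's route: these are Lemma \ref{lm:F} and Lemma \ref{lm:conv} (the latter reducing the delicate case $\delta=\xi=1$ to \cite[Lemma 3.6]{FKMT1}), so that part is in order, including your concern about uniformity near $y=0$ and for $c$ on both sides of $1$.

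The entireness part, however, has a genuine gap. The prefactor $\prod_j\bigl((e^{2\pi i s_j}-1)\Gamma(s_j)\bigr)^{-1}$ is \emph{not} entire: $1/(e^{2\pi i s}-1)$ has simple poles at \emph{all} integers, while the zeros of $1/\Gamma(s)$ sit only at $s=0,-1,-2,\dots$, so the product retains simple poles at $s_j=1,2,3,\dots$. Hence you cannot conclude entireness from ``entire contour integral times entire prefactor''; if that reasoning were valid it would equally prove that the ordinary, non-desingularized $\zeta_r$ with some $\xi_k=1$, which admits the same shape of representation as \eqref{eq:int_rep2}, is entire --- and it is not. The missing step is to show that the Hankel integral \emph{vanishes} whenever some $s_j$ is a positive integer: for $s_j=n\in\mathbb{N}$ the factor $x_j^{n-1}$ is single-valued, so the contributions of the two straight pieces of $\mathcal{C}$ cancel and only the small circle around $x_j=0$ survives, and that circle integral is zero by Cauchy's theorem precisely because the desingularized factors $F_{\delta(k)}$ are holomorphic at the origin (with $\varepsilon$ chosen small enough that no pole of $F_{\delta(k)}$ with $\xi_k\neq1$, $|\xi_k|=1$ enters the disc). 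This is where your ``crucial point'' actually has to be used; it makes the singularities of the quotient by $e^{2\pi i s_j}-1$ removable at $s_j\in\mathbb{N}$, and it is the content of the argument the paper imports from \cite[Theorem 3.4]{FKMT1}. (A minor slip elsewhere: for $\xi_k=1$ the term $ye^y/(e^y-1)^2$ decays like $ye^{-y}$ as $\Re y\to+\infty$ rather than growing polynomially, but that only strengthens your majorant.)
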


Theorem \ref{thm:ann} can be shown in almost the same way as in 
\cite[Theorem 3.4]{FKMT1}.
We first use Lemma \ref{lm:conv} below in place of \cite[Lemma 3.6]{FKMT1} to find that
the limit and the multiple integrals on the right-hand side of
\eqref{zeta-des-def-2-1} can be interchanged. Then we use the following
Lemma \ref{lm:F} to obtain the assertion of Theorem \ref{thm:ann}.

\begin{lmm}
\label{lm:F}
  \begin{equation*}
    F_{1,\delta}(y,\xi)=
    \lim_{c\to1} F_{c,\delta}(y,\xi).
  \end{equation*}
\end{lmm}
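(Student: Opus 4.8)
The plan is to prove Lemma \ref{lm:F} by a direct computation, treating the two cases $\delta=0$ and $\delta=1$ separately; the first is trivial and the second reduces to a single one-variable limit that can be evaluated either by l'Hôpital's rule or, more cleanly, by a Taylor expansion at $c=1$.

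\textbf{Case $\delta=0$.} When $\delta=0$ (equivalently $\xi\neq 1$), both $F_{c,0}(y,\xi)$ and $F_{1,0}(y,\xi)$ equal $\dfrac{1}{e^y-\xi}$ identically in $c$, so the limit relation $\lim_{c\to1}F_{c,0}(y,\xi)=F_{1,0}(y,\xi)$ holds trivially. This disposes of the case at once.

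\textbf{Case $\delta=1$.} Here $\xi=1$, and for $c\neq 1$ we must compute
\begin{equation*}
\lim_{c\to 1}\frac{1}{1-c}\Bigl(\frac{1}{e^y-1}-\frac{c}{e^{cy}-1}\Bigr),
\end{equation*}
and show it equals $\dfrac{1}{e^y-1}-\dfrac{ye^y}{(e^y-1)^2}$. First I would note that, since $F_{c,0}(y,\xi)=\dfrac{1}{e^y-\xi}$ is independent of $c$, it suffices to handle the $\delta=1$ part. Writing $g(c):=\dfrac{c}{e^{cy}-1}$, the quantity inside the limit is $\dfrac{g(1)-g(c)}{1-c}$ (after noting $\dfrac{1}{e^y-1}=g(1)$), which is precisely a difference quotient; hence the limit is $g'(1)$ by the definition of the derivative, provided $g$ is differentiable at $c=1$ (which it is for fixed $y$ in a neighborhood of the relevant domain, $e^y\neq 1$). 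Then a routine differentiation gives
\begin{equation*}
g'(c)=\frac{(e^{cy}-1)-c\cdot y e^{cy}}{(e^{cy}-1)^2},
\end{equation*}
and evaluating at $c=1$ yields $g'(1)=\dfrac{e^y-1-ye^y}{(e^y-1)^2}=\dfrac{1}{e^y-1}-\dfrac{ye^y}{(e^y-1)^2}$, which is exactly $F_{1,1}(y,\xi)$.

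I do not anticipate a genuine obstacle here: the statement is an elementary pointwise limit, and the only minor care needed is to restrict to $y$ (and its neighborhood, for the $c$-derivative) away from the zeros of $e^y-1$ and $e^{cy}-1$, which is automatic on the Hankel contour $\mathcal{C}$ with small radius. If one prefers to avoid invoking differentiability abstractly, the same result follows by substituting the Taylor expansion $e^{cy}-1 = cy + \tfrac{1}{2}(cy)^2 + \cdots$ into $g(c)$, expanding $g(c)$ in powers of $(c-1)$, and reading off the linear coefficient; this is bookkeeping rather than mathematics. The lemma is what allows, in the proof of Theorem \ref{thm:ann}, the passage $c\to 1$ inside the multiple contour integral (once Lemma \ref{lm:conv} justifies interchanging the limit with the integral), turning \eqref{zeta-des-def-2-1} into \eqref{des-contour-2-2}.
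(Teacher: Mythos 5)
Your proposal is correct and follows essentially the same route as the paper: the case $\delta=0$ is dismissed as trivial, and the case $\delta=\xi=1$ is settled by directly evaluating $\lim_{c\to1}\frac{1}{1-c}\bigl(\frac{1}{e^{y}-1}-\frac{c}{e^{cy}-1}\bigr)=\frac{e^y-1-ye^y}{(e^y-1)^2}$. The only difference is presentational: the paper simply states the value of this limit, whereas you make the computation explicit by recognizing it as the difference quotient of $g(c)=c/(e^{cy}-1)$ at $c=1$ and computing $g'(1)$, which is a clean way to carry out the same elementary calculation.
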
  
\begin{proof}
  If $\xi=\delta=1$, then 
\begin{equation}
  \begin{split}
    \lim_{c\to1}
    \frac{1}{1-\delta c}\Bigl(\frac{1}{e^y-\xi}-\delta\frac{c}{e^{cy}-1}\Bigr)
    &=\lim_{c\to1}
    \frac{1}{1-c}\Bigl(\frac{1}{e^{y}-1}-\frac{c}{e^{cy}-1}\Bigr)
    \\
    &=-\frac{1-e^{y}+ye^{y}}{(e^{y}-1)^2}
    \\
    &=\frac{1}{e^y-1}-\frac{ye^y}{(e^y-1)^2},
  \end{split}
\end{equation}
while if $\delta=0$ and $\xi\neq 1$, the assertion is obvious.
%\begin{equation}
%    \lim_{c\to1}
%    \frac{1}{1-\delta c}\Bigl(\frac{1}{e^y-\xi}-\delta\frac{c}{e^{cy}-1}\Bigr)
%    =\frac{1}{e^y-\xi}.
%\end{equation}
\end{proof}

Let $\mathcal{N}(\varepsilon)=\{z\in\mathbb{C}~|~|z|\leqslant  \varepsilon\}$ and 
$\mathcal{S}(\theta)=\{z\in\mathbb{C}~|~|\arg z|\leqslant  \theta\}$.
\begin{lmm} 
\label{lm:conv}
Let $0<\theta<\pi/2$.
Assume $|\xi|\leqslant 1$.
Then there exist $A>0$ and
sufficiently small $\varepsilon>0$ 
such that for all $c\in\mathbb{R}$
with sufficiently small $|1-c|$,
  \begin{equation}
    |F_{c,\delta}(y,\xi)|<A e^{-\Re y/2}
  \end{equation}
for any 
  $y\in \mathcal{N}(\varepsilon)\cup \mathcal{S}(\theta)$.
\end{lmm}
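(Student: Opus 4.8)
The estimate will be proved by the same method as \cite[Lemma 3.6]{FKMT1}, treating the two cases $\delta=0$ and $\delta=1$ separately. Fix once and for all a small $\eta_0\in(0,1/2)$; the bound will be established uniformly for $|1-c|\leqslant\eta_0$, the number $\varepsilon$ being shrunk as needed along the way and $A$ taken at the end as the largest of finitely many constants.

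\textbf{Case $\delta=0$} (so $\xi\neq1$, and $F_{c,0}(y,\xi)=1/(e^y-\xi)$ is independent of $c$). It suffices to bound $|e^y-\xi|$ below by a constant multiple of $e^{\Re y/2}$ on $\mathcal{N}(\varepsilon)\cup\mathcal{S}(\theta)$. For $\Re y\geqslant\log 2$ one has $|e^y-\xi|\geqslant e^{\Re y}-1\geqslant\tfrac12 e^{\Re y}\geqslant\tfrac12 e^{\Re y/2}$. The complementary set $(\mathcal{N}(\varepsilon)\cup\mathcal{S}(\theta))\cap\{\Re y\leqslant\log 2\}$ is compact, and $e^y-\xi$ has no zero on it once $\varepsilon$ is taken smaller than the distance from the origin to the zero set $\log\xi+2\pi i\mathbb{Z}$ of $e^y-\xi$: this set misses the origin since $\xi\neq1$, and misses $\mathcal{S}(\theta)\setminus\{0\}$ since it lies on a vertical line which is the imaginary axis when $|\xi|=1$ and has negative real part when $|\xi|<1$, while $\theta<\pi/2$. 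Hence $|e^y-\xi|$ is bounded below there by compactness, $e^{\Re y/2}$ is bounded above, and the claim follows.

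\textbf{Case $\delta=1$, $\xi=1$.} Put $g(w)=1/(e^w-1)$ and $h(t)=\dfrac{d}{dt}\bigl(t\,g(ty)\bigr)=g(ty)+ty\,g'(ty)=\dfrac{1}{e^{ty}-1}-\dfrac{ty\,e^{ty}}{(e^{ty}-1)^2}$. For $c\neq1$, integrating from $c$ to $1$ gives
\[
F_{c,1}(y,1)=\frac{1}{1-c}\bigl(g(y)-c\,g(cy)\bigr)=\frac{1}{1-c}\int_c^1 h(t)\,dt,
\]
so that $|F_{c,1}(y,1)|$ is at most the supremum of $|h(t)|$ over the interval between $c$ and $1$; together with $F_{1,1}(y,1)=h(1)$ this shows $|F_{c,1}(y,1)|\leqslant\sup_{t\in[1-\eta_0,1+\eta_0]}|h(t)|$ in all cases, and it remains to bound this supremum by $A\,e^{-\Re y/2}$. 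From $g(w)=w^{-1}-\tfrac12+\cdots$ one checks that the apparent pole of $h$ at $y=0$ cancels and $h(t)|_{y=0}=-\tfrac12$, so $h$ is jointly continuous in $(t,y)$ on $[1-\eta_0,1+\eta_0]\times(\mathcal{N}(\varepsilon)\cup\mathcal{S}(\theta))$ provided $\varepsilon$ is small enough that $|ty|<2\pi$ on $\mathcal{N}(\varepsilon)$ (keeping $ty$ off the poles $2\pi i\mathbb{Z}\setminus\{0\}$, which are avoided automatically on the sector since $\theta<\pi/2$). On the compact part $\{\Re y\leqslant R_0\}$ of the region, $|h|$ is bounded and $e^{\Re y/2}$ is bounded above, so the bound holds there. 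On $\{\Re y>R_0\}$, which lies in $\mathcal{S}(\theta)$, for $t\geqslant1-\eta_0>\tfrac12$ and $R_0$ large we have $|e^{ty}-1|\geqslant e^{t\Re y}-1\geqslant\tfrac12 e^{t\Re y}$, hence $|h(t)|\leqslant C(1+|y|)e^{-t\Re y}$; since $|y|\leqslant\Re y/\cos\theta$ in the sector,
\[
|h(t)|\leqslant C'(1+\Re y)e^{-(1-\eta_0)\Re y}=C'(1+\Re y)e^{-(1/2-\eta_0)\Re y}\cdot e^{-\Re y/2}\leqslant C''e^{-\Re y/2},
\]
the prefactor being bounded on $\Re y\geqslant0$ because $\eta_0<\tfrac12$. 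This finishes the case, and the lemma.

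\textbf{Main obstacle.} The delicate point is entirely in the case $\delta=1$: one needs the bound with the sharp rate $e^{-\Re y/2}$ simultaneously uniform in $c$ near $1$ and over the unbounded sector. The averaging identity $F_{c,1}(y,1)=(1-c)^{-1}\int_c^1 h(t)\,dt$ is what converts the uniformity in $c$ into a bound on $h(t)$ with $t$ in a small fixed neighbourhood of $1$, and the choice $\eta_0<\tfrac12$ is precisely what lets the polynomial factor $1+\Re y$ be absorbed by $e^{-(1/2-\eta_0)\Re y}$; one must also not overlook the cancellation that makes $h$ regular at $y=0$, which is needed for the compactness argument near the origin.
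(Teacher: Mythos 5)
Your proof is correct. For the case $\delta=0$ you argue essentially as the paper does: a compactness bound near the origin (using that the zero set $\log\xi+2\pi i\mathbb{Z}$ of $e^y-\xi$ misses the origin and, since $\theta<\pi/2$, the sector) combined with the trivial estimate $|e^y-\xi|\geqslant e^{\Re y}-1$ once $\Re y$ is bounded away from $0$; the paper even records the stronger rate $e^{-\Re y}$ on that range. The real difference is in the case $\delta=\xi=1$: there the paper gives no argument at all and simply reduces to Lemma 3.6 of \cite{FKMT1}, whereas you supply a self-contained proof. Your key device is the averaging identity
\begin{equation*}
F_{c,1}(y,1)=\frac{1}{1-c}\int_c^1\frac{d}{dt}\bigl(t\,g(ty)\bigr)\,dt,\qquad g(w)=\frac{1}{e^w-1},
\end{equation*}
which bounds $|F_{c,1}(y,1)|$ by $\sup_{|t-1|\leqslant\eta_0}|F_{1,1}(ty,1)|$ and thereby converts the required uniformity in $c$ into an estimate for the single, $c$-independent function $h(t)=F_{1,1}(ty,1)$ on a fixed $t$-interval; the removable singularity of $h$ at $y=0$, the inequality $|y|\leqslant\Re y/\cos\theta$ on the sector, and the choice $\eta_0<1/2$ (to absorb the factor $1+\Re y$ into $e^{-(1/2-\eta_0)\Re y}$) then close the argument, all of which you verify correctly. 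What the paper's route buys is brevity by citation to the earlier paper; what yours buys is an argument readable without \cite{FKMT1}, with the mechanism producing uniformity in $c$ made completely explicit.
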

\begin{proof}
(1) Assume $\delta=0$ and $\xi\neq 1$.
Then
there exist $\varepsilon,C>0$ such that for all $y\in \mathcal{N}(\varepsilon)$, 
  \begin{equation*}
    |F_{c,\delta}(y,\xi)|=
    \Bigl|\frac{1}{e^y-\xi}\Bigr|<C.
  \end{equation*}
Further for
  $y\in \mathcal{S}(\theta)\setminus\mathcal{N}(\varepsilon)$, we have
  \begin{equation*}
    |F_{c,\delta}(y,\xi)|\leqslant 
    \frac{1}{|e^{y}|-1}=\frac{e^{-\Re y}}{1-e^{-\Re y}}\leqslant  C'e^{-\Re y}.
  \end{equation*}

(2) Assume $\delta=\xi=1$. Then this case reduces to \cite[Lemma 3.6]{FKMT1}.
\end{proof}

It is to be noted that the following continuity properties hold.

\begin{thm}
The desingularization $\zeta_r^{\rm des}((s_j);(\xi_k);(\gamma_{jk});(\beta_j))$
is continuous in both $(s_j)$ and $(\xi_k)$.   In particular, if $\xi_k\neq 1$ 
for all $k$,
then $\zeta_r((s_j);(\xi_k);(\gamma_{jk});(\beta_j))$ 
is continuous in both $(s_j)$ and $(\xi_k)$.
\end{thm}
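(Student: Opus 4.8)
The plan is to derive both continuity assertions from the single contour representation \eqref{des-contour-2-2} of Theorem \ref{thm:ann}, which exhibits $\zeta_r^{\rm des}$ as an absolutely convergent integral over $\mathcal{C}^d$ whose integrand depends explicitly on $(s_j)$ and $(\xi_k)$; in each instance I would pass limits under the integral sign by dominated convergence, the majorant coming from Lemma \ref{lm:conv} and a mild strengthening of it. Continuity in $(s_j)$ is the easy half. For fixed $(\xi_k)$ and a fixed point $(x_j)\in\mathcal{C}^d$ the integrand is entire in $(s_j)$, the dependence entering only through $\prod_j x_j^{s_j-1}/\bigl[(e^{2\pi i s_j}-1)\Gamma(s_j)\bigr]$, whose product with the kernels is entire by Theorem \ref{thm:ann}. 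Since Lemma \ref{lm:conv} bounds each factor $F_{\delta(k)}\bigl(\sum_j\gamma_{jk}x_j,\xi_k\bigr)$ by $A\,e^{-\Re(\sum_j\gamma_{jk}x_j)/2}$, on every compact set of $(s_j)$ the integrand is dominated by one fixed integrable function on $\mathcal{C}^d$, and dominated convergence gives continuity (indeed holomorphy) in $(s_j)$.

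The substance lies in continuity in $(\xi_k)$, and it suffices to vary one coordinate $\xi_{k_0}$. When the limiting value $\xi_{k_0}^0\neq1$ the argument is routine: for $\varepsilon$ small the pole $y=\log\xi_{k_0}$ of $1/(e^y-\xi_{k_0})$ stays off $\mathcal{N}(\varepsilon)\cup\mathcal{S}(\theta)$ for all $\xi_{k_0}$ near $\xi_{k_0}^0$, so the kernel is jointly continuous in $(y,\xi_{k_0})$ on the contour, Lemma \ref{lm:conv} supplies a $\xi_{k_0}$-uniform majorant, and dominated convergence applies. On the open region where every $\xi_k\neq1$ one has $\zeta_r^{\rm des}=\zeta_r$ by Remark \ref{rem-entire}, so this case already yields the ``in particular'' clause.

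The main obstacle is continuity through $\xi_{k_0}=1$, where the relevant kernel is the desingularized one: at $\xi_{k_0}=1$ it is $F_1(y,1)$, and for $\xi_{k_0}$ near $1$ the natural comparison is with $F_1(y,\xi_{k_0})=\frac{1}{e^y-\xi_{k_0}}-\frac{ye^y}{(e^y-1)^2}$, in which the pole $y=\log\xi_{k_0}$ of the first term slides into the origin, at which the subtracted term is itself singular. The structural point that makes the limit behave is that the desingularizing term $\frac{ye^y}{(e^y-1)^2}$ carries no $\xi_{k_0}$; hence $\xi_{k_0}\mapsto F_1(y,\xi_{k_0})$ is continuous through $\xi_{k_0}=1$ for each fixed $y$ on the contour, since there $y$ is kept away from the confluence point $0$ — by $\varepsilon$ on the circle and by construction on the rays. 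Pointwise on $\mathcal{C}^d$ the integrand therefore converges to its value at $\xi_{k_0}=1$, and it remains to produce a $\xi_{k_0}$-uniform integrable majorant. For this I would prove an extension of Lemma \ref{lm:conv}: a bound $|F_1(y,\xi_{k_0})|<A\,e^{-\Re y/2}$ valid simultaneously for all $y\in\mathcal{N}(\varepsilon)\cup\mathcal{S}(\theta)$ and all $\xi_{k_0}$ in a neighborhood of $1$.

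The hard part is exactly this uniform estimate through the confluence $\log\xi_{k_0}\to0$. On $\mathcal{S}(\theta)\setminus\mathcal{N}(\varepsilon)$ and on the rays both terms of $F_1$ decay like $e^{-\Re y}$, so the bound is immediate; the delicate locus is the circle $|y|=\varepsilon$, where one must verify that keeping $|\log\xi_{k_0}|<\varepsilon/2$ holds the distance from the circle to the moving pole $\geq\varepsilon/2$, so that $1/(e^y-\xi_{k_0})$ stays bounded there uniformly in $\xi_{k_0}$, while $\frac{ye^y}{(e^y-1)^2}$ is bounded on $|y|=\varepsilon$ independently of $\xi_{k_0}$. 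The genuine content is to see that it is precisely the desingularizing subtraction — which cancels the origin pole present in $1/(e^y-1)$ — that keeps $F_1$ from blowing up on the contour as the pole reaches the origin, and to splice the circle and ray estimates into one majorant uniform in $\xi_{k_0}$. Once this strengthened form of Lemma \ref{lm:conv} is available, dominated convergence gives continuity at $\xi_{k_0}=1$; iterating over the coordinates yields continuity in $(\xi_k)$, and combined with the first step, joint continuity in $\bigl((s_j),(\xi_k)\bigr)$.
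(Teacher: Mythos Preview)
Your treatment of continuity in $(s_j)$, and of continuity in $(\xi_k)$ when the limiting value satisfies $\xi_{k_0}^0\neq 1$, matches the paper's argument: the contour representation \eqref{des-contour-2-2} together with the bound of Lemma~\ref{lm:conv} feeds dominated convergence. The paper's proof is exactly this one-line application; the second assertion is then read off via Remark~\ref{rem-entire}.

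The bulk of your proposal, however, is devoted to continuity of $\zeta_r^{\rm des}$ \emph{through} $\xi_{k_0}=1$, and here there is a genuine error. In the representation \eqref{des-contour-2-2} the kernel attached to the $k_0$-th factor is $F_{\delta(k_0)}\bigl(\sum_j\gamma_{jk_0}x_j,\xi_{k_0}\bigr)$, and $\delta(k_0)$ is determined by $\xi_{k_0}$: it equals $0$ whenever $\xi_{k_0}\neq 1$. Thus for $\xi_{k_0}\neq 1$ the kernel is $F_0(y,\xi_{k_0})=\dfrac{1}{e^y-\xi_{k_0}}$, \emph{not} the function $F_1(y,\xi_{k_0})=\dfrac{1}{e^y-\xi_{k_0}}-\dfrac{ye^y}{(e^y-1)^2}$ that you introduce. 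For fixed $y\neq 0$ on the contour one has $F_0(y,\xi_{k_0})\to \dfrac{1}{e^y-1}$ as $\xi_{k_0}\to 1$, whereas the kernel at $\xi_{k_0}=1$ is $F_1(y,1)=\dfrac{1}{e^y-1}-\dfrac{ye^y}{(e^y-1)^2}$. These differ by the nonzero term $\dfrac{ye^y}{(e^y-1)^2}$, so the integrand does \emph{not} converge pointwise to its value at $\xi_{k_0}=1$, and no dominated-convergence argument can yield the conclusion you are aiming for.

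In fact the stronger statement you are trying to prove is false. In the one-variable case $d=r=1$, $\gamma_{11}=\beta_1=1$, Theorem~\ref{thm:des} gives $\zeta_1^{\rm des}(s;1;1;1)=(1-s)\zeta(s)$, which at $s=1$ equals $-1$; but for $\xi\neq 1$ one has $\zeta_1^{\rm des}(s;\xi;1;1)=\zeta_1(s;\xi;1;1)$, and at $s=1$ this is $-\xi^{-1}\log(1-\xi)$, which diverges as $\xi\to 1$. So $\zeta_r^{\rm des}$ is not continuous across the locus $\xi_{k_0}=1$. The theorem (and its one-line proof via Lemma~\ref{lm:conv}) should be read as continuity in $(\xi_k)$ with the pattern $\{k:\xi_k=1\}$ held fixed; on the open stratum where all $\xi_k\neq 1$ this is precisely your ``routine'' case, and it already yields the ``in particular'' clause.
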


\begin{proof}
The first statement follows easily from Lemma \ref{lm:conv} by using the dominated
convergence theorem.   The second statement is just a special case of the first
statement in view of Remark \ref{rem-entire}.
\end{proof}

Next we give a generating function of special values of
    $\zeta_r((s_j);(\xi_k);(\gamma_{jk});(\beta_j))$
at non-positive integers. 
Write the Taylor expansion of $F_\delta(y,\xi)$ with respect to $y$ as
\begin{equation}\label{F-taylor}
    F_\delta(y,\xi)
    =\frac{1}{e^y-\xi}-\delta\frac{ye^y}{(e^y-1)^2}
    =\sum_{n=0}^\infty F_\delta^n(\xi)\frac{y^n}{n!}.
\end{equation}
Then
\begin{equation}\label{F-taylor-coeff}
  F_\delta^n(\xi)=
  \begin{cases}
      \displaystyle
      B_{n+1} \qquad &(\xi=1, \delta=1),\\[4mm]
      \displaystyle
      \dfrac{H_{n}(\xi)}{1-\xi} \qquad &(\xi\neq 1,\delta=0), 
  \end{cases}
\end{equation}
where $B_{n+1}$ denotes the $(n+1)$-th Bernoulli number.
The first formula of \eqref{F-taylor-coeff} can be shown by differentiating the
definition of Bernoulli numbers
\begin{equation}\label{Ber-def}
\frac{y}{e^y-1}=\sum_{n=0}^{\infty}B_n \frac{y^n}{n!},
\end{equation}
while the second formula follows from \eqref{mosimosi} and \eqref{kameyo}.

\begin{thm} Let $\lambda_1,\ldots,\lambda_d\in\mathbb{N}_0$. Then we have
\begin{multline} 
\label{eq:formula_spv}
  % Z_{\delta}(\bs{\eta},\bs{a},\bs{b},-\bs{\lambda})=
    \zeta_r^{\rm des}((-\lambda_j);(\xi_k);(\gamma_{jk});(\beta_j))=
    \\
    \prod_{j=1}^d(-1)^{\lambda_j}\lambda_j!
\sum_{\substack{m_j+\nu_{j1}+\cdots+\nu_{jr}=\lambda_j\\
    (1\leqslant  j\leqslant  d)}}
% \sum_{\substack{\nu_{11}+\cdots+\nu_{d1}=n_1\\
%     \cdots\\
%     \nu_{1r}+\cdots+\nu_{dr}=n_r}}
\Bigl(\prod_{k=1}^r F_{\delta(k)}^{\nu_{1k}+\cdots+\nu_{dk}}(\xi_k)\Bigr)
\\
\times
\Bigl(\prod_{j=1}^d \frac{(\sum_{k=1}^r \gamma_{jk} -\beta_j)^{m_j}}{m_j!}\Bigr)
% \Bigl(\prod_{j=1}^d \frac{D_j^{m_j}}{m_j!}\Bigr)
\Bigl(\prod_{j=1}^d\prod_{k=1}^r
\frac{\gamma_{jk}^{\nu_{jk}}}{\nu_{jk}!}\Bigr).
\end{multline}
\end{thm}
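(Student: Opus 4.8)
The plan is to start from the contour-integral representation of $\zeta_r^{\rm des}$ established in Theorem~\ref{thm:ann}, namely
\begin{equation*}
  \zeta_r^{\rm des}((s_j);(\xi_k);(\gamma_{jk});(\beta_j))
  =\frac{1}{\prod_{j=1}^d(e^{2\pi i s_j}-1)\Gamma(s_j)}
  \int_{\mathcal{C}^d}\prod_{j=1}^d x_j^{s_j-1}\exp\Bigl((\textstyle\sum_k\gamma_{jk}-\beta_j)x_j\Bigr)dx_j
  \prod_{k=1}^r F_{\delta(k)}\Bigl(\textstyle\sum_j\gamma_{jk}x_j,\xi_k\Bigr),
\end{equation*}
and to evaluate it at $s_j=-\lambda_j$. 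First I would recall the standard Hankel-contour computation: for a function $g$ holomorphic near the origin with Taylor expansion $g(x)=\sum_{n\geqslant 0}g_n x^n$, one has
\begin{equation*}
  \frac{1}{(e^{2\pi i s}-1)\Gamma(s)}\int_{\mathcal{C}}x^{s-1}g(x)\,dx\Big|_{s=-\lambda}
  =(-1)^\lambda\lambda!\,g_\lambda,
\end{equation*}
because the prefactor $1/((e^{2\pi is}-1)\Gamma(s))$ has a zero at $s=-\lambda$ that exactly cancels the pole coming from the residue at the origin, and only the coefficient of $x^{-1}$ in $x^{s-1}g(x)$ survives, i.e.\ the $x^\lambda$-coefficient of $g$. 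Applying this in each variable $x_j$ separately (the integrand factors as a product over $j$ of one-variable pieces times the $r$ factors $F_{\delta(k)}$, which couple the variables), the multiple contour integral at $(s_j)=(-\lambda_j)$ becomes $\prod_j(-1)^{\lambda_j}\lambda_j!$ times the coefficient of $\prod_j x_j^{\lambda_j}$ in
\begin{equation*}
  G(x_1,\dots,x_d):=\prod_{j=1}^d\exp\Bigl((\textstyle\sum_{k=1}^r\gamma_{jk}-\beta_j)x_j\Bigr)\cdot\prod_{k=1}^r F_{\delta(k)}\Bigl(\textstyle\sum_{j=1}^d\gamma_{jk}x_j,\xi_k\Bigr).
\end{equation*}

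Next I would extract that coefficient by multiplying out the product of power series. Using the Taylor expansion \eqref{F-taylor}, $F_{\delta(k)}(y,\xi_k)=\sum_{n\geqslant 0}F_{\delta(k)}^n(\xi_k)y^n/n!$ with $y=\sum_{j}\gamma_{jk}x_j$, the multinomial theorem gives
\begin{equation*}
  F_{\delta(k)}\Bigl(\textstyle\sum_j\gamma_{jk}x_j,\xi_k\Bigr)
  =\sum_{n\geqslant 0}\frac{F_{\delta(k)}^n(\xi_k)}{n!}\sum_{\nu_{1k}+\cdots+\nu_{dk}=n}\binom{n}{\nu_{1k},\dots,\nu_{dk}}\prod_{j=1}^d(\gamma_{jk}x_j)^{\nu_{jk}}
  =\sum_{\nu_{1k},\dots,\nu_{dk}\geqslant 0}F_{\delta(k)}^{\nu_{1k}+\cdots+\nu_{dk}}(\xi_k)\prod_{j=1}^d\frac{\gamma_{jk}^{\nu_{jk}}x_j^{\nu_{jk}}}{\nu_{jk}!},
\end{equation*}
and similarly $\exp((\sum_k\gamma_{jk}-\beta_j)x_j)=\sum_{m_j\geqslant 0}(\sum_k\gamma_{jk}-\beta_j)^{m_j}x_j^{m_j}/m_j!$. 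Collecting all factors carrying $x_j$ for each fixed $j$, the total power of $x_j$ in a generic term is $m_j+\nu_{j1}+\cdots+\nu_{jr}$, so the coefficient of $\prod_j x_j^{\lambda_j}$ is precisely the sum over all $(m_j),(\nu_{jk})$ with $m_j+\nu_{j1}+\cdots+\nu_{jr}=\lambda_j$ of the product $\bigl(\prod_k F_{\delta(k)}^{\nu_{1k}+\cdots+\nu_{dk}}(\xi_k)\bigr)\bigl(\prod_j(\sum_k\gamma_{jk}-\beta_j)^{m_j}/m_j!\bigr)\bigl(\prod_{j,k}\gamma_{jk}^{\nu_{jk}}/\nu_{jk}!\bigr)$. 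This is exactly the right-hand side of \eqref{eq:formula_spv}, completing the proof.

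The only genuine obstacle is the first step: justifying that evaluating the Hankel-type contour integral at the negative integer $s_j=-\lambda_j$ really picks out the $x_j^{\lambda_j}$-Taylor coefficient of $G$, uniformly in the several variables. This requires knowing that $G$ is holomorphic in a polydisc around the origin in $(x_1,\dots,x_d)$ — which follows from Lemma~\ref{lm:conv} (each $F_{\delta(k)}$ is holomorphic near $0$, being either $1/(e^y-\xi_k)$ with $\xi_k\neq 1$ or the regularized $1/(e^y-1)-ye^y/(e^y-1)^2$) — and that the contour $\mathcal{C}^d$ can be contracted so that the relevant residue computation applies coordinatewise; the necessary estimates are already contained in Theorem~\ref{thm:ann} and its proof, and this evaluation is carried out for $d=1$ in \cite{FKMT1} in the proof of their special-value formula, so here it is a routine multivariable adaptation.
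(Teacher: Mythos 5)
Your proposal is correct and follows essentially the same route as the paper: the paper also starts from the contour representation of Theorem \ref{thm:ann}, expands $\exp((\sum_k\gamma_{jk}-\beta_j)x_j)$ and $F_{\delta(k)}(\sum_j\gamma_{jk}x_j,\xi_k)$ via \eqref{F-taylor} and the multinomial theorem, and reads off the coefficient of $\prod_j x_j^{\lambda_j}$, the Hankel-contour evaluation at $s_j=-\lambda_j$ being taken as standard. Your explicit justification of that evaluation step is a welcome addition but not a different method.
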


\begin{proof}
  Let $D_j=\sum_{k=1}^r \gamma_{jk} -\beta_j$.  
  It is sufficient to calculate the
  Taylor expansion with respect to $x_j$'s of the integrand on the right-hand side
of \eqref{des-contour-2-2}.   Using \eqref{F-taylor} we have
\begin{multline}
  \prod_{j=1}^d \exp\Bigl((\sum_{k=1}^r \gamma_{jk} -\beta_j)x_j\Bigr)\prod_{k=1}^r 
F_{\delta(k)}\Bigl(\sum_{j=1}^d \gamma_{jk} x_j,\xi_k\Bigr)
  \\
  \begin{aligned}
    &=\sum_{m_{1},\ldots,m_{d}=0}^\infty 
    \left(\prod_{j=1}^d \frac{D_j^{m_j}}{m_j!}x_j^{m_j}\right)
    \sum_{n_{1},\ldots,n_{r}=0}^\infty
    \prod_{k=1}^r
    \frac{F_{\delta(k)}^{n_k}(\xi_k)}{n_k!}\Bigl(\sum_{j=1}^d \gamma_{jk} 
x_{j}\Bigr)^{n_k}
    \\
    &=\sum_{m_{1},\ldots,m_{d}=0}^\infty 
    \left(\prod_{j=1}^d \frac{D_j^{m_j}}{m_j!}x_j^{m_j}\right) 
    \sum_{n_{1},\ldots,n_{r}=0}^\infty
    \prod_{k=1}^r
    \frac{F_{\delta(k)}^{n_k}(\xi_k)}{n_k!}
    \\
    &\qquad\qquad\qquad\qquad\times 
    \sum_{\nu_{1k}+\cdots+\nu_{dk}=n_k}
    \binom{n_k}{\nu_{1k},\ldots,\nu_{dk}}\prod_{j=1}^d\gamma_{jk}^{\nu_{jk}}
    x_j^{\nu_{jk}}
    \\
    &=\sum_{m_{1},\ldots,m_{d}=0}^\infty 
    \left(\prod_{j=1}^d \frac{D_j^{m_j}}{m_j!}x_j^{m_j}\right) 
    \sum_{n_{1},\ldots,n_{r}=0}^\infty
    \prod_{k=1}^r    
    \sum_{\nu_{1k}+\cdots+\nu_{dk}=n_k}
    \frac{F_{\delta(k)}^{n_k}(\xi_k)}{\nu_{1k}!\cdots \nu_{dk}!}
    \prod_{j=1}^d \gamma_{jk}^{\nu_{jk}}x_j^{\nu_{jk}}
    \\
    &=\sum_{m_{1},\ldots,m_{d}=0}^\infty 
    \sum_{n_{1},\ldots,n_{r}=0}^\infty
    \sum_{\substack{\nu_{1k}+\cdots+\nu_{dk}=n_k\\
        (1\leqslant  k\leqslant  r)}}
    \Bigl(\prod_{k=1}^r F_{\delta(k)}^{n_k}(\xi_k)\Bigr)
    \Bigl(\prod_{j=1}^d \frac{D_j^{m_j}}{m_j!}\Bigr)
    \Bigl(\prod_{j=1}^d\prod_{k=1}^r
    \frac{\gamma_{jk}^{\nu_{jk}}}{\nu_{jk}!}\Bigr)
    \\
    &\qquad\qquad\qquad\qquad\times
    \prod_{j=1}^d
    x_j^{m_j+\nu_{j1}+\cdots+\nu_{jr}},
  \end{aligned}
\end{multline}
which gives the formula \eqref{eq:formula_spv}.

\end{proof}

\begin{rem}
  Since 
$D_j=0$ for all $j=1,\ldots,d$
in the case of 
multiple zeta-functions of generalized Euler-Zagier-Lerch
type, 
only terms with $m_j=0$ with $j=1,\ldots,d$
contributes to the sum in the formula \eqref{eq:formula_spv},
which recovers \cite[Theorem 3.7]{FKMT1}.
\end{rem}

Lastly, we give a formula which expresses the desingularized zeta-function as a linear combination of ordinary zeta-functions of the same type, which is a generalization of
\cite[Theorem 3.8]{FKMT1}.
To this end, we prepare some notation and assume the following condition: There exists a set of constants $c_{mj}$ ($1\leqslant  k,m\leqslant  r$) such that 
\begin{equation}
\label{eq:assumption}
  \sum_{j=1}^d c_{mj}\gamma_{jk}=\delta_{mk}
\end{equation}
for all $k,m$,
where $\delta_{mk}$ is the Kronecker delta.
Under this assumption,
 for indeterminates $\bs{u}=(u_j),\bs{v}=(v_j)$ ($j=1,\ldots,d$),
we define the generating function
\begin{equation}
G(\bs{u},\bs{v})=
\prod_{k=1}^r\Bigl\{1-\delta(k)\Bigl(
1+\sum_{j=1}^d c_{kj} (v_j^{-1}-\beta_j)
\Bigr)
\Bigl(\sum_{j=1}^d \gamma_{jk}u_jv_j\Bigr)\Bigr\},
\end{equation}
and also define constants $\alpha_{\bs{l},\bs{m}}$ as the coefficients of the expansion
\begin{equation}
  G(\bs{u},\bs{v})=\sum_{\bs{l},\bs{m}} \alpha_{\bs{l},\bs{m}}\prod_{j=1}^d u_j^{l_j}v_j^{m_j} \qquad \bs{l}=(l_1,\ldots,l_d), \quad \bs{m}=(m_1,\ldots,m_d).
\end{equation}
We define the Pochhammer symbol $(s)_k=s(s+1)\cdots(s+k-1)$ as usual.
Then we have the following theorem, which is a generalization of
\cite[Theorem 3.8]{FKMT1}.
\begin{thm}
\label{thm:des}
Under the assumption \eqref{eq:assumption}, we have
\begin{multline}
    \zeta_r^{\rm des}((s_j);(\xi_k);(\gamma_{jk});(\beta_j))
%   \lim_{c\to1}
% \frac{I_{c}((s_j);(\xi_k);(\gamma_{jk});(\beta_j))}{\prod_{k=1}^r(1-\delta_kc)}
\\
=\sum_{\bs{l},\bs{m}} \alpha_{\bs{l},\bs{m}}\Bigl(\prod_{j=1}^d(s_j)_{l_j}\Bigr)
%\zeta(\bs{\eta},\bs{a},\bs{b},\bs{s}+\bs{m}).
\zeta_r((s_j+m_j);(\xi_k);(\gamma_{jk});(\beta_j)).
\end{multline}
\end{thm}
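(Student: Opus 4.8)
The plan is to adapt the proof of \cite[Theorem 3.8]{FKMT1}; the two structural novelties here are the exponential factors $\exp\bigl((\sum_k\gamma_{jk}-\beta_j)x_j\bigr)$ (trivial in the Euler-Zagier-Lerch case treated there) and the general coefficient matrix $(\gamma_{jk})$, for which assumption \eqref{eq:assumption} provides a left inverse $(c_{kj})$. The starting point is the functional identity
\[
F_{\delta(k)}(y,\xi_k)=\Bigl(1+\delta(k)\,y\,\frac{d}{dy}\Bigr)\frac{1}{e^{y}-\xi_k},
\]
which is immediate from $\frac{d}{dy}\bigl((e^y-1)^{-1}\bigr)=-\frac{e^y}{(e^y-1)^2}$ when $\xi_k=1$, and is trivial when $\xi_k\neq1$.

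First I would work in the region where $\Re s_j$ is large for all $j$, and there reduce the Hankel-contour representation \eqref{des-contour-2-2} to an integral over $d$ half-lines: this is legitimate precisely because desingularization has removed the poles at the origin, so the integrand is holomorphic there and the contributions of the small circles are $O(\varepsilon^{\min_j\Re s_j})\to0$. Along these half-lines $\Re y_k>0$, where $y_k=\sum_j\gamma_{jk}x_j$, so one may insert the geometric series $(e^{y_k}-\xi_k)^{-1}=\sum_{n_k\geqslant0}\xi_k^{n_k}e^{-(n_k+1)y_k}$, which by the above identity gives
\[
\prod_{k}F_{\delta(k)}(y_k,\xi_k)=\sum_{\bs n\in\mathbb{N}_0^{r}}\Bigl(\prod_{k}\xi_k^{n_k}\Bigr)\Bigl(\prod_{k}\bigl(1-\delta(k)(n_k+1)y_k\bigr)\Bigr)e^{-\sum_k(n_k+1)y_k}.
\]
Interchanging $\sum_{\bs n}$ with the $d$-fold integral (justified for $\Re s_j$ large by absolute convergence, the integrated $\bs n$-sum being dominated by the series defining $\zeta_r$ at arguments still in $\{\Re>1\}^d$), collecting the exponentials so that the exponent of $x_j$ becomes $-a_j$ with $a_j:=\beta_j+\sum_k\gamma_{jk}n_k$ (note $\Re a_j>0$), and using $\Gamma(s_j)^{-1}\int_0^\infty x_j^{s_j+p-1}e^{-a_jx_j}\,dx_j=(s_j)_p\,a_j^{-(s_j+p)}$, one finds that each monomial $\prod_j x_j^{p_j}$ arising from the expansion of $\prod_k\bigl(1-\delta(k)(n_k+1)y_k\bigr)$ contributes the Pochhammer factors $\prod_j(s_j)_{p_j}$ together with $\prod_j a_j^{-(s_j+p_j)}$.

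The key algebraic point is that in each selected factor one may rewrite $n_k+1=1+\sum_j c_{kj}(a_j-\beta_j)$ by \eqref{eq:assumption}; multiplied into $\prod_j a_j^{-(s_j+p_j)}$, the constant parts $1$ and $-c_{kj}\beta_j$ leave the exponents unchanged, while each factor $c_{kj}a_j$ lowers the exponent of $a_j$ by one. Since $\sum_{\bs n}\bigl(\prod_k\xi_k^{n_k}\bigr)\prod_j a_j^{-(s_j+m_j)}=\zeta_r((s_j+m_j);(\xi_k);(\gamma_{jk});(\beta_j))$ by definition, a single choice --- for each $k$ with $\delta(k)=1$, either omit $k$ or select one summand $\gamma_{j'k}x_{j'}$ of $y_k$ together with one of $1,\,-c_{kj}\beta_j,\,c_{kj}a_j$ --- contributes a scalar in the $\gamma_{jk},c_{kj},\beta_j$ times $\bigl(\prod_j(s_j)_{l_j}\bigr)\zeta_r((s_j+m_j);(\xi_k);(\gamma_{jk});(\beta_j))$, where $l_j$ is the number of selections of $x_j$ and $m_j$ equals $l_j$ minus the number of selections of $a_j$. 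Comparing term by term with the definition of $G(\bs u,\bs v)$: $\gamma_{j'k}u_{j'}v_{j'}$ records the selection of $x_{j'}$ (raising both the $u_{j'}$- and the $v_{j'}$-degree by $1$), $c_{kj}v_j^{-1}$ records the selection of $a_j$ (lowering the $v_j$-degree by $1$), the summands $1$ and $-c_{kj}\beta_j$ carry the corresponding scalars, and the prefactor $-\delta(k)$ records inclusion of $k$. Hence $\alpha_{\bs l,\bs m}$ is exactly the sum of these scalars over all choices with the given $\bs l,\bs m$, so summing over all choices produces $\sum_{\bs l,\bs m}\alpha_{\bs l,\bs m}\bigl(\prod_j(s_j)_{l_j}\bigr)\zeta_r((s_j+m_j);(\xi_k);(\gamma_{jk});(\beta_j))$. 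This proves the identity for $\Re s_j$ large; since the right-hand side is meromorphic on $\c^d$ (a finite sum of meromorphic continuations of $\zeta_r$, cf. \cite{Kom10}) while the left-hand side is entire by Theorem \ref{thm:ann}, it propagates to all of $\c^d$ by analytic continuation.

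I expect the main obstacle to be the analytic bookkeeping of the second paragraph: making the reduction to half-line integrals and the term-by-term integration rigorous and uniform in the parameters --- which is precisely where $\Re\gamma_{jk}\geqslant0$, $\Re\beta_j>0$ and the regularity at the origin created by desingularization enter --- carried out as in \cite{Kom10} and \cite[Theorem 3.8]{FKMT1}. Granting this, the identification with $G(\bs u,\bs v)$ is a purely formal matching of terms.
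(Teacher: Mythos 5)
Your proposal is correct and follows essentially the same route as the paper's proof: restrict to large $\Re s_j$, collapse to the half-line integral with the $F_{\delta(k)}$ integrand, expand into exponential series, integrate termwise to produce Pochhammer symbols and shifted arguments $\beta_j+\sum_k\gamma_{jk}n_k$, rewrite $n_k+1$ via \eqref{eq:assumption}, and match the resulting coefficients with $G(\boldsymbol{u},\boldsymbol{v})$, finishing by analytic continuation. The only difference is organizational: the paper books the same combinatorics through subsets $K\subset\{1,\dots,r\}$, coefficients $B_{K,\boldsymbol{l}}$, the counts $J(j)$ and an auxiliary generating function $H(\boldsymbol{u},\boldsymbol{v})$ proved equal to $G(\boldsymbol{u},\boldsymbol{v})$, whereas you match terms directly via the identity $F_{\delta}(y,\xi)=(1+\delta\,y\,\tfrac{d}{dy})(e^{y}-\xi)^{-1}$.
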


\begin{proof}
First note that it is sufficient to show the statement
with sufficiently large $\Re s_j$ due to the analytic continuation.
Then we can write
\begin{equation}\label{des-eq-lim}
\zeta_r^{\rm des}((s_j);(\xi_k);(\gamma_{jk});(\beta_j))
=\lim_{c\to 1}\frac{I_c((s_j);(\xi_k);(\gamma_{jk});(\beta_j))}
{\prod_{k=1}^r(1-\delta(k)c)},
\end{equation}
where
\begin{multline}
%  I_{c,\delta}(\bs{\eta},\bs{a},\bs{b},\bs{s}):=
  I_{c}((s_j);(\xi_k);(\gamma_{jk});(\beta_j)):=
  \frac{1}{\prod_{j=1}^d\Gamma(s_j)}
  \int_{[0,\infty)^d}
  \prod_{j=1}^d x_j^{s_j-1}\exp\Bigl((\sum_{k=1}^r \gamma_{jk} -\beta_j)x_j\Bigr)dx_j
  \\
  \times\prod_{k=1}^r\Biggl(\frac{1}{\exp\Bigl(\sum_{j=1}^d \gamma_{jk} x_j\Bigr)-\xi_k}
  -\delta(k)
  \frac{c}{\exp\Bigl(c\sum_{j=1}^d \gamma_{jk} x_j\Bigr)-1}
  \Biggr).
\end{multline}

We obtain
\begin{multline}
  \lim_{c\to1}\frac{I_{c}((s_j);(\xi_k);(\gamma_{jk});(\beta_j))}
{\prod_{k=1}^r(1-\delta(k)c)}
  \\
  \begin{aligned}
    &=
    \lim_{c\to1}
    \frac{1}{\prod_{j=1}^d\Gamma(s_j)}
    \int_{[0,\infty)^d}
    \prod_{j=1}^d x_j^{s_j-1}\exp\Bigl((\sum_{k=1}^r \gamma_{jk} -\beta_j)x_j\Bigr)dx_j
    \\
    &
    \qquad
    % \times\prod_{k=1}^r\frac{1}{1-\delta_kc}\Biggl(\frac{1}{\exp\Bigl(\sum_{j=1}^d \gamma_{jk} x_j\Bigr)-\xi_k}
    % -
    % \delta_k\frac{c}{\exp\Bigl(c\sum_{j=1}^d \gamma_{jk} x_j\Bigr)-1}
    % \Biggr)
    \times
    \prod_{k=1}^r
    F_{c,\delta(k)}\Bigl(\sum_{j=1}^d \gamma_{jk} x_j,\xi_k\Bigr)
    \\
    &=
    \frac{1}{\prod_{j=1}^d\Gamma(s_j)}
    \int_{[0,\infty)^d}
    \prod_{j=1}^d x_j^{s_j-1}\exp\Bigl((\sum_{k=1}^r \gamma_{jk} -\beta_j)x_j\Bigr)dx_j
    \\
    &
    \qquad
    \times
    \prod_{k=1}^r
    F_{\delta(k)}\Bigl(\sum_{j=1}^d \gamma_{jk} x_j,\xi_k\Bigr).
  \end{aligned}
\end{multline}
Since for $|\xi|\leqslant  1$ and $y>0$, equation \eqref{eq:eyxi} and
\begin{equation}
  \frac{e^y}{(e^y-1)^2}=\sum_{n=0}^\infty (n+1) e^{-(n+1)y}
\end{equation}
holds.
% \begin{align}
%   \frac{1}{e^y-\xi}&=\sum_{n=0}^\infty \xi^{n} e^{-(n+1)y},\\
%   \frac{e^y}{(e^y-1)^2}&=\sum_{n=0}^\infty (n+1) e^{-(n+1)y},
% \end{align}
Using these formulas, for any $K\subset\{1,\ldots,r\}$ we have
\begin{multline}
\label{eq:c1}
  \int_{[0,\infty)^d}
  \prod_{j=1}^d x_j^{s_j-1}\exp\Bigl((\sum_{k=1}^r \gamma_{jk} -\beta_j)x_j\Bigr)dx_j
  \\
  \times
  \prod_{k\notin K}\frac{1}{\exp\Bigl(\sum_{j=1}^d \gamma_{jk}x_j\Bigr)-\xi_k}
  \prod_{k\in K}\delta(k)\frac{\Bigl(\sum_{j=1}^d \gamma_{jk}x_j\Bigr)\exp\Bigl(\sum_{j=1}^d \gamma_{jk}x_j\Bigr)}{\Big(\exp\Bigl(\sum_{j=1}^d \gamma_{jk}x_j\Bigr)-1\Bigr)^2}
  \\
  \begin{aligned}
    &=
    \int_{[0,\infty)^d}
    \prod_{j=1}^d x_j^{s_j-1}\exp\Bigl((\sum_{k=1}^r \gamma_{jk} -\beta_j)x_j\Bigr)dx_j
    \prod_{k\in K}\delta(k)\Bigl(\sum_{j=1}^d \gamma_{jk}x_j\Bigr)
    \\
    &\qquad\times
    \prod_{k\notin K}\Biggl(\sum_{h_k=0}^\infty\xi_k^{h_k}\exp\Bigl(-(h_k+1)\sum_{j=1}^d \gamma_{jk}x_j\Bigr)\Biggr)
    \\
    &\qquad\times
    \prod_{k\in K}\Biggl(\sum_{h_k=0}^\infty (h_k+1)\exp\Bigl(-(h_k+1)\sum_{j=1}^d \gamma_{jk}x_j\Bigr)\Biggr)
    \\
    &=
    \sum_{\substack{h_k\geqslant 0\\1\leqslant  k\leqslant  r}}
    \Bigl(\prod_{k\in K}(h_k+1)\Bigr)
    \int_{[0,\infty)^d}
    \prod_{j=1}^d x_j^{s_j-1}\exp\Bigl(-(\sum_{k=1}^r \gamma_{jk} h_k +\beta_j)x_j\Bigr)dx_j
    \\
    &\qquad\times\prod_{k\notin K} \xi_k^{h_k}
    \prod_{k\in K}\delta(k)\Bigl(\sum_{j=1}^d \gamma_{jk}x_j\Bigr).
  \end{aligned}
\end{multline}
(When $K=\emptyset$, the empty product is to be regarded as $1$.)
Since $\delta(k)=\delta(k)\xi_k^{h_k}$, we have
\begin{equation}
  \prod_{k\in K}\delta(k)
  \prod_{k\notin K}\xi_k^{h_k}=
  \prod_{k\in K}\delta(k)
  \prod_{k=1}^r\xi_k^{h_k}.
\end{equation}
Also, since we assume \eqref{eq:assumption}, we can write
\begin{equation}
    \prod_{k\in K}(h_k+1)=  
    \prod_{l\in K}\Bigl( 
    \sum_{j=1}^d c_{lj}(\beta_j+\sum_{k=1}^r \gamma_{jk}h_k-\beta_j)
    +1\Bigr).
\end{equation}
Therefore, introducing constants $B_{K,\bs{l}}$ with 
$\bs{l}=(l_1,\ldots,l_d)\in\mathbb{N}_0^d$ as the coefficients of the expansion
\begin{equation}
  \label{eq:gen_B}
  \prod_{k\in K}\delta(k)\Bigl(\sum_{j=1}^d \gamma_{jk}x_j\Bigr)=\sum_{\bs{l}} B_{K,\bs{l}} \prod_{j=1}^d x_j^{l_j},
\end{equation}
we find that \eqref{eq:c1} is equal to
\begin{align}
\label{eq:c2}
%  \begin{split}
    % %%%%%%%%%%%%%%%%%%%%%%%%%%%%%%%%%%%%%%%%%%%%%%%%%% 
    % &%=
    % \sum_{\substack{m_k\geqslant 0\\1\leqslant  k\leqslant  r}}
    % \Bigl(\prod_{l\in K}\Bigl( 
    % \sum_{j=1}^d c_{lj}(\beta_j+\sum_{k=1}^r \gamma_{jk}m_k-\beta_j)
    % +1\Bigr)
    % \Bigr)
    % \prod_{k\notin K} \xi_k^{m_k}
    % \\
    % &\qquad\times
    % \int_{[0,\infty)^d}
    % \prod_{j=1}^d x_j^{s_j-1}\exp\Bigl(-(\sum_{k=1}^r \gamma_{jk}m_k  +\beta_j)x_j\Bigr)dx_j
    % \prod_{k\in K}\delta_k\Bigl(\sum_{j=1}^d \gamma_{jk}x_j\Bigr)
    % \\
    &%=
    \sum_{\bs{l}}B_{K,\bs{l}}
    \sum_{\substack{h_k\geqslant 0\\1\leqslant  k\leqslant  r}}
    \Bigl(\prod_{m\in K}\Bigl(
    \sum_{j=1}^d c_{mj}(\beta_j+\sum_{k=1}^r \gamma_{jk}h_k-\beta_j)
    +1\Bigr)
    \Bigr)   \prod_{k=1}^r \xi_k^{h_k}
    \\
    &\qquad\times
    \int_{[0,\infty)^d}
    \prod_{j=1}^d x_j^{s_j+l_j-1}\exp\Bigl(-(\sum_{k=1}^r \gamma_{jk} h_k +\beta_j)x_j\Bigr)dx_j
    \notag\\
    &=
    \sum_{\bs{l}}B_{K,\bs{l}}\sum_{\substack{h_k\geqslant 0\\1\leqslant  k\leqslant  r}}
    \Bigl(\prod_{m\in K}\Bigl(  
    \sum_{j=1}^d c_{mj}(\beta_j+\sum_{k=1}^r \gamma_{jk}h_k)+c_{m0}
    \Bigr)
    \Bigr) \prod_{k=1}^r \xi_k^{h_k}
    \notag\\
    &\qquad\times
    \prod_{j=1}^d\Gamma(s_j+l_j)
    \frac{1}{\Bigl(\beta_j+\sum_{k=1}^r \gamma_{jk}h_k\Bigr)^{s_j+l_j}},\notag
%  \end{split}
\end{align}
where $c_{m0}:=1-\sum_{j=1}^d c_{mj}\beta_j$. 
Consider the factor
$$
\prod_{m\in K}\Bigl(                                                            
    \sum_{j=1}^d c_{mj}(\beta_j+\sum_{k=1}^r \gamma_{jk}h_k)+c_{m0}                     
    \Bigr)\;(=: Q, \;{\rm say})
$$
on the right-hand side of \eqref{eq:c2}.
Putting
\begin{align}                                                                           
\alpha_j:=\left\{                                                                       
\begin{array}{ll}                                                                       
    \beta_j+\sum_{k=1}^r \gamma_{jk}h_k & (1\leqslant  j\leqslant  d),\\                
    1 & (j=0),                                                                          
\end{array}\right.                                                                      
\end{align}
we find that 
$$                                                                                      
Q=\prod_{m\in K}\sum_{j=0}^d c_{mj}\alpha_j                                              
=\sum_{\substack{0\leqslant j_m\leqslant d\\m\in K}} \Bigl(\prod_{m\in K}c_{mj_m}\Bigr) 
\Bigl(\prod_{m\in K}\alpha_{j_m}\Bigr).
$$
For each $\{j_m\;|\;m\in K\}$, define
$$
J(j)=J(j;\{j_m\}):=|\{m\in K\;|\;j_m=j\}|=\sum_{m\in K}\delta_{j,j_m}\qquad
(1\leqslant  j\leqslant  d).
$$
Then we see that
$$                                                                                      
\prod_{m\in K}\alpha_{j_m}=\prod_{j=1}^d \alpha_j^{J(j)}. 
$$
Therefore we obtain
\begin{align}\label{tochuu}
Q=\sum_{\substack{0\leqslant j_m\leqslant d\\m\in K}}
    \Bigl(\prod_{m\in K}c_{mj_m}\Bigr)    
    \prod_{j=1}^d(\beta_j+\sum_{k=1}^r \gamma_{jk}h_k)^{J(j)}.
\end{align}

Using \eqref{tochuu} we find that
\eqref{eq:c2} can be rewritten as
\begin{align}
%  \begin{split}
    &%=
    \sum_{\bs{l}}B_{K,\bs{l}}
    \Bigl(\prod_{j=1}^d\Gamma(s_j+l_j)\Bigr)
    \notag\\
    &\quad\times
    \sum_{\substack{h_k\geqslant 0\\1\leqslant  k\leqslant  r}}
    \Biggl(
    \sum_{\substack{0\leqslant j_m\leqslant d\\m\in K}}
    \Bigl(
    \prod_{m\in K}
    c_{mj_m}
    \Bigr)
    \frac{\prod_{k=1}^r \xi_k^{h_k}}{\prod_{j=1}^d
    \Bigl(\beta_j+\sum_{k=1}^r \gamma_{jk}h_k\Bigr)^{s_j+l_j-J(j)}}\Biggr)
    \notag\\
    &=
    \sum_{\bs{l}}B_{K,\bs{l}}
    \Bigl(\prod_{j=1}^d\Gamma(s_j+l_j)\Bigr)
    \sum_{\substack{0\leqslant j_m\leqslant d\\m\in K}}
    \Bigl(\prod_{m\in K}c_{mj_m}\Bigr)
\zeta_r\Bigl((s_j+l_j-J(j));(\xi_k);(\gamma_{jk});(\beta_j)\Bigr).\notag
%  \end{split}
\end{align}
Therefore from \eqref{des-eq-lim} we obtain

\begin{multline}
\zeta_r^{\rm des}((s_j);(\xi_k);(\gamma_{jk});(\beta_j))
    \\
    \begin{aligned}
      &=
%    \sum_{K\subset\{1,\ldots,r\}}(-1)^{|K|}
%    \sum_{\bs{l}}B_{K,\bs{l}}
%    \Bigl(\prod_{j=1}^d\frac{\Gamma(s_j+l_j)}{\Gamma(s_j)}
%    \Bigr)
%    \\
%    &\qquad\times
%    \sum_{\substack{j_m=0\\m\in K}}^d 
%    \Bigl(\prod_{m\in K}c_{mj_m}\Bigr)
%    \zeta_r\Bigl((s_j+l_j-\sum_{m\in K}\delta_{j,j_m});(\xi_k);(\gamma_{jk});(\beta_j)\Bigr)
%    \\
%    &=
    \sum_{K\subset\{1,\ldots,r\}}(-1)^{|K|}
    \sum_{\bs{l}}B_{K,\bs{l}}
    \Bigl(\prod_{j=1}^d(s_j)_{l_j}
    \Bigr)
    \\
    &\qquad\times
    \sum_{\substack{0\leqslant j_m\leqslant d\\m\in K}}
    \Bigl(\prod_{m\in K}c_{mj_m}\Bigr)
%    \zeta(\bs{\eta},\bs{a},\bs{b},(s_j+l_j-\sum_{m\in K}\delta_{j,j_m})).
    \zeta_r\Bigl((s_j+l_j-J(j));(\xi_k);(\gamma_{jk});(\beta_j)\Bigr).
  \end{aligned}
\end{multline}
Put
\begin{equation}                                                                        
  H(\bs{u},\bs{v}):= 
  \sum_{K\subset\{1,\ldots,r\}}(-1)^{|K|}                                               
  \sum_{\bs{l}}B_{K,\bs{l}}                                                             
  \sum_{\substack{0\leqslant j_m\leqslant d\\m\in K}}
  \Bigl(\prod_{m\in K}c_{mj_m}\Bigr)                                                    
  \prod_{j=1}^d                                                                         
  u_j^{l_j}v_j^{l_j-J(j)}.                                       
\end{equation}
Our last task is to show that
\begin{equation}\label{G=H}
  G(\bs{u},\bs{v})=H(\bs{u},\bs{v}).
\end{equation}
From
\eqref{eq:gen_B}, we have
\begin{align}\label{H-u-v}
%  \begin{split}
    H(\bs{u},\bs{v})
    &=
    \sum_{K\subset\{1,\ldots,r\}}(-1)^{|K|}
    \sum_{\substack{0\leqslant j_m\leqslant d\\m\in K}}
    \Bigl(\prod_{m\in K}c_{mj_m}\Bigr)
    \Bigl(\prod_{j=1}^d
    v_j^{-J(j)}\Bigr)
    \sum_{\bs{l}}B_{K,\bs{l}}
    \prod_{j=1}^d
    u_j^{l_j}v_j^{l_j}
    \\
    &=
    \sum_{K\subset\{1,\ldots,r\}}(-1)^{|K|}
    \sum_{\substack{0\leqslant j_m\leqslant d\\m\in K}} 
    \Bigl(\prod_{m\in K}c_{mj_m}\Bigr)
    \Bigl(\prod_{m\in K}\prod_{j=1}^d
    v_j^{-\delta_{j,j_m}}\Bigr)
    \prod_{k\in K}\delta(k)\Bigl(\sum_{j=1}^d \gamma_{jk}u_jv_j\Bigr).
    \notag
\end{align}
Since we see that
\begin{align*}
\prod_{j=1}^d v_j^{-\delta_{j,j_m}}=\left\{
   \begin{array}{ll}
      v_{j_m}^{-1}  &  (j_m\geqslant  1),\\
      1             &  (j_m=0),
   \end{array}\right.
\end{align*}
under the convention $v_0=1$, we find that the right-hand side of \eqref{H-u-v} is equal to 
\begin{align}
    &%=
    \sum_{K\subset\{1,\ldots,r\}}(-1)^{|K|}                                           
    \sum_{\substack{0\leqslant j_m\leqslant d\\m\in K}}
    \Bigl(\prod_{m\in K}c_{mj_m}v_{j_m}^{-1}\Bigr)
    \prod_{k\in K}\delta(k)\Bigl(\sum_{j=1}^d \gamma_{jk}u_jv_j\Bigr)
    \notag
    \\
    &=
    \sum_{K\subset\{1,\ldots,r\}}(-1)^{|K|}
    \Bigl(\prod_{m\in K}\Bigl(  
    \sum_{j=1}^d c_{mj} v_j^{-1}+1-\sum_{j=1}^d c_{mj}\beta_j
    \Bigr)
    \Bigr)
    \notag\\
    &\qquad\times
    \prod_{k\in K}\delta(k)\Bigl(\sum_{j=1}^d \gamma_{jk}u_jv_j\Bigr)
    \notag\\
    &=
    \sum_{K\subset\{1,\ldots,r\}}(-1)^{|K|}
    \Bigl\{\prod_{k\in K}\delta(k)\Bigl(  
    1+\sum_{j=1}^d c_{kj} (v_j^{-1}-\beta_j)
    \Bigr)
    \Bigl(\sum_{j=1}^d \gamma_{jk}u_jv_j\Bigr)\Bigr\}
    \notag\\
    &=
    \prod_{k=1}^r\Bigl\{1-\delta(k)\Bigl( 
    1+\sum_{j=1}^d c_{kj} (v_j^{-1}-\beta_j)
    \Bigr)
    \Bigl(\sum_{j=1}^d \gamma_{jk}u_jv_j\Bigr)\Bigr\}=G(\bs{u},\bs{v}),\notag
%  \end{split}
\end{align}
hence \eqref{G=H}.   Therefore, regarding $(s_j)_{l_j}$ and
$\zeta_r\Bigl((s_j+l_j-J(j));(\xi_k);(\gamma_{jk});(\beta_j)\Bigr)$ as
indeterminates $u_j^{l_j}$ and $v_j^{l_j-J(j)}$, respectively, we arrive at the
assertion of the theorem.
\end{proof}

%%%%%%%%%%%%%%%%%%%%%%%%%%%%%%%%%%%%%%%%%%%%%%%%%%%%%%%%
\section{Examples of desingularization}\label{sec-4}
%%%%%%%%%%%%%%%%%%%%%%%%%%%%%%%%%%%%%%%%%%%%%%%%%%%%%%%%

Our Theorem \ref{thm:des} in the preceding section requires the assumption
\eqref{eq:assumption}.   In this section we see how this assumption is satisfied 
in examples.

\begin{exa}
  In the case of the triple zeta-function of generalized Euler-Zagier-Lerch type ($d=r=3$), we have
  \begin{gather}
    (\xi_k)=
    \begin{pmatrix}
      1 & 1& 1
    \end{pmatrix},\qquad
    (\beta_j)=
    \begin{pmatrix}
      \gamma_1\; & \gamma_1+\gamma_2\; & \gamma_1+\gamma_2+\gamma_3
    \end{pmatrix},
    \\
    (c_{mj})=
    \begin{pmatrix}
      \gamma_1^{-1} & 0 & 0 \\
      -\gamma_2^{-1} & \gamma_2^{-1} & 0 \\
      0 & -\gamma_3^{-1} & \gamma_3^{-1} 
    \end{pmatrix},
    \qquad
    (\gamma_{jk})=
    \begin{pmatrix}
      \gamma_1 & 0 & 0 \\
      \gamma_1 & \gamma_2 & 0 \\
      \gamma_1 & \gamma_2 & \gamma_3 
    \end{pmatrix}.
  \end{gather}
The generating function constructed by using these data
coincides with $G(\bs{u},\bs{v})$ in \cite[Example 4.4]{FKMT1}.
\end{exa}

% \begin{exa}
% In the case of
%   Modell-Tornheim double zeta-function, which is defined by the double series,
%   \begin{equation}
%     \zeta_{MT,2}(s_1,s_2,s_3)=\sum_{m_1=1}^\infty\sum_{m_2=1}^\infty
%     \frac{1}{m_1^{s_1}m_2^{s_2}(m_1+m_2)^{s_1+s_2}},
%   \end{equation}
% we have
%   \begin{gather}
%     (\xi_k)=
%     \begin{pmatrix}
%       1 & 1
%     \end{pmatrix},\qquad
%     (\beta_j)=
%     \begin{pmatrix}
%       1 & 1 & 2
%     \end{pmatrix},
%     \\
%     (c_{mj})=
%     \begin{pmatrix}
%       1 & 0 & 0 \\
%       0 & 1 & 0  
%     \end{pmatrix},
%     \qquad
%     (\gamma_{jk})=
%     \begin{pmatrix}
%       1 & 0 \\
%       0 & 1 \\
%       1 & 1 
%     \end{pmatrix}.
%   \end{gather}
% Therefore we have
% \begin{equation}
%   \begin{split}
%     G(u,v)
%     &=(v_1^{-1}(u_1v_1+u_3v_3)-1)(v_2^{-1}(u_2v_2+u_3v_3)-1)
%     \\
%     &=(u_1-1)(u_2-1)+u_3(u_1-1)v_2^{-1}v_3+u_3(u_2-1)v_1^{-1}v_3+u_3^2v_1^{-1}v_2^{-1}v_3^2.
%   \end{split}
% \end{equation}
% From this expression, we obtain
% \begin{multline}
% \zeta_{MT,2}^{\rm des}(s_1,s_2,s_3)=
%   (s_1-1)(s_2-1)\zeta_{MT,2}(s_1,s_2,s_3)+s_3(s_1-1)\zeta_{MT,2}(s_1,s_2-1,s_3+1)
%   \\
%   +s_3(s_2-1)\zeta_{MT,2}(s_1-1,s_2,s_3+1)
%   +s_3(s_3+1)\zeta_{MT,2}(s_1-1,s_2-1,s_3+2).
% \end{multline}
% \end{exa}

\begin{exa}
\label{exa:MT2}
Consider the case of the
  Mordell-Tornheim double zeta-function, which is defined by the double series
  \begin{equation}\label{Def-MT}
    \zeta_{MT,2}(s_1,s_2,s_3)=\sum_{m_1=1}^\infty\sum_{m_2=1}^\infty
%    \frac{\xi_1^{m_1}\xi_2^{m_2}}{m_1^{s_1}m_2^{s_2}(m_1+m_2)^{s_1+s_2}}.
    \frac{1}{m_1^{s_1}m_2^{s_2}(m_1+m_2)^{s_3}}
  \end{equation}
(cf. \cite{Mat02} \cite{Mat03c}), corresponding to $d=3$ and $r=2$.
In this case, 
constants $c_{mj}$ are not uniquely determined. For any $a,b\in\mathbb{C}$, we have
  \begin{gather}
    (\xi_k)=
    \begin{pmatrix}
%      \xi_1 & \xi_2
      1 & 1
    \end{pmatrix},\qquad
    (\beta_j)=
    \begin{pmatrix}
      1 & 1 & 2
    \end{pmatrix},
    \\
    (c_{mj})=
    \begin{pmatrix}
      a+1 & a & -a \\
      b & b+1 & -b  
    \end{pmatrix},
    \qquad
    (\gamma_{jk})=
    \begin{pmatrix}
      1 & 0 \\
      0 & 1 \\
      1 & 1 
    \end{pmatrix}.
  \end{gather}
Therefore we have
\begin{align*}
%  \begin{split}
    G(\bs{u},\bs{v})
    &=(1-v_1^{-1}(u_1v_1+u_3v_3))(1-v_2^{-1}(u_2v_2+u_3v_3))\\
    &\quad-(1-v_2^{-1}(u_2v_2+u_3v_3))(v_1^{-1}+v_2^{-1}-v_3^{-1})(u_1v_1+u_3v_3)a\\
    &\quad-(1-v_1^{-1}(u_1v_1+u_3v_3))(v_1^{-1}+v_2^{-1}-v_3^{-1})(u_2v_2+u_3v_3)b\\
    &\quad+(v_1^{-1}+v_2^{-1}-v_3^{-1})^2(u_1v_1+u_3v_3)(u_2v_2+u_3v_3)ab
    \\
    &=(u_1-1)(u_2-1)+u_3(u_1-1)v_2^{-1}v_3+u_3(u_2-1)v_1^{-1}v_3+u_3^2v_1^{-1}v_2^{-1}v_3^2\\
    &\quad+\Bigl\{(u_2-1)(u_1-u_3)-u_3(1-u_1-u_2+u_3)v_2^{-1}v_3+u_3^2v_2^{-2}v_3^2\\
    &\qquad+u_1(u_2-u_3-1)v_1v_2^{-1}-u_1(u_2-1)v_1v_3^{-1}+u_1u_3v_1v_2^{-2}v_3\\
    &\qquad+(u_2-1)u_3v_1^{-1}v_3+u_3^2v_1^{-1}v_2^{-1}v_3^{2}\Bigr\}a
    \\
    &\quad+\Bigl\{(u_1-1)(u_2-u_3)-u_3(1-u_1-u_2+u_3)v_1^{-1}v_3+u_3^2v_1^{-2}v_3^2\\
    &\qquad+u_2(u_1-u_3-1)v_1^{-1}v_2-u_2(u_1-1)v_2v_3^{-1}+u_2u_3v_1^{-2}v_2v_3\\
    &\qquad+(u_1-1)u_3v_2^{-1}v_3+u_3^2v_1^{-1}v_2^{-1}v_3^{2}\Bigr\}b
    \\
    &\quad+\Bigl\{u_3^2-2 u_1 u_3-2 u_2 u_3+2 u_1 u_2
    \\
    &\qquad
    +u_3 (u_1+2 u_2-2 u_3) v_1^{-1} v_3 
    +u_3 (2 u_1+u_2-2 u_3) v_2^{-1} v_3
    \\
    &\qquad
    +u_1 u_3 v_1 v_2^{-2} v_3 
    +u_2 u_3 v_1^{-2} v_2 v_3 
    \\
    &\qquad
    +u_1 (u_2-2 u_3) v_1 v_2^{-1}
    +u_2 (u_1-2 u_3) v_1^{-1} v_2 
    \\
    &\qquad 
    -u_1 (2 u_2-u_3) v_1 v_3^{-1}
    -u_2 (2 u_1-u_3) v_2 v_3^{-1}
    \\
    &\qquad
    +u_3^2 v_1^{-2}v_3^2 
    +u_3^2 v_2^{-2} v_3^2 
    \\
    &\qquad
    +u_1 u_2 v_1 v_2 v_3^{-2}
    +2 u_3^2 v_1^{-1} v_2^{-1} v_3^2
    \Bigr\}ab.
%  \end{split}
\end{align*}
From the constant part of this expression with respect to $a$ and $b$, we obtain
the following identity, which is an example of Theorem \ref{thm:des}. 
\begin{multline}
\zeta_{MT,2}^{\rm des}(s_1,s_2,s_3)=
  (s_1-1)(s_2-1)\zeta_{MT,2}(s_1,s_2,s_3)+s_3(s_1-1)\zeta_{MT,2}(s_1,s_2-1,s_3+1)
  \\
  +s_3(s_2-1)\zeta_{MT,2}(s_1-1,s_2,s_3+1)
  +s_3(s_3+1)\zeta_{MT,2}(s_1-1,s_2-1,s_3+2).
\end{multline}
On the other hand, coefficients of $a$, $b$, and $ab$ give rise to the following 
identities\footnote{Here, the second term of \eqref{eq:triv1} is not $s_3(1-s_1-s_2+s_3)$, but
$s_3(2-s_1-s_2+s_3)$, because the factor corresponding to 
$u_3(1+u_3)=u_3+u_3^2$ is not $s_3(1+s_3)$, but 
$s_3+s_3(s_3+1)=s_3(2+s_3)$.
}:
\begin{gather}
  \label{eq:triv1}
  \begin{aligned}
    &(s_2-1)(s_1-s_3)\zeta_{MT,2}(s_1,s_2,s_3)
    -s_3(2-s_1-s_2+s_3)\zeta_{MT,2}(s_1,s_2-1,s_3+1)%v_2^{-1}v_3
    \\
    &\qquad
    +s_3(s_3+1)\zeta_{MT,2}(s_1,s_2-2,s_3+2)%v_2^{-2}v_3^2
    +s_1(s_2-s_3-1)\zeta_{MT,2}(s_1+1,s_2-1,s_3)%v_1v_2^{-1}
    \\
    &\qquad
    -s_1(s_2-1)\zeta_{MT,2}(s_1+1,s_2,s_3-1)%v_1v_3^{-1}
    +s_1s_3\zeta_{MT,2}(s_1+1,s_2-2,s_3+1)%v_1v_2^{-2}v_3\\
    \\
    &\qquad+(s_2-1)s_3\zeta_{MT,2}(s_1-1,s_2,s_3+1)%v_1^{-1}v_3
    +s_3(s_3+1)\zeta_{MT,2}(s_1-1,s_2-1,s_3+2)%v_1^{-1}v_2^{-1}v_3^{2}
    =0,
  \end{aligned}
  \\
  \label{eq:triv2}
  \begin{aligned}
    &(s_3(s_3+1)-2 s_1 s_3-2 s_2 s_3+2 s_1 s_2)\zeta_{MT,2}(s_1,s_2,s_3)%
    \\
    &\qquad
    +s_3 (s_1+2 s_2-2 s_3-2)\zeta_{MT,2}(s_1-1,s_2,s_3+1)% v_1^{-1} v_3 
    \\
    &\qquad
    +s_3 (2 s_1+s_2-2 s_3-2)\zeta_{MT,2}(s_1,s_2-1,s_3+1)% v_2^{-1} v_3
    \\
    &\qquad
    +s_1 s_3\zeta_{MT,2}(s_1+1,s_2-2,s_3+1)% v_1 v_2^{-2} v_3 
    +s_2 s_3\zeta_{MT,2}(s_1-2,s_2+1,s_3+1)% v_1^{-2} v_2 v_3 
    \\
    &\qquad
    +s_1 (s_2-2 s_3)\zeta_{MT,2}(s_1+1,s_2-1,s_3)% v_1 v_2^{-1}
    +s_2 (s_1-2 s_3)\zeta_{MT,2}(s_1-1,s_2+1,s_3)% v_1^{-1} v_2 
    \\
    &\qquad 
    -s_1 (2 s_2-s_3)\zeta_{MT,2}(s_1+1,s_2,s_3-1)% v_1 v_3^{-1}
    -s_2 (2 s_1-s_3)\zeta_{MT,2}(s_1,s_2+1,s_3-1)% v_2 v_3^{-1}
    \\
    &\qquad
    +s_3(s_3+1)\zeta_{MT,2}(s_1-2,s_2,s_3+2)% v_1^{-2}v_3^2 
    +s_3(s_3+1)\zeta_{MT,2}(s_1,s_2-2,s_3+2)% v_2^{-2} v_3^2 
    \\
    &\qquad
    +s_1 s_2\zeta_{MT,2}(s_1+1,s_2+1,s_3-2)% v_1 v_2 v_3^{-2}
    +2 s_3(s_3+1)\zeta_{MT,2}(s_1-1,s_2-1,s_3+2)% v_1^{-1} v_2^{-1} v_3^2
    =0.
  \end{aligned}
\end{gather}
The coefficients of $a$ and of $b$ give the same identity \eqref{eq:triv1}
(because of the symmetry of $s_1$ and $s_2$ in \eqref{Def-MT}), while
\eqref{eq:triv2} follows from the coefficient of $ab$.

However it should be noted that
each coefficient of $s_j$ in
\eqref{eq:triv1} and \eqref{eq:triv2} 
can be shown to be equal to 0 by partial fractional decompositions.
Hence these equations do not yield new relations.
Similarly in general cases,
it may be expected that only the constant term will give a non-trivial result.
\end{exa}

The following example can be regarded as a root-theoretic generalization of 
Example \ref{exa:MT2}, because $\zeta_{MT,2}(s_1,s_2,s_3)$ is the 
zeta-function of the root system of type $A_2$.

\begin{exa}
In the case of zeta-functions of root systems (cf. \cite{KMT10}), we have
 \begin{gather}
    (\xi_k)=(\xi_k)_{1\leqslant  k\leqslant  r},\qquad
    (\beta_\alpha)=(\langle\alpha^\vee,\rho\rangle)_{\alpha\in\Delta_+},
    \\
    (c_{m\alpha})=
    \begin{pmatrix}
      I_r & 0
    \end{pmatrix},
    \qquad
    (\gamma_{\alpha
k})=(\langle\alpha^\vee,\lambda_k\rangle)_{\alpha\in\Delta_+,1\leqslant 
k\leqslant  r},
  \end{gather}
where
$I_r$ is the $r\times r$ identity matrix,
$\Delta_+=\{\alpha_1,\ldots,\alpha_r,\ldots\}$ is the set of all positive
roots in a given root system, whose
first $r$ elements $\alpha_1,\ldots,\alpha_r$
are fundamental roots, $d=|\Delta_+|$, $\rho$ is the Weyl
vector, and $\lambda_1,\cdots,\lambda_r$ are fundamental weights.  
Thus
\begin{equation}
  \begin{split}
    G(\bs{u},\bs{v})&=
    \prod_{k=1}^r\Bigl(1-\delta(k)\Bigl( 
    1+\sum_{\alpha\in\Delta_+} c_{k\alpha} (v_\alpha^{-1}-\beta_\alpha)
    \Bigr)
    \Bigl(\sum_{\alpha\in\Delta_+} \gamma_{\alpha k}u_\alpha v_\alpha\Bigr)\Bigr)
    \\
    &=
    \prod_{k=1}^r\Bigl(1-\delta(k)\Bigl( 
    1+v_{\alpha_k}^{-1}-\langle\alpha_k^\vee,\rho\rangle
    \Bigr)
    \Bigl(\sum_{\alpha\in\Delta_+} \langle\alpha^\vee,\lambda_k\rangle u_\alpha v_\alpha\Bigr)\Bigr)
    \\
    &=
    \prod_{k=1}^r\Bigl(1-\delta(k)\sum_{\alpha\in\Delta_+} \langle\alpha^\vee,\lambda_k\rangle u_\alpha v_\alpha v_{\alpha_k}^{-1}
    \Bigr).
  \end{split}
\end{equation}
In particular, if
 $\xi_k=1$ ($1\leqslant  k\leqslant  r$), then
\begin{equation}
    G(\bs{u},\bs{v})
    =
    \prod_{k=1}^r\Bigl(1-\sum_{\alpha\in\Delta_+} \langle\alpha^\vee,\lambda_k\rangle u_\alpha v_\alpha v_{\alpha_k}^{-1}\Bigr).
\end{equation}

\end{exa}

%%%%%%%%%%%%%%%%%%%%%%%%%%%%%%%%%%%%%%%%%%%%%%%%%%%%%%%%%%%%%%%
\section{Special values of $\zeta_2^{\rm des}$ at any integer points}\label{sec-5}
%%%%%%%%%%%%%%%%%%%%%%%%%%%%%%%%%%%%%%%%%%%%%%%%%%%%%%%%%%%%%%%

The multiple zeta-function of Euler-Zagier type defined by \eqref{EZ-zeta}
%\begin{align}
%& \zeta_r(s_1,\ldots,s_r)=\sum_{m_1=1}^{\infty}\cdots \sum_{m_r=1}^{\infty}\prod_{j=1}^r(m_1+\cdots+m_j)^{-s_j}\label{EZ-zeta}
%\end{align}
can be meromorphically
continued to the whole complex space with many singularities (see \cite{AET01}).
In the case $r=2$, the singularities of $\zeta_2(s_1,s_2)$ are located on 
$$s_2=1,\ s_1+s_2=2,1,0,-2,-4,-6,\ldots$$
(\cite[Theorem 1]{AET01}), which implies that its special values of many integer points cannot be determined. 

Here we consider the desingularized double zeta-function of Euler-Zagier type defined by
$$\zeta^{\rm des}_2(s_1,s_2)=\zeta^{\rm des}_2(s_1,s_2;1,1;1,0,1,1;1,1)$$
in \eqref{def-MZF-2} with $(r,d)=(2,2)$. 
We showed in \cite[(4.3)]{FKMT1} that 
\begin{align}
  \zeta^{\rm des}_2(s_1,s_2)& =(s_1-1)(s_2-1)\zeta_2(s_1,s_2)\label{ex}\\
  & \quad +s_2(s_2+1-s_1)\zeta_2(s_1-1,s_2+1) -s_2(s_2+1)\zeta_2(s_1-2,s_2+2), \notag
\end{align}
which is entire. 
Therefore its special values of all integer points can be determined, though each
term on the right-hand side has singularities.
We give their explicit expressions as follows. Note that a part of the examples mentioned below were already introduced in \cite[Examples 4.7 and 4.9]{FKMT1} with no proof. 

First we consider the case $s_2\in \mathbb{Z}_{\leqslant  0}$. We prepare the following lemma.

\begin{lmm}\label{L-5-1} For $N\in \mathbb{N}_{0}$,% and $M\in \mathbb{N}$ with $M\geqslant  N+1$, 
\begin{align}
\zeta_2(s,-N)&=-\frac{1}{N+1}\zeta(s-N-1)+\sum_{k=0}^{N}\binom{N}{k}\zeta(s-N+k)\zeta(-k),  \label{E-5-1}\\
\zeta_2(-N,s) &=\frac{1}{N+1}\zeta(s-N-1)-\sum_{k=0}^{N}\binom{N}{k}\zeta(s-N+k)\zeta(-k) \label{E-5-2}\\
&\quad +\zeta(s)\zeta(-N)-\zeta(s-N) \notag
\end{align}
hold for $s\in \mathbb{C}$ except for singularities.
\end{lmm}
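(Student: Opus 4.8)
The plan is to derive \eqref{E-5-1} directly from the definition of the double Euler--Zagier zeta-function by a standard ``binomial expansion of the inner sum'' argument, and then obtain \eqref{E-5-2} from \eqref{E-5-1} by a harmonic-product (stuffle) relation. First I would start from
$$
\zeta_2(s,-N)=\sum_{m_1=1}^\infty\sum_{m_2=1}^\infty m_1^{-s}(m_1+m_2)^{N},
$$
which is the meromorphic continuation evaluated at $s_2=-N$ (so the ``inner'' variable contributes a polynomial factor). The key step is to expand $(m_1+m_2)^N$ by the binomial theorem, but in a way that produces convergent pieces: write $m=m_1+m_2$ so that for fixed $m_1$ the variable $m$ runs over integers $\geqslant m_1+1$, giving $\sum_{m>m_1} m^{N}\cdot(\text{something})$ — this is where one must be careful, since naive term-by-term manipulation diverges. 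The cleanest route is instead to use the known Mellin--Barnes / residue expansion already recorded in the excerpt: specialize the formula \eqref{a4} (with $r=2$, $\xi_1=\xi_2=1$, $\gamma_1=\gamma_2=1$) at $s_r=s_2=-N$, where the infinite Mellin--Barnes tail terminates because $\binom{-s_2}{k}=\binom{N}{k}$ vanishes for $k>N$, leaving exactly the finite sum $\sum_{k=0}^N\binom{N}{k}\zeta(s-N+k)\phi(-k,1)$ together with the pole term $X$ contributing $-\frac{1}{N+1}\zeta(s-N-1)$ (here $\delta(2)=1$, $\gamma_2=1$, and $\frac{1}{s_2-1}$ becomes $-\frac{1}{N+1}$). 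Since $\phi(-k,1)=\zeta(-k)$, this is precisely \eqref{E-5-1}.

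For \eqref{E-5-2}, I would invoke the harmonic product formula for double zeta-values/functions,
$$
\zeta_2(s_1,s_2)+\zeta_2(s_2,s_1)+\zeta(s_1+s_2)=\zeta(s_1)\zeta(s_2),
$$
valid as an identity of meromorphic functions (it holds on the region of absolute convergence and extends by analytic continuation). Setting $s_1=-N$, $s_2=s$ gives
$$
\zeta_2(-N,s)=\zeta(-N)\zeta(s)-\zeta(s-N)-\zeta_2(s,-N),
$$
and substituting \eqref{E-5-1} for $\zeta_2(s,-N)$ yields \eqref{E-5-2} after flipping signs. Both identities are then valid for all $s\in\mathbb{C}$ away from the poles of the zeta-factors appearing, by analytic continuation from any region where everything converges.

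The main obstacle is justifying the terminating-expansion step rigorously: one must be sure that evaluating the Mellin--Barnes representation \eqref{a4} at the non-positive integer $s_2=-N$ is legitimate (the representation was derived under $\Re s_2$ in a suitable range, then continued), and that the residue bookkeeping — the pole of $\Gamma(-z)$ at $z=0,1,\dots$ combined with the pole of $\Gamma(s_2+z)/\Gamma(s_2)$ structure and the pole of $\phi(-z,1)$ at $z=-1$ — produces exactly the stated finite sum with no leftover contour integral. In practice this is routine once one notes $\binom{-s_2}{k}\big|_{s_2=-N}=\binom{N}{k}=0$ for $k>N$, which kills the tail; but it is the one point where care is genuinely needed, so I would spell it out. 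An alternative, fully elementary derivation of \eqref{E-5-1} avoiding Mellin--Barnes altogether: regularize by writing $\zeta_2(s,w)$ for $\Re w$ large, expand $\sum_{m_2\geqslant 1}(m_1+m_2)^{-w}$, and analytically continue in $w$ to $w=-N$; I would present whichever of the two is shorter.
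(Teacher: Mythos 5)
Your route is essentially the paper's: the authors also prove \eqref{E-5-1} by specializing the Mellin--Barnes residue expansion of $\zeta_2$ (they quote Matsumoto's formula, reproduced as \eqref{ill}, which is exactly the $r=2$, $\xi_1=\xi_2=\gamma_1=\gamma_2=1$ case of \eqref{a4}) at $(s_1,s_2)=(s,-N)$, and they then deduce \eqref{E-5-2} from the same harmonic-product relation $\zeta_2(s_1,s_2)+\zeta_2(s_2,s_1)=\zeta(s_1)\zeta(s_2)-\zeta(s_1+s_2)$ that you use, extending to all $s$ by meromorphic continuation of both sides. So the structure of your argument matches theirs.

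There is, however, one point where your justification would not go through as written: you claim the leftover contour integral disappears because $\binom{-s_2}{k}\big|_{s_2=-N}=\binom{N}{k}=0$ for $k>N$ ``kills the tail.'' The vanishing of those binomial coefficients only shows that no \emph{new residue terms} appear if one pushes the contour beyond $\Re z=N+1-\varepsilon$; it says nothing about the integral along the line itself, and one cannot let $M\to\infty$ since the expansion is asymptotic, not convergent. The actual mechanism, used implicitly in the paper, is the prefactor $1/\Gamma(s_2)$ in front of the integral in \eqref{ill}: at $s_2=-N$ this factor vanishes, while the integral itself is finite there (with the choice $M=N+1$ one has $\Re(s_2+z)=1-\varepsilon>0$ on the contour, so no poles of $\Gamma(s_2+z)$ are encountered, provided $\Re(s_1+s_2)>1-M+\varepsilon$, i.e.\ roughly $\Re s>0$). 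Hence the remainder term is annihilated, \eqref{E-5-1} holds first in that restricted $s$-region, and then for all $s$ outside singularities by continuation --- which is exactly how the paper phrases it. With this correction your proof is complete; the pole term $\frac{1}{s_2-1}\zeta(s_1+s_2-1)\mapsto-\frac{1}{N+1}\zeta(s-N-1)$, the finite sum with $\phi(-k,1)=\zeta(-k)$, and the stuffle step are all handled correctly.
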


\begin{proof}
It follows from \cite[(4.4)]{Mat02} that 
\begin{align}
\zeta_2(s_1,s_2)&=\frac{1}{s_2-1}\zeta(s_1+s_2-1)+\sum_{k=0}^{M-1}\binom{-s_2}{k}\zeta(s_1+s_2+k)\zeta(-k)\label{ill}\\
& \ +\frac{1}{2\pi i\Gamma(s_2)}\int_{(M-\varepsilon)}\Gamma(s_2+z)\Gamma(-z)\zeta(s_1+s_2+z)\zeta(-z)dz \notag
\end{align}
for $M\in \mathbb{N}$ and $(s_1,s_2)\in \mathbb{C}^2$ with $\Re s_2>-M+\varepsilon$, $\Re (s_1+s_2)>1-M+\varepsilon$ for any small $\varepsilon>0$. 
Setting $(s_1,s_2)=(s,-N)$ and $M= N+1$ in \eqref{ill}, we see that 
\eqref{E-5-1} holds for any $s\in \mathbb{C}$ except for singularities because the both sides of \eqref{E-5-1} can be continued meromorphically to $\mathbb{C}$.
Next, using the well-known relation
$$\zeta_2(s_1,s_2)+\zeta_2(s_2,s_1)=\zeta(s_1)\zeta(s_2)-\zeta(s_1+s_2),$$
we can immediately obtain \eqref{E-5-2}.
\end{proof}

\begin{exa}
From \eqref{E-5-1} and \eqref{E-5-2}, we have
\begin{align}
& \zeta_2(s,0)=-\zeta(s-1)-\frac{1}{2}\zeta(s), \label{form-a}\\
& \zeta_2(s,-1)=-\frac{1}{2}\zeta(s-2)-\frac{1}{2}\zeta(s-1)-\frac{1}{12}\zeta(s),\label{form-b}\\
& \zeta_2(0,s)=\zeta(s-1)-\zeta(s), \label{form-1}\\
& \zeta_2(-1,s)=\frac{1}{2}\left\{\zeta(s-2)-\zeta(s-1)\right\}.\label{form-2}
\end{align}
\end{exa}

\begin{prp} \label{P-5-2} \ For $s\in \mathbb{C}$ and $N\in \mathbb{N}_{0}$, 
\begin{align}
&\zeta_2^{\rm des}(s,-N) =-\sum_{k=0}^{N}\binom{N}{k}(k+1)(s-N+k-1)\zeta(s-N+k)\zeta(-k). \label{E-5-3}
\end{align}
\end{prp}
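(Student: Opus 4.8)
The plan is to specialize the identity \eqref{ex} at $(s_1,s_2)=(s,-N)$ and to insert the expansions of Lemma \ref{L-5-1}. First I would note that both sides of \eqref{E-5-3} are entire functions of $s$: the left-hand side is the restriction to $s_2=-N$ of the entire function $\zeta_2^{\rm des}$ (Theorem \ref{thm:ann}), while on the right-hand side each $\zeta(s-N+k)$ has at worst a simple pole at $s=N-k+1$, which is killed by the accompanying factor $(s-N+k-1)$. Hence it is enough to prove \eqref{E-5-3} for $\Re s$ large, where all the ordinary double zeta-values below are finite.

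For $N\geqslant 2$ one substitutes $(s_1,s_2)=(s,-N)$ directly into \eqref{ex} and expands $\zeta_2(s,-N)$, $\zeta_2(s-1,-(N-1))$, $\zeta_2(s-2,-(N-2))$ by \eqref{E-5-1} (with the obvious shifts of both arguments). The term $\zeta(s-N-1)$ occurs in all three expansions, but its total coefficient is $(s-1)-(s+N-1)+N=0$; this is the ``marvelous cancellation'' responsible for the entireness. The surviving part is $\sum_{k=0}^{N}\zeta(s-N+k)\zeta(-k)\,c_k$ with
\begin{equation*}
  c_k=-(s-1)(N+1)\binom{N}{k}+N(s+N-1)\binom{N-1}{k}-N(N-1)\binom{N-2}{k},
\end{equation*}
and, using $\binom{N-1}{k}=\tfrac{N-k}{N}\binom{N}{k}$ and $\binom{N-2}{k}=\tfrac{(N-k)(N-1-k)}{N(N-1)}\binom{N}{k}$, one pulls out $\binom{N}{k}$ and is left with the bracket $-(s-1)(N+1)+(s+N-1)(N-k)-(N-k)(N-1-k)$; putting $a=N-k$ this collapses to $-(s-1)(N+1)+a(s+k)=(k+1)(N-s+1-k)=-(k+1)(s-N+k-1)$, which is exactly \eqref{E-5-3}.

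The cases $N=0,1$ need a small separate treatment, since then one of the shifted arguments lands on the polar hyperplane $s_2=1$ of the ordinary double zeta-function: for $N=0$ the factor $\zeta_2(s-1,s_2+1)$, and for $N=1$ the factor $\zeta_2(s-2,s_2+2)$, is singular at $s_2=-N$, while the polynomial prefactor $s_2(s_2+1-s)$ resp. $s_2(s_2+1)$ vanishes there. One therefore takes the limit $s_2\to -N$, picking up $\operatorname{Res}_{w=1}\zeta_2(\,\cdot\,,w)=\zeta(\,\cdot\,)$ times the remaining finite factor, and then a direct check against \eqref{form-a} and \eqref{form-b} confirms \eqref{E-5-3} for $N=0,1$. (Alternatively, all $N$ can be handled at once by expanding all three double zeta-functions through the Mellin--Barnes formula \eqref{ill} with $M=N+1$ before specializing $s_2=-N$: the three $\tfrac{1}{s_2-1},\tfrac{1}{s_2},\tfrac{1}{s_2+1}$-terms cancel with combined coefficient $(s-1)+(s_2+1-s)-s_2=0$, and each remainder integral either carries a factor $1/\Gamma(s_2)$, $1/\Gamma(s_2+1)$, $1/\Gamma(s_2+2)$ that vanishes at $s_2=-N$, or else is multiplied by a prefactor $s_2(s_2+1-s)$ or $s_2(s_2+1)$ that does.) The only real obstacle is keeping the bookkeeping of these cancellations straight --- aligning the three shifted expansions with their polynomial prefactors and resolving the $0\cdot\infty$ indeterminacies in the small cases; the rest is elementary algebra.
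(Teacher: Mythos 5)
Your proposal is correct and follows essentially the same route as the paper: specialize \eqref{ex} at $(s_1,s_2)=(s,-N)$, expand the three double zeta factors by \eqref{E-5-1}, observe the cancellation of the $\zeta(s-N-1)$ terms, and collapse the binomial combination to $-(k+1)(s-N+k-1)$, which is exactly the computation the paper sketches (it leaves the final binomial simplification to the reader). Your separate limit treatment of $N=0,1$ is a welcome extra precision, since the paper's blanket substitution of \eqref{E-5-1} into the shifted arguments is literally justified only for $N\geqslant 2$, and your check (or the uniform Mellin--Barnes variant via \eqref{ill}) confirms that the formal cancellation indeed reproduces the residue contributions in those boundary cases.
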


\begin{proof}
From \eqref{ex} we have
\begin{align*}
\zeta_2^{\rm des}(s,-N)&=(s-1)(-N-1)\zeta_2(s,-N)-N(-N+1-s)\zeta_2(s-1,-N+1)\\
& \ +N(-N+1)\zeta_2(s-2,-N+2).
\end{align*}
Substituting \eqref{E-5-1} with $(s,-N)$, $(s-1,-N+1)$ and $(s-2,-N+2)$ %and $M=N+1$ 
into the right-hand side of the above equation, we have
\begin{align*}
\zeta_2^{\rm des}(s,-N)&=\sum_{k=0}^N \bigg\{ (s-1)(-N-1)\binom{N}{k}-N(-N+1-s)\binom{N-1}{k}\\
& \qquad +N(-N+1)\binom{N-2}{k}\bigg\}\zeta(s-N+k)\zeta(-k),
\end{align*}
and the right-hand side of the above formula can be transformed to the right-hand side of \eqref{E-5-3}.
\end{proof}

\ 

\begin{exa}\label{Ex-5-3}
Setting $N=3$ in \eqref{E-5-3}, we obtain
\begin{equation}
\zeta_2^{\rm des}(s,-3)=\frac{s-4}{2}\zeta(s-3)+\frac{s-3}{2}\zeta(s-2)-\frac{s-1}{30}\zeta(s). \label{E-5-4}
\end{equation}
For example, 
\begin{align*}
& \zeta_2^{\rm des}(1,-3)=\frac{1}{20},\qquad\qquad\qquad
\zeta_2^{\rm des}(2,-3)=\frac{1}{3}-\frac{1}{30}\zeta(2),\\
& \zeta_2^{\rm des}(3,-3)=\frac{3}{4}-\frac{1}{15}\zeta(3),\qquad\;
 \zeta_2^{\rm des}(4,-3)=\frac{1}{2}+\frac{1}{2}\zeta(2)-\frac{1}{10}\zeta(4).
\end{align*}
Also we have
\begin{align*}
 \zeta_2^{\rm des}(0,0)=\frac{1}{4},\quad
 \zeta_2^{\rm des}(-1,-1)=\frac{1}{36},\quad
 \zeta_2^{\rm des}(0,-2)=\frac{1}{18}.
\end{align*}
\end{exa}

\begin{prp} \label{P-5-4} \ For $s\in \mathbb{C}$ and $N\in \mathbb{N}_{0}$, 
\begin{align}
& \zeta_2^{\rm des}(-N,s) \label{E-5-5}\\
& =\frac{(s-N-3)(s-N-2)}{(N+3)(N+2)}\zeta(s-N-1)\notag\\
& \ +\sum_{k=0}^{N+1}\frac{(ks+N-k+2)(s-N+k-1)}{N+2}\binom{N+2}{k}\zeta(s-N+k)\zeta(-k)\notag\\
& \ -(N+1)(s-1)\zeta(s)\zeta(-N)+s(s+1+N)\zeta(s+1)\zeta(-N-1)\notag\\
& \ +(s-N-1)\zeta(s-N).\notag
\end{align}
\end{prp}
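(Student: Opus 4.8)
The plan is to proceed exactly as in the proof of Proposition \ref{P-5-2}: start from the explicit desingularization formula \eqref{ex}, then substitute the evaluation \eqref{E-5-2} of $\zeta_2(-N,s)$ from Lemma \ref{L-5-1} (with the three shifted arguments dictated by \eqref{ex}), and finally collect terms. Concretely, setting $s_1=-N$, $s_2=s$ in \eqref{ex} gives
\begin{align*}
\zeta_2^{\rm des}(-N,s)&=(-N-1)(s-1)\zeta_2(-N,s)+s(s+1+N)\zeta_2(-N-1,s+1)\\
&\quad-s(s+1)\zeta_2(-N-2,s+2).
\end{align*}
Now I would insert \eqref{E-5-2} three times: for $\zeta_2(-N,s)$, for $\zeta_2(-N-1,s+1)$, and for $\zeta_2(-N-2,s+2)$. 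Each substitution contributes (i) a term $\frac{\pm1}{N+\bullet}\zeta(s-N-1)$ coming from the leading $\frac{1}{N+1}\zeta(s_1+s_2-1)$ piece, all three of which have the \emph{same} argument $\zeta(s-N-1)$ after the shifts; (ii) a finite sum $\sum_k\binom{\bullet}{k}\zeta(s-N+k)\zeta(-k)$, again with arguments aligned after the shifts; and (iii) the "correction" terms $\zeta(s_2)\zeta(-N')-\zeta(s_1+s_2)$, which here become combinations of $\zeta(s)\zeta(-N)$, $\zeta(s+1)\zeta(-N-1)$, $\zeta(s-N)$ and the like.

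The bookkeeping then splits into four packets, matching the four groups of terms on the right-hand side of \eqref{E-5-5}. First, the coefficient of $\zeta(s-N-1)$ is
$\frac{-N-1}{N+1}(s-1)\cdot(-1)\cdot(\text{sign})$ plus the analogous pieces from the other two substitutions, and one checks this collapses to $\frac{(s-N-3)(s-N-2)}{(N+3)(N+2)}$; this is a short rational-function identity in $N$ and $s$. Second, the triple sum $\sum_k\big[\text{three binomial coefficients with polynomial prefactors}\big]\zeta(s-N+k)\zeta(-k)$ has to be reorganized with a common upper index $N+2$ (extending ranges harmlessly, since $\binom{N-1}{k}$, $\binom{N}{k}$ vanish beyond their range), and the bracket simplifies to $\frac{(ks+N-k+2)(s-N+k-1)}{N+2}\binom{N+2}{k}$ — the same kind of binomial-coefficient manipulation as at the end of the proof of Proposition \ref{P-5-2}, using $\binom{N-1}{k}=\frac{N-k}{N}\binom{N}{k}$ etc. Third, the correction terms $\zeta(s)\zeta(-N)$ and $\zeta(s+1)\zeta(-N-1)$ and $\zeta(s+2)\zeta(-N-2)$ get multiplied by $(-N-1)(s-1)$, $s(s+1+N)$, $-s(s+1)$ respectively; one uses $\zeta(-N-2)=0$ for $N\geqslant 0$ (even negative argument) to kill the last, leaving exactly $-(N+1)(s-1)\zeta(s)\zeta(-N)+s(s+1+N)\zeta(s+1)\zeta(-N-1)$. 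Fourth, the terms $-\zeta(s-N)$, $-\zeta(s-N-1+1)$, $-\zeta(s-N-2+2)$ combine with the same $(-N-1)(s-1)$, $s(s+N+1)$, $-s(s+1)$ prefactors: the first gives $-(-N-1)(s-1)\zeta(s-N)$, the other two give $\zeta(s-N)$ arguments as well (after the shift $\zeta(s_1+s_2)$ with $s_1+s_2 = -N-1+s+1 = s-N$ and $-N-2+s+2 = s-N$), so all three are multiples of $\zeta(s-N)$, and the total coefficient is checked to be $(s-N-1)$; note also the $\zeta(s-N-1)$ that appears here must be merged into packet one.

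The main obstacle is purely the arithmetic of packet two: verifying that the polynomial-times-binomial combination
\[
(-N-1)(s-1)\binom{N+2}{k}+s(s+N+1)\binom{N+1}{k}\cdot(\text{shift})-s(s+1)\binom{N}{k}\cdot(\text{shift})
\]
— with the understanding that the "shift" reindexes $k$ appropriately so that all three $\zeta$-factors read $\zeta(s-N+k)$ — telescopes to $\frac{(ks+N-k+2)(s-N+k-1)}{N+2}\binom{N+2}{k}$. One should be careful that after aligning arguments the dummy $k$ in the second and third sums is shifted, so the binomial coefficients really are $\binom{N+1}{k}$ and $\binom{N}{k}$ (not $\binom{N+1}{k-1}$, etc.), and that the polynomial prefactors $(s+N-1)\to(s-N+k-1)$ track correctly through $\zeta(s-N+k)=\zeta((s-2)-(N-2)+k)$ and so on. Apart from this, the argument is a direct substitution plus routine simplification, and I would present it in that order: state the specialization of \eqref{ex}, substitute \eqref{E-5-2} three times, then verify the four coefficient identities, remarking that $\zeta(-N-2)=0$ disposes of one would-be extra term.
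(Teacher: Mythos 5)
Your overall route is the same as the paper's: specialize \eqref{ex} at $(s_1,s_2)=(-N,s)$, substitute \eqref{E-5-2} for $\zeta_2(-N,s)$, $\zeta_2(-N-1,s+1)$, $\zeta_2(-N-2,s+2)$, and collect four packets of terms. However, there is a genuine gap in your treatment of the third substitution. You dispose of the correction term $-s(s+1)\zeta(s+2)\zeta(-N-2)$ by asserting that $\zeta(-N-2)=0$ for all $N\geqslant 0$; this is false, since $-N-2$ is a negative \emph{even} integer only when $N$ is even (e.g.\ for $N=1$ one has $\zeta(-3)=\tfrac{1}{120}\neq 0$). The correct mechanism, and the one the paper's proof explicitly points out, is a cancellation: the substitution for $\zeta_2(-N-2,s+2)$ produces a sum over $0\leqslant k\leqslant N+2$, whose $k=N+2$ term contributes $+s(s+1)\zeta(s+2)\zeta(-N-2)$ (note that the unified coefficient $\tfrac{(ks+N-k+2)(s-N+k-1)}{N+2}\binom{N+2}{k}$ evaluated at $k=N+2$ is exactly $s(s+1)$), and this cancels the correction term $-s(s+1)\zeta(s+2)\zeta(-N-2)$; that cancellation is the reason the sum in \eqref{E-5-5} stops at $k=N+1$. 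With your argument as written, for odd $N$ either the $k=N+2$ term of the extended sum survives uncancelled (giving a spurious extra term), or you reach the stated formula only by discarding two nonzero terms through the same false identity, which happen to compensate.

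Two smaller points: in your displayed ``main obstacle'' combination the binomial coefficients are paired with the wrong prefactors --- $(-N-1)(s-1)$ multiplies $\zeta_2(-N,s)$ and hence goes with $\binom{N}{k}$, while $-s(s+1)$ multiplies $\zeta_2(-N-2,s+2)$ and goes with $\binom{N+2}{k}$ (and the middle one is $\binom{N+1}{k}$, not $\binom{N}{k}$). Moreover, no reindexing of the dummy $k$ is needed at all: each of the three substitutions already yields terms of the form $\zeta(s-N+k)\zeta(-k)$ with the same $k$, so the worry about shifted binomials is unfounded. The coefficient identities in your packets one, two and four are otherwise of the routine kind the paper intends, but the argument needs the cancellation above, not the claim $\zeta(-N-2)=0$.
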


\begin{proof}
From \eqref{ex}, we have
\begin{align*}
\zeta_2^{\rm des}(-N,s)&=(-N-1)(s-1)\zeta_2(-N,s)+s(s+1+N)\zeta_2(-N-1,s+1)\\
& \ -s(s+1)\zeta_2(-N-2,s+2).
\end{align*}
Similar to the proof of Proposition \ref{P-5-2}, 
substituting \eqref{E-5-2} with $(-N,s)$, $(-N-1,s+1)$ and $(-N-2,s+2)$ % and $M=N+3$ 
into the right-hand side of the above equation, we can obtain \eqref{E-5-5}.
Note that, in this case, we apply \eqref{E-5-2} with the sum on the right-hand side 
from 0 to $N+2$, but the term corresponding to $k=N+2$ is canceled and does not
appear in the final statement.
\end{proof}

\begin{exa}\label{Ex-5-5}
Setting $N=1$ in \eqref{E-5-5}, we have
\begin{equation}
\zeta_2^{\rm des}(-1,s)=\frac{(s-4)(s-3)}{12}\zeta(s-2)+\frac{s-2}{2}\zeta(s-1)-\frac{s(s-1)}{12}\zeta(s).\label{E-5-6}
\end{equation}
For example, 
\begin{align*}
\zeta_2^{\rm des}(-1,1)& %=\frac{6}{12}\zeta(-1)-\frac{1}{2}\zeta(0)-\frac{1}{12}
=\frac{1}{8},\qquad
\zeta_2^{\rm des}(-1,2) %=\frac{2}{12}\zeta(0)+\frac{1}{2}-\frac{2}{12}\zeta(2)
=\frac{5}{12}-\frac{1}{6}\zeta(2),\\
\zeta_2^{\rm des}(-1,3)& =-\frac{1}{12}+\frac{1}{2}\zeta(2)-\frac{1}{2}\zeta(3).
\end{align*}
\end{exa}

Next we consider $\zeta_2^{\rm des}(N,1)$ $(N\in \mathbb{N})$. 
From \eqref{ex} with $s_1=N\in \mathbb{Z}_{>1}$ and $s_2\to 1$, we have
$$\zeta_2^{\rm des}(N,1)=(N-1)\lim_{s_2\to 1}(s_2-1)\zeta_2(N,s_2)+(2-N)\zeta_2(N-1,2)-2\zeta_2(N-2,3).$$
We know from Arakawa and Kaneko \cite[Proposition 4]{AK99} that
\begin{align}
\zeta_2(N,s)& =\frac{\zeta(N)}{s-1}+O(1) \quad (N\in \mathbb{Z}_{>1}).\label{E-5-7}
%\Gamma(s)\zeta_2(1,s)&=\frac{1}{s-1}+O(s-1). \label{E-5-8}
\end{align}
Thus we obtain the following.

\begin{prp}\label{P-5-6}
For $N\in \mathbb{N}_{>1}$,
\begin{align}
\zeta_2^{\rm des}(N,1)&=(N-1)\zeta(N)+(2-N)\zeta_2(N-1,2)-2\zeta_2(N-2,3). \label{E-5-9}
\end{align}
\end{prp}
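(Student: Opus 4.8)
The plan is to start from the known decomposition \eqref{ex} of $\zeta_2^{\rm des}(s_1,s_2)$ as a linear combination of three ordinary double zeta-functions, and specialize it at $s_1=N\in\mathbb{Z}_{>1}$, $s_2\to 1$. First I would substitute $s_1=N$ and keep $s_2$ as a variable tending to $1$ in
\begin{align*}
\zeta_2^{\rm des}(N,s_2)&=(N-1)(s_2-1)\zeta_2(N,s_2)\\
&\quad+s_2(s_2+1-N)\zeta_2(N-1,s_2+1)-s_2(s_2+1)\zeta_2(N-2,s_2+2).
\end{align*}
Since the left-hand side is entire (Theorem \ref{thm:ann}), the limit $s_2\to 1$ exists, so I only need to evaluate the right-hand side term by term in that limit.

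The second and third terms on the right are harmless: at $s_2=1$ the arguments become $(N-1,2)$ and $(N-2,3)$ respectively, and by the location of singularities of $\zeta_2$ (on $s_2=1$, $s_1+s_2=2,1,0,-2,-4,\dots$, from \cite[Theorem 1]{AET01}) one checks that $\zeta_2$ is holomorphic at both of these points when $N>1$ — indeed $(N-1,2)$ has second argument $2\ne 1$ and argument sum $N+1\ge 3$, and similarly for $(N-2,3)$ — so these terms simply contribute $(2-N)\zeta_2(N-1,2)$ and $-2\zeta_2(N-2,3)$. The first term is the only delicate one: $\zeta_2(N,s_2)$ has a simple pole at $s_2=1$, but it is multiplied by the vanishing factor $(s_2-1)$, so the contribution is $(N-1)\lim_{s_2\to 1}(s_2-1)\zeta_2(N,s_2)$, i.e. $(N-1)$ times the residue of $\zeta_2(N,\cdot)$ at $s_2=1$.

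Thus the whole argument reduces to identifying that residue, and here I would invoke the result of Arakawa and Kaneko \cite[Proposition 4]{AK99} quoted as \eqref{E-5-7}, which says $\zeta_2(N,s)=\zeta(N)/(s-1)+O(1)$ for $N\in\mathbb{Z}_{>1}$; hence $\lim_{s_2\to1}(s_2-1)\zeta_2(N,s_2)=\zeta(N)$. Combining the three contributions gives exactly \eqref{E-5-9}. The main (and essentially only) obstacle is the first term — one must be sure that the $O(1)$ part of $\zeta_2(N,s_2)$, once multiplied by $(s_2-1)$, vanishes in the limit, which is immediate from \eqref{E-5-7}; everything else is routine bookkeeping with \eqref{ex} and the explicit singularity locus of $\zeta_2$. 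No new ideas beyond \eqref{ex} and \eqref{E-5-7} are needed.
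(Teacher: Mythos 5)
Your proposal is correct and follows essentially the same route as the paper: specialize \eqref{ex} at $s_1=N$, let $s_2\to 1$ using the entireness of $\zeta_2^{\rm des}$, and evaluate the residue of the first term via the Arakawa--Kaneko expansion \eqref{E-5-7}, with the other two terms evaluated directly at the regular points $(N-1,2)$ and $(N-2,3)$. Your extra check of holomorphy at those points against the singularity locus of \cite{AET01} is a harmless addition the paper leaves implicit.
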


\begin{exa}\label{Ex-5-7}
Using well-known results for double zeta-values, we obtain
\begin{align*}
\zeta_2^{\rm des}(2,1)&=\zeta(2)-2\zeta_2(0,3)=2\zeta(3)-\zeta(2),\\
\zeta_2^{\rm des}(3,1)&=2\zeta(3)-\zeta_2(2,2)-2\zeta_2(1,3)=2\zeta(3)-\frac{5}{4}\zeta(4),\\
\zeta_2^{\rm des}(4,1)&=3\zeta(4)-2\zeta_2(3,2)-2\zeta_2(2,3)=3\zeta(4)+2\zeta(5)-2\zeta(2)\zeta(3),
\end{align*}
where we note $\zeta_2(0,3)=\zeta(2)-\zeta(3)$. 
\end{exa}

The case $N=1$ should be treated separately.

\begin{prp}\label{P-5-9}
$$\zeta^{\rm des}_2(1,1)=\frac{1}{2}.$$
\end{prp}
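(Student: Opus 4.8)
The plan is to specialize the identity \eqref{ex} at $s_1=1$ \emph{before} letting $s_2\to1$, so that only a single one-variable limit survives. For $s_2$ in the half-plane $\Re s_2>1$ the point $(1,s_2)$ lies on none of the singular hyperplanes $s_2=1$, $s_1+s_2=2,1,0,-2,-4,\dots$ of $\zeta_2$, so $\zeta_2(1,s_2)$ is holomorphic there; hence the first term $(s_1-1)(s_2-1)\zeta_2(s_1,s_2)$ of \eqref{ex} vanishes identically on the slice $s_1=1$. Putting $s_1=1$ in the remaining two terms gives
\begin{equation*}
\zeta^{\rm des}_2(1,s_2)=s_2^2\,\zeta_2(0,s_2+1)-s_2(s_2+1)\,\zeta_2(-1,s_2+2),
\end{equation*}
valid first for $\Re s_2>1$ and then, since the left-hand side is entire and the right-hand side meromorphic, for all $s_2$ by analytic continuation.

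Next I would insert the closed forms for $\zeta_2(0,\cdot)$ and $\zeta_2(-1,\cdot)$ furnished by Lemma \ref{L-5-1}: by \eqref{form-1} and \eqref{form-2} (equivalently, \eqref{E-5-2} with $N=0$ and $N=1$),
\begin{equation*}
\zeta_2(0,s_2+1)=\zeta(s_2)-\zeta(s_2+1),\qquad
\zeta_2(-1,s_2+2)=\tfrac12\bigl(\zeta(s_2)-\zeta(s_2+1)\bigr).
\end{equation*}
Substituting these and collecting the scalar coefficient $s_2^2-\tfrac12 s_2(s_2+1)=\tfrac12 s_2(s_2-1)$ should yield the clean one-variable formula
\begin{equation*}
\zeta^{\rm des}_2(1,s_2)=\frac{s_2(s_2-1)}{2}\bigl(\zeta(s_2)-\zeta(s_2+1)\bigr).
\end{equation*}

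Finally I would let $s_2\to1$. Since $\zeta(s)$ has a simple pole at $s=1$ of residue $1$ while $\zeta(s_2+1)\to\zeta(2)$ is finite, the vanishing factor $s_2(s_2-1)/2$ exactly cancels the pole, giving
\begin{equation*}
\zeta^{\rm des}_2(1,1)=\lim_{s_2\to1}\frac{s_2(s_2-1)}{2}\bigl(\zeta(s_2)-\zeta(s_2+1)\bigr)=\frac12.
\end{equation*}
The one point needing care — and the reason $N=1$ is split off from Proposition \ref{P-5-6} — is the very first step: a blind substitution of $(s_1,s_2)=(1,1)$ into \eqref{ex} produces an indeterminate $0\cdot\infty$ in \emph{all three} terms at once, because the hyperplanes $s_2=1$ and $s_1+s_2=2$ both pass through $(1,1)$. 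Restricting to $s_1=1$ first disentangles this, and one must verify (as sketched above) that $\zeta_2(1,s_2)$ really is regular on that slice for $\Re s_2>1$, so that the $(s_1-1)$-factor genuinely kills the first term rather than meeting a pole; the remaining manipulations are elementary.
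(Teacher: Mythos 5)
Your proof is correct and follows essentially the same route as the paper: both arguments kill the first term of \eqref{ex} by specializing $s_1\to1$ where $\zeta_2(s_1,s_2)$ is regular, then reduce the remaining two terms via \eqref{form-1} and \eqref{form-2} to $\frac{s_2(s_2-1)}{2}\bigl(\zeta(s_2)-\zeta(s_2+1)\bigr)$ and let $s_2\to1$ using the simple pole of $\zeta$ at $1$. Your explicit one-variable identity for $\zeta^{\rm des}_2(1,s_2)$ is just a repackaging of the computation the paper performs inside its iterated limit, with the same justification (entirety of $\zeta^{\rm des}_2$, regularity of $\zeta_2(1,s_2)$) for why the order of specialization is harmless.
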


\begin{proof}
Denote the first, the second and the third term on the right-hand side of \eqref{ex} by $I_1,I_2$ and $I_3$, respectively. Setting $M=1$ in \eqref{ill}, we have
\begin{equation}
\lim_{s_2\to 1}\lim_{s_1\to 1}I_1=\lim_{s_2\to 1}(s_2-1)\lim_{s_1\to 1}(s_1-1)\zeta_2(s_1,s_2)=0. \label{I1}
\end{equation}
Using \eqref{form-1} and \eqref{form-2}, we obtain
\begin{align}
\lim_{s_2\to 1}\lim_{s_1\to 1}(I_2+I_3)&=\lim_{s_2 \to 1}\left\{s_2^2\zeta_2(0,s_2+1) -s_2(s_2+1)\zeta_2(-1,s_2+2)\right\}\label{I2}\\
& =\lim_{s_2 \to 1}\left(s_2^2-\frac{s_2(s_2+1)}{2}\right)\{\zeta(s_2)-\zeta(s_2+1)\}\notag\\
& =\lim_{s_2 \to 1}\frac{s_2}{2}(s_2-1)\{\zeta(s_2)-\zeta(s_2+1)\}=\frac{1}{2}. \notag
\end{align}
From \eqref{I1} and \eqref{I2}, we obtain the assertion.
Note that, since $\zeta_2^{\rm des}(s_1,s_2)$ is entire, the final result does
not depend on the choice how to take the limit.
\end{proof}

%%%%%%%%%%%%%%%%%%%%%%%%%%%%%%%%%%%%%%%%%%%%%%%%%%%%%%%%%%%%%%%%%%%%%%%%%%%%%%%%
%\section{??}
%%%%%%%%%%%%%%%%%%%%%%%%%%%%%%%%%%%%%%%%%%%%%%%%%%%%%%%%%%%%%%%%%%%%%%%%%%%%%%%%
%
%From Lemma \ref{lm:conv}, we easily obtain the following by the dominated convergence th%eorem.
%\begin{thm}
%$\zeta_r^{\rm des}((s_j);(\xi_k);(\gamma_{jk});(\beta_j))$
%is continuous in
%$(s_j),(\xi_k)$. %,(\beta_j)$.
%\end{thm}
%In particular, if $\xi_k\neq 1$ for all $k$, then $\zeta_r^{\rm des}((s_j);(\xi_k);(\gam%ma_{jk});(\beta_j))=\zeta_r((s_j);(\xi_k);(\gamma_{jk});(\beta_j))$.
%Hence
%\begin{crl}
%For $\xi_k\neq 1$,
%$\zeta_r((s_j);(\xi_k);(\gamma_{jk});(\beta_j))$ is continuous in
%$(s_j),(\xi_k)$. % ,(\beta_j)$.  
%\end{crl}

%%%%%%%%%%%%%%%%%%%%%%%%%%%%%%%%%%%%%%%%%%%%%%%%%%%%%%%%%%%%%%%%%%%%%%%%%%%%%%%%
\section{$p$-adic multiple star polylogarithm for indices with arbitrary integers}\label{pMLF=pMPL}
%%%%%%%%%%%%%%%%%%%%%%%%%%%%%%%%%%%%%%%%%%%%%%%%%%%%%%%%%%%%%%%%%%%%%%%%%%%%%%%%

Now we proceed to our second main topic of the present paper.
Our aim is to extend the result  of
\cite[Theorem 3.41]{FKMT2} to the case of indices with 
arbitrary (not necessarily all positive) integers 
%that are not necessarily all positive integers 
(Theorem \ref{L-Li theorem-2}), which is a $p$-adic analogue of the equation \eqref{MPL-03}.

First we prepare ordinary notation. For a prime number $p$, let $\mathbb{Z}_p$, $\mathbb{Q}_p$, $\overline{\mathbb{Q}}_p$ and $\mathbb{C}_p$ 
%${\mathcal O}_{\mathbb{C}_p}$ and ${\frak M}_{\mathbb{C}_p}$
be the set of $p$-adic integers, $p$-adic numbers, the algebraic closure of $\mathbb{Q}_p$ and the $p$-adic completion of $\overline{\mathbb{Q}}_p$ 
%the ring of integers of $\mathbb{C}_p$ and its maximal ideal, 
respectively.
For $a$ in $\textbf{P}^{1}(\mathbb{C}_p)\left(={\mathbb{C}}_p\cup \{\overline{ \infty}\}\right)$, $\bar a$ means the image ${\rm red}(a)$ by the reduction map 
${\rm red}:\textbf{P}^{1}(\mathbb{C}_p) \to \textbf{P}^{1}({\overline{\mathbb{F}}_p})\left(=\overline{\mathbb{F}}_p\cup \{\overline{ \infty}\}\right),$
where $\overline{\mathbb{F}}_p$ is the algebraic closure of ${\mathbb{F}}_p$. 
Denote by $|\cdot |_p$ the $p$-adic absolute value, and by $\mu_c$ 
the group of $c$\,th roots of unity in $\mathbb{C}_p$ for $c\in\mathbb{N}$. 
We put $q=p$ if $p\neq 2$ and $q=4$ if $p=2$.
We denote by $\omega:\mathbb Z_p^\times\to \mathbb Z_p^\times$  the Teichm\"{u}ller character and define
$\langle x\rangle:=x/\omega(x)$ for $x\in\mathbb Z_p^\times$.

%Here we recall the $p$-adic multiple $L$-function with $\gamma_j=1$ can be defined 
%\begin{dfn} \label{Def-pMLF}
We recall that,
for $r\in \mathbb{N}$, $k_1,\ldots,k_r\in \mathbb{Z}$
and $c\in \mathbb{N}_{>1}$ with $(c,p)=1$,
the {\bf $p$-adic multiple $L$-function} of depth $r$, %is 
%defined in \cite{FKMT2}
%to be the 
a $\mathbb{C}_p$-valued function
on  
$$(s_j)\in \mathfrak{X}_r\left(q^{-1}\right)
:=\left\{(s_1,\ldots,s_r)\in \mathbb{C}_p^r\,\big|\,|s_j|_p<qp^{-1/(p-1)}\ (1\leqslant  j\leqslant  r)\right\},$$
is defined in \cite{FKMT2} by
%\begin{equation}
\begin{align*}
&{L_{p,r}(s_1,\ldots,s_r;\omega^{k_1},\ldots,\omega^{k_r};c)}\\
&\quad :=\int_{\left( \mathbb{Z}_p^r\right)'%_{\{\gamma_j\}}
}\langle x_1 \rangle^{-{s_1}}\langle x_1+ x_2\rangle^{-{s_2}}\cdots \langle \sum_{j=1}^{r}x_{j} \rangle^{-{s_r}}
% \qquad \qquad \times 
\omega^{k_1}(x_1)\cdots\omega^{k_r}( \sum_{j=1}^{r}x_{j}) \prod_{j=1}^{r}d\mm(x_j),
\end{align*}
%\label{e-6-1}
%\end{equation}
where
$\left( \mathbb{Z}_p^r\right)'
:=\bigg\{ (x_j)\in \mathbb{Z}_p^r \,\bigg|\,p\nmid x_1,\ p\nmid (x_1+x_2),\ldots,\ p\nmid \sum_{j=1}^{r}x_j\,\bigg\}$,
and $\mm$ is the $p$-adic measure given in \cite[\S 1]{FKMT2}.
The function is equal to 
$L_{p,r}(s_1,\ldots,s_r;\omega^k_1,\ldots,\omega^k_r;1,\ldots,1;c)$ in \cite[Definition 1.16]{FKMT2}. When $r=1$, we have
\begin{equation}
L_{p,1}(s;\omega^{k-1};c)=(\langle c \rangle^{1-s}\omega^{k}(c)-1)L_p(s;\omega^{k}), \label{KL-pMLF}
\end{equation}
where $L_p(s;\omega^{k})$ is the Kubota-Leopoldt $p$-adic $L$-function (see \cite[Example 1.19]{FKMT2}). 
%\end{dfn}

%The following is the extension of 
The $p$-adic rigid TMSPL can be defined for  indices with arbitrary integers 
%that are not necessarily all positive integers 
in the same way as \cite[Definition 3.4]{FKMT2}:
%\begin{dfn}\label{def of pMMPL}
Let $n_1,\dots,n_r\in \mathbb{Z}$ and $\xi_1,\dots,\xi_{r}\in\mathbb{C}_p$ with $|\xi_j|_p\leqslant  1$ ($1\leqslant  j\leqslant  r$).
The {\bf $p$-adic rigid TMSPL}
\footnote{
TMSPL stands for the twisted multiple star polylogarithm.
Here 'star' means that we add equalities in the running indices of the summation.
}
%$\ell^{(p)}_{n_1,\dots,n_r}(\xi_1,\dots,\xi_{r};z)$ 
is defined by the following 
$p$-adic power series:
\begin{equation}\label{series expression for ell}
\ell^{(p),\star}_{n_1,\dots,n_r}(\xi_1,\dots,\xi_{r},z):=
\underset{(k_1,p)=\cdots=(k_r,p)=1}{\underset{0<k_1\leqslant\cdots\leqslant k_{r}}{\sum}}
\frac{\xi_1^{k_1}\cdots\xi_r^{k_r}}{k_1^{n_1}\cdots k_r^{n_r}}z^{k_r}
\end{equation}
which converges for $z\in ]\bar{0}[=\{x\in\mathbb{C}_p\bigm| \ |x|_p<1\}$ 
by $|\xi_j|_p\leqslant  1$ for $1\leqslant  j\leqslant  r$.
%\end{dfn}

When $|\xi_j|_p= 1$ for all $1\leqslant  j\leqslant  r$,
by the completely same  way as the arguments in \cite[\S 3]{FKMT2}, 
we can show that it can be extended to a rigid analytic function
(consult \cite[\S 3.1]{FKMT2})
on ${\bf P}^1({\mathbb C}_p) - ]S[$ 
with
\begin{equation}\label{S0}
S:=\{%\overline{1},
\overline{\xi_{r}^{-1}},\overline{(\xi_{r-1}\xi_{r})^{-1}},
\dots,\overline{(\xi_1\cdots\xi_{r})^{-1}}\}.
\end{equation}
Namely, 
$$
\ell^{(p),\star}_{n_1,\dots,n_r}(\xi_1,\dots,\xi_{r};z)
\in A^{\rm{rig}}( {\bf P}^1\setminus S
).
$$
We also note that
\begin{equation}\label{l0}
\ell^{(p),\star}_{n_1,\dots,n_r}(\xi_1,\dots,\xi_{r};0)
=\ell^{(p),\star}_{n_1,\dots,n_r}(\xi_1,\dots,\xi_{r};\infty)=0,
\end{equation}
and the following equality:

\begin{prp}\label{L-ell prop}
For  $n_1,\dots,n_r\in \mathbb{Z}$
and $c\in \mathbb{N}_{>1}$ with $(c,p)=1$,
\begin{equation*}\label{L-ell-formula}
L_{p,r}(n_1,\dots,n_r;\omega^{-n_1},\dots,\omega^{-n_r};c)= \\
\underset{\xi_1\cdots\xi_r\neq 1, \ \dots, \ \xi_{r-1}\xi_r\neq 1, \ \xi_r\neq 1}{\sum_{\xi_1^c=\cdots=\xi_r^c=1}}
\ell^{(p),\star}_{n_1,\dots,n_r}(\xi_1,\dots,\xi_r;1).
\end{equation*}
\end{prp}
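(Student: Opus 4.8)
The plan is to follow the proof of the positive-integer counterpart \cite[Theorem 3.41]{FKMT2} and to check that it uses the positivity of the indices nowhere. The first step is to turn the left-hand side into a bare $p$-adic integral. Since $\langle y\rangle=y/\omega(y)$, for every $y\in\mathbb{Z}_p^\times$ and every $n\in\mathbb{Z}$ one has $\langle y\rangle^{-n}\omega^{-n}(y)=y^{-n}$; hence, specialising $s_j=n_j$ and $k_j=-n_j$ in the integral representation of $L_{p,r}$ --- which is meaningful at every integer point, the integrand there reducing to $\prod_{j=1}^{r}(\sum_{i=1}^{j}x_i)^{-n_j}$, a continuous $\mathbb{C}_p$-valued function on $(\mathbb{Z}_p^r)'$ for arbitrary $n_j\in\mathbb{Z}$ --- one gets
\begin{equation*}
L_{p,r}(n_1,\dots,n_r;\omega^{-n_1},\dots,\omega^{-n_r};c)
=\int_{(\mathbb{Z}_p^r)'}\prod_{j=1}^{r}\Bigl(\sum_{i=1}^{j}x_i\Bigr)^{-n_j}\prod_{j=1}^{r}d\mm(x_j),
\end{equation*}
and no step here uses $n_j>0$.

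Next I would unfold, for $z$ in the open unit disc $\{|z|_p<1\}$, the sum over roots of unity appearing on the right of the proposition (with $z$ in place of $1$); there all series converge absolutely, so the finite sum over the $(\xi_k)$ may be interchanged with the summation over $(k_1,\dots,k_r)$. Passing to the roots of unity $\eta_i:=\xi_i\xi_{i+1}\cdots\xi_r$ (so that $\eta_i^c=1$, $\xi_i=\eta_i\eta_{i+1}^{-1}$ with $\eta_{r+1}:=1$, and the running conditions $\xi_i\cdots\xi_r\neq1$ become $\eta_i\neq1$), using the telescoping $\prod_{i=1}^{r}\xi_i^{k_i}=\prod_{i=1}^{r}\eta_i^{\,k_i-k_{i-1}}$ (with $k_0:=0$), and invoking the relation $\sum_{\eta^c=1,\ \eta\neq1}\eta^{m}$, which equals $c-1$ if $c\mid m$ and $-1$ otherwise, applied independently in each $\eta_i$, one rewrites that sum as a signed sum over subsets $T\subseteq\{1,\dots,r\}$, with sign $(-1)^{r-|T|}$ and weight $c^{|T|}$, of polylogarithmic series in $z$ carrying the divisibility constraints $c\mid(k_i-k_{i-1})$ for $i\in T$. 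Under the dictionary $\sum_{i=1}^{j}x_i\leftrightarrow k_j$ (hence $x_j\leftrightarrow k_j-k_{j-1}$), this signed subset sum is precisely the expansion of the $r$-fold product of $\mm$ into $2^r$ blocks obtained by writing each copy of the $c$-twisted measure (recalled in \cite[\S 1]{FKMT2}) as a Bernoulli part minus its $c$-scaled part, and identifying these blocks with the corresponding $p$-adic integrals is, up to this reorganisation, the computation already carried out in \cite[\S 3]{FKMT2}. Finally one lets $z\to1$ and appeals to the identity theorem for rigid analytic functions; this is legitimate precisely because the standing hypotheses $\xi_i\cdots\xi_r\neq1$ ($1\leqslant i\leqslant r$) say exactly that $1\notin\,]S[$ in \eqref{S0}, so that each $\ell^{(p),\star}_{n_1,\dots,n_r}(\xi_1,\dots,\xi_r;z)$ is regular at $z=1$. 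Together with the first step this yields the proposition.

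The main obstacle is organisational rather than analytic: one must confirm that in the argument of \cite[\S 3]{FKMT2} the indices enter only through the facts $n_j\in\mathbb{Z}$ and $p\nmid k_i$ --- whence $|k_i|_p=1$, so $|k_i^{-n_i}|_p=1$, every term stays bounded, and the measure-theoretic manipulations and interchanges go through verbatim --- and never through $n_j>0$. Note that the series defining $\ell^{(p),\star}_{n_1,\dots,n_r}$ already fails to converge at $z=1$ for positive integer indices, so working throughout on $\{|z|_p<1\}$ and transporting the identity to $z=1$ by rigid analytic continuation is forced in every case and introduces nothing new; this is the one point demanding genuine care. As a sanity check, the case $r=1$ degenerates, via \eqref{KL-pMLF}, to the classical relation between $c$-twisted partial zeta values at integers and the Kubota--Leopoldt $p$-adic $L$-function.
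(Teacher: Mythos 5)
Your first step is sound: at $(s_j)=(n_j)$ with twists $\omega^{-n_j}$ the integrand of the $p$-adic multiple $L$-function collapses to $\prod_{j}\bigl(\sum_{i\leqslant j}x_i\bigr)^{-n_j}$, which is a perfectly good $\mathbb{C}_p$-valued function on $(\mathbb{Z}_p^r)'$ for arbitrary $n_j\in\mathbb{Z}$, and your root-of-unity bookkeeping (the substitution $\eta_i=\xi_i\cdots\xi_r$, the telescoping of $\prod_i\xi_i^{k_i}$, and the evaluation of $\sum_{\eta^c=1,\,\eta\neq1}\eta^{m}$ as $c-1$ or $-1$ according as $c\mid m$ or not) is correct. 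So is your observation that the hypotheses $\xi_i\cdots\xi_r\neq1$ force $1\notin\,]S[$ in \eqref{S0}, since prime-to-$p$ roots of unity inject under reduction. Note also that the paper itself writes down no proof of Proposition \ref{L-ell prop}: it is asserted alongside the rigid-analyticity of $\ell^{(p),\star}$ as following ``by the completely same way as the arguments in \cite[\S 3]{FKMT2}'', so deferring the measure-theoretic computation to \cite{FKMT2} is, in itself, consistent with what the paper does.

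The genuine gap is in the mechanism you propose for closing the argument. After the reorganisation on $\{\,|z|_p<1\,\}$ you have an identity between two rearrangements of the same family of series in $z$; the integral $\int_{(\mathbb{Z}_p^r)'}\prod_{j}\bigl(\sum_{i\leqslant j}x_i\bigr)^{-n_j}\prod_j d\mm(x_j)$ never appears as a function of $z$ on that disc (there is no meaning to $z^{x}$ for $x\in\mathbb{Z}_p$ when $|z|_p<1$), so there is no identity of rigid functions to which the coincidence principle could be applied, and ``letting $z\to1$'' is not available from inside the unit disc in any case, since every $z$ with $|z|_p<1$ satisfies $|z-1|_p=1$. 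What is actually needed --- and what constitutes the substance of the proposition --- is the interpolation statement tying the integral against $\mm$ to the value at $z=1$ of the overconvergent continuation of the prime-to-$p$ twisted series; that is a statement about the point $z=1$ (a limit of locally constant approximations of the integrand matched against the explicit distribution values of $\mm$), not something obtainable by analytic continuation in $z$ from the disc around $0$, and it is exactly the computation of \cite[\S 3]{FKMT2} (resting on the overconvergence established here in Proposition \ref{rigidness II} and Corollary \ref{rigidness 3}) that your sketch compresses into one clause. A smaller point: the individual ``blocks'' carrying the divisibility constraints $c\mid(k_i-k_{i-1})$ are, by orthogonality, combinations of $\ell^{(p),\star}$'s that include trivial twists, hence are in general singular on the residue disc of $1$; only the full alternating sum over subsets is regular there, so a block-by-block identification at $z=1$ is not legitimate as stated. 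Your sanity checks (independence of the argument from $n_j>0$, and the $r=1$ degeneration via \eqref{KL-pMLF}) are fine.
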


The $p$-adic partial TMSPL can also be defined for indices %that are not necessarily all positive integers 
with arbitrary integers 
in the same way as \cite[Definition 3.4]{FKMT2}:
Let  $n_1,\dots,n_r\in \mathbb{Z}$ and $\xi_1,\dots,\xi_{r}\in\mathbb{C}_p$ with $|\xi_j|_p\leqslant  1$ ($1\leqslant  j\leqslant  r$).
Let  $\alpha_1,\dots,\alpha_r\in \mathbb{N}$ with $0<\alpha_j<p$
 ($1\leqslant  j\leqslant  r$).
The {\bf $p$-adic partial TMSPL} 
$\ell^{\equiv (\alpha_1,\dots,\alpha_r),(p),\star}_{n_1,\dots,n_r}(\xi_1,\dots\xi_{r};z)$ is defined by the following 
$p$-adic power series:
\begin{equation}\label{series expression for partial double}
\ell^{\equiv (\alpha_1,\dots,\alpha_r),(p),\star}_{n_1,\dots,n_r}(\xi_1,\dots,\xi_{r};z):=
\underset{k_1\equiv \alpha_1,\dots ,k_r\equiv\alpha_r \bmod p
% (1\leqslant  j\leqslant  r-1)
}{\underset{0<k_1\leqslant\cdots \leqslant k_r}{\sum}}
\frac{\xi_1^{k_1}\cdots\xi_{r}^{k_{r}}}{k_1^{n_1}\cdots k_r^{n_r}}z^{k_r}
\end{equation}
which converges for $z\in ]\bar{0}[$.

When $|\xi_j|_p= 1$ for all $1\leqslant  j\leqslant  r$,
by the completely same way as the arguments in \cite[\S 3.2]{FKMT2}, 
we can show that it is a rigid analytic function on
${\bf P}^1({\mathbb C}_p) - ]S[$.
Namely, 
\begin{equation}\label{partial-rig}
\ell^{\equiv (\alpha_1,\dots,\alpha_r),(p),\star}_{n_1,\dots,n_r}(\xi_1,\dots,\xi_{r};z)\in 
A^{\mathrm{rig}}( {\bf P}^1\setminus S).
\end{equation}
We have 
\begin{equation}\label{partiall0}
\ell^{\equiv (\alpha_1,\dots,\alpha_r),(p),\star}_{n_1,\dots,n_r}(\xi_1,\dots,\xi_{r};0)
=\ell^{\equiv (\alpha_1,\dots,\alpha_r),(p),\star}_{n_1,\dots,n_r}(\xi_1,\dots,\xi_{r};\infty)=0
\end{equation}
by the equality
\begin{equation*}\label{partial ell-ell}
\ell^{\equiv (\alpha_1,\dots,\alpha_r),(p),\star}_{n_1,\dots,n_r}(\xi_1,\dots,\xi_{r};z)
=\frac{1}{p^r}\sum_{\rho_1^p=\cdots=\rho_r^p=1} \rho_1^{-\alpha_1}\cdots\rho_r^{-\alpha_r}
\ell^{(p),\star}_{n_1,\dots,n_r}(\rho_1\xi_1,\dots,\rho_{r}\xi_{r}; z).
\end{equation*}
We also note
 \begin{equation}\label{ell-partial ell}
\ell^{(p),\star}_{n_1,\dots,n_r}(\xi_1,\dots,\xi_{r};z)=
\sum_{0<\alpha_1,\dots, \alpha_r<p}
\ell^{\equiv (\alpha_1,\dots,\alpha_r),(p),\star}_{n_1,\dots,n_r}(\xi_1,\dots,\xi_{r};z).
\end{equation}

The following formulas are extensions of \cite[Lemma 3.19]{FKMT2}
to the case of indices %that are not necessarily all positive integers.
with arbitrary integers. 

\begin{lmm}\label{differential equations}
Let  $n_1,\dots,n_r\in \mathbb{Z}$, 
$\xi_1,\dots,\xi_{r}\in\mathbb{C}_p$ with $|\xi_j|_p\leqslant  1$ 
$(1\leqslant  j\leqslant  r)$
and $\alpha_1,\dots,\alpha_r\in \mathbb{N}$ with $0<\alpha_j<p$
$(1\leqslant  j\leqslant  r)$.

{\rm (i)}\ For any index $(n_1,\dots,n_r)$,  
$$\frac{d}{dz}\ell^{\equiv (\alpha_1,\dots,\alpha_r),(p),\star}_{n_1,\dots,n_r}(\xi_1,\dots,\xi_{r};z)
=\frac{1}{z}\ell^{\equiv (\alpha_1,\dots,\alpha_r),(p),\star}_{n_1,\dots,n_r-1}(\xi_1,\dots,\xi_{r};z).
$$

{\rm (ii)}\ For $n_r=1$ and $r\neq 1$,
\begin{align*}
\lefteqn{\frac{d}{dz}\ell^{\equiv (\alpha_1,\dots,\alpha_r),(p),\star}_{n_1,\dots,n_r}(\xi_1,\dots,\xi_{r};z)}\\
&=
\begin{cases}
&\frac{\xi_r(\xi_r z)^{\alpha_r-\alpha_{r-1}-1}}{1-(\xi_r z)^p}
\ell^{\equiv (\alpha_1,\dots,\alpha_{r-1}),(p),\star}_{n_1,\dots,n_{r-1}}(\xi_1,\dots,\xi_{r-2},\xi_{r-1};\xi_r z)  \\
&\qquad\qquad\qquad\qquad\qquad\qquad \text{if}\quad \alpha_r\geqslant\alpha_{r-1} , \\
&\frac{\xi_r(\xi_r z)^{\alpha_r-\alpha_{r-1}+p-1}}{1-(\xi_r z)^p}\ell^{\equiv (\alpha_1,\dots,\alpha_{r-1}),(p),\star}_{n_1,\dots,n_{r-1}}(\xi_1,\dots,\xi_{r-2},\xi_{r-1};\xi_r z)  \\
&\qquad\qquad\qquad\qquad\qquad\qquad \text{if}\quad \alpha_r < \alpha_{r-1}. \\
\end{cases}
\end{align*}

{\rm (iii)}\ For $n_r=1$ and $r=1$ with $\xi_1=\xi$ and  $\alpha_1=\alpha$,
 $$\frac{d}{dz}\ell^{\equiv \alpha, (p),\star}_{1}(\xi;z)=
%\begin{cases}
%\frac{1}{z}\ell^{\equiv \alpha}_{a-1}(z) &\text{if}\quad a>1, \\
\frac{\xi (\xi z)^{\alpha-1}}{1-(\xi z)^p}.
% &\text{if}\quad a=1.
%\end{cases}
$$

\end{lmm}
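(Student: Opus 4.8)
The plan is to prove all three parts by working directly with the defining $p$-adic power series \eqref{series expression for partial double}. The key preliminary observation is that on $]\bar{0}[$ this series converges \emph{unconditionally}: the congruence conditions $k_j\equiv\alpha_j\bmod p$ with $0<\alpha_j<p$ force $p\nmid k_j$, so $|k_j|_p=1$ and hence $|k_j^{-n_j}|_p=1$ for every $n_j\in\mathbb{Z}$; combined with $|\xi_j|_p\leqslant 1$ this bounds each coefficient by $1$ in absolute value, while $|z^{k_r}|_p=|z|_p^{k_r}\to 0$ as $k_r\to\infty$ whenever $|z|_p<1$. Since $k_r\geqslant k_{r-1}\geqslant\cdots\geqslant k_1$, the general term tends to $0$ as the multi-index grows, so term-by-term differentiation and arbitrary rearrangements of the summation are legitimate on $]\bar{0}[$. (This is exactly where the partial, rather than full, TMSPL is needed, and it is also why the restriction $n_j\geqslant 1$ in \cite[Lemma 3.19]{FKMT2} can be dropped: the bound never used positivity of $n_j$.)

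For (i) I would differentiate \eqref{series expression for partial double} termwise, using $\frac{d}{dz}z^{k_r}=k_r z^{k_r-1}$; absorbing the factor $k_r$ into the denominator $k_r^{n_r}$ and pulling $z^{-1}$ out front gives precisely $\frac{1}{z}\,\ell^{\equiv(\alpha_1,\dots,\alpha_r),(p),\star}_{n_1,\dots,n_r-1}(\xi_1,\dots,\xi_r;z)$, which is valid for any integer index. Part (iii) is then the case $r=1$, $n_1=1$ of (i) together with the elementary evaluation $\ell^{\equiv\alpha,(p),\star}_{0}(\xi;z)=\sum_{k\equiv\alpha\bmod p,\,k>0}(\xi z)^k=(\xi z)^\alpha/(1-(\xi z)^p)$, a geometric series in $(\xi z)^p$ convergent on $]\bar{0}[$; dividing by $z$ and rewriting $\frac{1}{z}(\xi z)^\alpha=\xi(\xi z)^{\alpha-1}$ yields the stated formula.

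For (ii), I would apply (i) with $n_r=1$ to reduce the left-hand side to $\frac{1}{z}\,\ell^{\equiv(\alpha_1,\dots,\alpha_r),(p),\star}_{n_1,\dots,n_{r-1},0}(\xi_1,\dots,\xi_r;z)$, and then perform the innermost summation over $k_r$. For each fixed $(k_1,\dots,k_{r-1})$ with the required congruences and $0<k_1\leqslant\cdots\leqslant k_{r-1}$, the index $k_r$ ranges over all integers $\geqslant k_{r-1}$ with $k_r\equiv\alpha_r\bmod p$; writing $k_{r-1}=\alpha_{r-1}+pm$ one checks that the smallest admissible $k_r$ equals $k_{r-1}+(\alpha_r-\alpha_{r-1})$ when $\alpha_r\geqslant\alpha_{r-1}$ and $k_{r-1}+(\alpha_r-\alpha_{r-1}+p)$ when $\alpha_r<\alpha_{r-1}$. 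Summing the resulting geometric series in $(\xi_r z)^p$ factors out $(\xi_r z)^{\delta}/(1-(\xi_r z)^p)$ with $\delta=\alpha_r-\alpha_{r-1}$ or $\delta=\alpha_r-\alpha_{r-1}+p$ accordingly, and leaves $\sum\frac{\xi_1^{k_1}\cdots\xi_{r-1}^{k_{r-1}}}{k_1^{n_1}\cdots k_{r-1}^{n_{r-1}}}(\xi_r z)^{k_{r-1}}=\ell^{\equiv(\alpha_1,\dots,\alpha_{r-1}),(p),\star}_{n_1,\dots,n_{r-1}}(\xi_1,\dots,\xi_{r-1};\xi_r z)$. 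Finally $\frac{1}{z}(\xi_r z)^{\delta}=\xi_r(\xi_r z)^{\delta-1}$ produces the exponents $\alpha_r-\alpha_{r-1}-1$ and $\alpha_r-\alpha_{r-1}+p-1$ of the statement.

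The only genuinely delicate point will be this last case distinction, namely determining where the arithmetic progression $k_r\equiv\alpha_r\bmod p$ first meets the constraint $k_r\geqslant k_{r-1}$; everything else is routine bookkeeping, carried out exactly as in \cite[Lemma 3.19]{FKMT2}. If one additionally wants these identities on the full rigid-analytic locus ${\bf P}^1\setminus\,]S[$ in the case $|\xi_j|_p=1$ for all $j$, it then follows from the identity principle for rigid analytic functions, since by \eqref{partial-rig} both sides lie in $A^{\mathrm{rig}}({\bf P}^1\setminus S)$ and agree on $]\bar{0}[$.
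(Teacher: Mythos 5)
Your proposal is correct and is exactly the ``direct computation'' that the paper invokes (its proof of this lemma is a single line): termwise differentiation of the defining series, the geometric-series evaluation of $\ell^{\equiv\alpha,(p),\star}_{0}$, and the case analysis on the first admissible $k_r\equiv\alpha_r\bmod p$ with $k_r\geqslant k_{r-1}$ all match what the authors intend, and your convergence remark (that $p\nmid k_j$ makes $|k_j^{-n_j}|_p=1$ for every integer $n_j$) correctly explains why the restriction to positive indices can be dropped. No gaps.
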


\begin{proof}
They can be proved by direct computations.
\end{proof}

The following result is an extension of \cite[Theorem 3.21]{FKMT2} to the case of indices %that are not necessarily all positive integers.
with arbitrary integers.

\begin{prp}\label{rigidness II}
Let  $n_1,\dots,n_r\in \mathbb{Z}$, 
$\xi_1,\dots,\xi_{r}\in\mathbb{C}_p$ with $|\xi_j|_p= 1$ $(1\leqslant  j\leqslant  r)$
and $\alpha_1,\dots,\alpha_r\in \mathbb{N}$ with $0<\alpha_j<p$
$(1\leqslant  j\leqslant  r)$.
Set $S$ as in \eqref{S0}.
The function 
$\ell^{\equiv (\alpha_1,\dots,\alpha_r),(p),\star}_{n_1,\dots,n_r}(\xi_1,\dots,\xi_{r}; z)$
is an overconvergent function on 
${\bf P}^1\setminus S$.
%_{\overline{\mathbb{F}}_p} \setminus\{\overline{1},\overline{\xi_{r-1}^{-1}},\dots,\overline{(\xi_1\cdots\xi_{r-1})^{-1}}\}$.
Namely, 
$$\ell^{\equiv (\alpha_1,\dots,\alpha_r),(p),\star}_{n_1,\dots,n_r}(\xi_1,\dots,\xi_{r};z)\in
 A^\dag( {\bf P}^1\setminus S
%_{\overline{\mathbb{F}}_p} \setminus\{\overline{1},\overline{\xi_{r-1}^{-1}},\dots,\overline{(\xi_1\cdots\xi_{r-1})^{-1}}\} 
).$$
\end{prp}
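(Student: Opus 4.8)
The plan is to follow the proof of \cite[Theorem 3.21]{FKMT2} essentially verbatim; the only new point is that the indices may now be non-positive, and this is absorbed by \emph{differentiating} (and not only integrating) the defining series, using Lemma \ref{differential equations}. Three inputs are used throughout. First, by \eqref{partial-rig} the function already lies in $A^{\rm rig}({\bf P}^1\setminus S)$, and by \eqref{partiall0} it vanishes at $z=0$ and at $z=\infty$. Second, since $|\xi_j|_p=1$ for all $j$, every point of $S$ (see \eqref{S0}) reduces into $\overline{\mathbb{F}}_p^\times$, so the residue discs $]\bar{0}[$ and $]\overline{\infty}[$ are disjoint from $]S[$; in particular a rational function of $z$ all of whose poles besides possible poles at $0$ or $\infty$ reduce into $S$ is regular on a strict neighbourhood of ${\bf P}^1\setminus S$ away from $0,\infty$. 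Third — the standard rigid-analytic mechanism already in place in \cite[\S 3]{FKMT2} — the ring $A^\dag({\bf P}^1\setminus S)$ is stable under $d/dz$ and under multiplication by rational functions regular on ${\bf P}^1\setminus S$, and a rigid analytic function on ${\bf P}^1\setminus S$ whose $z$-derivative is overconvergent is itself overconvergent.

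The argument is a double induction: an outer induction on $r$, and, for each $r$, an inner induction on $n_r\in\mathbb{Z}$ pivoting at $n_r=1$. For the pivot with $r=1$, Lemma \ref{differential equations}(iii) gives $\frac{d}{dz}\ell^{\equiv \alpha,(p),\star}_{1}(\xi;z)=\xi(\xi z)^{\alpha-1}/(1-(\xi z)^p)$, a rational function whose poles all reduce to $\overline{\xi^{-1}}\in S$; it is therefore overconvergent, and so is its rigid analytic primitive $\ell^{\equiv \alpha,(p),\star}_{1}(\xi;z)$. For the pivot with $r\geqslant 2$, Lemma \ref{differential equations}(ii) expresses $\frac{d}{dz}\ell^{\equiv (\alpha_1,\dots,\alpha_r),(p),\star}_{n_1,\dots,n_r}$ as a rational function of $z$ — whose poles, apart from a possible simple pole at $z=0$ in the case $\alpha_r=\alpha_{r-1}$, all reduce to $\overline{\xi_r^{-1}}\in S$ — times $\ell^{\equiv (\alpha_1,\dots,\alpha_{r-1}),(p),\star}_{n_1,\dots,n_{r-1}}(\xi_1,\dots,\xi_{r-1};\xi_r z)$. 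By the outer induction hypothesis applied to the $(r-1)$-tuple, together with the fact that $|\xi_r|_p=1$ so that $z\mapsto\xi_r z$ carries $\{\overline{\xi_{r-1}^{-1}},\dots,\overline{(\xi_1\cdots\xi_{r-1})^{-1}}\}$ into $S$, the second factor is overconvergent on ${\bf P}^1\setminus S$; moreover it vanishes at $z=0$ by \eqref{partiall0}, which cancels the possible pole of the first factor there, and the product is likewise regular at $\infty$. Hence $\frac{d}{dz}\ell^{\equiv (\alpha_1,\dots,\alpha_r),(p),\star}_{n_1,\dots,n_r}$ is overconvergent, and so is $\ell^{\equiv (\alpha_1,\dots,\alpha_r),(p),\star}_{n_1,\dots,n_r}$.

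For the inner induction I would propagate overconvergence away from the pivot via Lemma \ref{differential equations}(i), i.e. $z\,\frac{d}{dz}\ell^{\equiv}_{n_1,\dots,n_r}=\ell^{\equiv}_{n_1,\dots,n_r-1}$. Downward: if $\ell^{\equiv}_{n_1,\dots,n_r}$ is overconvergent then so is $\ell^{\equiv}_{n_1,\dots,n_r-1}=z\,\frac{d}{dz}\ell^{\equiv}_{n_1,\dots,n_r}$, since $d/dz$ preserves overconvergence and $z$ is regular on ${\bf P}^1\setminus S$; this reaches all $n_r\leqslant 0$. Upward (from $n_r\geqslant 1$): $\frac{d}{dz}\ell^{\equiv}_{n_1,\dots,n_r+1}=\frac1z\ell^{\equiv}_{n_1,\dots,n_r}$, and $\frac1z\ell^{\equiv}_{n_1,\dots,n_r}$ is overconvergent because $\ell^{\equiv}_{n_1,\dots,n_r}$ vanishes at $0$ and $\infty$ (so division by $z$ introduces no pole) and is overconvergent elsewhere; hence its rigid analytic primitive $\ell^{\equiv}_{n_1,\dots,n_r+1}$ is overconvergent. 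This closes the double induction.

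I expect no serious obstacle: each step is either a direct application of Lemma \ref{differential equations} or a routine use of the rigid-analytic facts recalled above. The only genuinely non-formal ingredient is the last of these facts — that a rigid analytic function on ${\bf P}^1\setminus S$ with overconvergent derivative is overconvergent, equivalently that exact overconvergent $1$-forms admit overconvergent primitives — but this is precisely the tool developed in \cite[\S 3]{FKMT2}, and it is insensitive to the sign of the indices; so the remaining work is bookkeeping, matching conventions with \emph{loc. cit.}
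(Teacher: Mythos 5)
Your proposal is correct and takes essentially the same route as the paper: a double induction on the depth $r$ and on $n_r$ pivoting at $n_r=1$, driven by the differential equations of Lemma \ref{differential equations}, with overconvergence of the function deduced from overconvergence of its derivative via the machinery of \cite[\S 3]{FKMT2} (the residue checks, the overconvergent primitive of \cite[Lemma 3.15]{FKMT2}, and the coincidence principle, which the paper spells out and you package as your third ``rigid-analytic fact''). One small caution: in your downward step $z$ is not regular at $\infty$, so the overconvergence of $z\,\frac{d}{dz}\ell^{\equiv(\alpha_1,\dots,\alpha_r),(p),\star}_{n_1,\dots,n_r}$ near $\infty$ should be justified by the vanishing \eqref{partiall0} there, exactly as the paper does in its case (iii).
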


Here $ A^\dag( {\bf P}^1\setminus S)$ means the space of overconvergent functions on ${\bf P}^1\setminus S$ (consult \cite[Notation 3.13]{FKMT2}).

\begin{proof}
The proof of \cite[Theorem 3.21]{FKMT2}  was done by the induction on the weight
but here it is achieved by the induction on the depth $r$.

(i) Assume that $r=1$.
By \cite[Theorem 3.21]{FKMT2}, we  know 
$\ell^{\equiv \alpha_1,(p),\star}_{n_1}(\xi_1;z)\in A^\dag( {\bf P}^1\setminus S)$
when $n_1>0$.
When $n_1\leqslant  0$, 
%it is immediate to see by 
%the differential equation in Lemma \ref{differential equations} (iii)
%because the differential of an overconvergent function is again 
%an overconvergent function.
%it is immediate because 
by Lemma \ref{differential equations} (i) and (iii) we know that 
the function is a rational function and
the degree of whose numerator 
is less than that of whose denominator
which is a power of $1-(\xi_1 z)^p$,
which implies that the poles of the function
are of the form $\zeta_p/\xi_1$ ($\zeta_p\in\mu_p$).

(ii) Assume that $r>1$ and $n_r=1$.
We put
\begin{equation*}
%S_0'=\{\bar\infty\}\cup S_0 
S_\infty=S\cup \{\overline{ \infty}\}
\qquad \text{ and } \qquad
S_{\infty,0}=S\cup \{\overline{ \infty}\}\cup\{\bar 0\}
\end{equation*}
and take a lift $\{\widehat{s_0}, \widehat{s_1}, \dots, \widehat{s_d}\}$  
of $S_{\infty,0}$  with
%\begin{equation*}
$\widehat{s_0}=\infty$  and % \qquad \text{ and } \qquad
$\widehat{s_1}=0$.
%\end{equation*}
Put
$$
\beta(z):=
\begin{cases}
\frac{\xi_r (\xi_r z)^{\alpha_r-\alpha_{r-1}-1}}{1-(\xi_r z)^p} &\text{if}\quad \alpha_r\geqslant\alpha_{r-1} ,\\
\frac{\xi_r (\xi_r z)^{\alpha_r-\alpha_{r-1}+p-1}}{1-(\xi_r z)^p}
&\text{if}\quad \alpha_r< \alpha_{r-1} .\\
\end{cases}
$$
By our assumption
$$
\ell^{\equiv (\alpha_1,\dots,\alpha_{r-1}),(p),\star}_{n_1,\dots,n_{r-1}}(\xi_1,\dots,\xi_{r-2},\xi_{r-1};\xi_r z) 
\in A^\dag( {\bf P}^1 %_{\overline{\mathbb{F}}_p} 
\setminus\{
%\overline{1},
\overline{\xi_{r}^{-1}},\dots,\overline{(\xi_1\cdots\xi_{r})^{-1}}\} )
$$
and by the fact
$\beta(z)dz\in
\Omega^{\dag,1}({\bf P}^1%_{\overline{\mathbb{F}}_p}
\setminus\{\overline{0},\overline{\infty},\overline{\xi_r^{-1}}\})$,
we have
\begin{equation*}
\ell^{\equiv (\alpha_1,\dots,\alpha_{r-1}),(p),\star}_{n_1,\dots,n_{r-1}}(\xi_1,\dots,\xi_{r-2},\xi_{r-1};\xi_r z) \cdot \beta(z)dz
\in\Omega^{\dag,1}( {\bf P}^1 \setminus S_{\infty,0}).
\end{equation*}
Put
\begin{equation}\label{put}
f(z):=\ell^{\equiv (\alpha_1,\dots,\alpha_{r-1}),(p),\star}_{n_1,\dots,n_{r-1}}(\xi_1,\dots,\xi_{r-2},\xi_{r-1};\xi_r z) \cdot \beta(z)
\in A^{\dag}( {\bf P}^1 \setminus S_{\infty,0}).
\end{equation}
For the symbols $A^\dag$ and $\Omega^{\dag,1}$, consult 
\cite[\S 3.2]{FKMT2}.

Since $\ell^{\equiv (\alpha_1,\dots,\alpha_r),(p),\star}_{n_1,\dots,n_r}(\xi_1,\dots,\xi_{r};z)$
belongs to
$
A^{\text{rig}}( {\bf P}^1\setminus S
%({\mathbb C}_p) -]\overline{1},\overline{\xi_{r-1}^{-1}},\dots,\overline{(\xi_1\cdots\xi_{r-1})^{-1}}[  
) 
\Bigl(\subset
A^{\text{rig}}( {\bf P}^1\setminus S_{\infty,0}
%({\mathbb C}_p) -]\overline{\infty},\overline{0},\overline{1},\overline{\xi_{r-1}^{-1}},\dots,\overline{(\xi_1\cdots\xi_{r-1})^{-1}}[ 
 ) 
\Bigr)
$
by \eqref{partial-rig}
and it satisfies the differential equation in Lemma \ref{differential equations} (ii),
i.e. its differential  is equal to $f(z)$,
we have particularly,
in the expression of \cite[Lemma 3.14]{FKMT2},
\begin{equation}\label{expansion at 0}
a_m(\widehat{ s_1};f)=0 \qquad (m>0)
\end{equation}
(recall $\widehat{s_1}=0$) and
\begin{equation}\label{other residues}
a_1(\widehat{ s_l};f)=0 \qquad (2\leqslant  l\leqslant  d).
\end{equation}
%by \eqref{description of A1} and \eqref{description of A2}.

By \eqref{put} and \eqref{expansion at 0},
$$
f(z)\in A^{\dag}( {\bf P}^1\setminus S_\infty
%_{\overline{\mathbb{F}}_p} \setminus\{\overline{\infty},\overline{1},\overline{\xi_{r-1}^{-1}},\dots,\overline{(\xi_1\cdots\xi_{r-1})^{-1}}\} 
).
$$

By \eqref{other residues} and  \cite[Lemma 3.15]{FKMT2},
there exists a unique function
$F(z)$ in 
$
A^{\dag}( {\bf P}^1\setminus S_\infty
%_{\overline{\mathbb{F}}_p} \setminus\{\overline{\infty},\overline{1},\overline{\xi_{r-1}^{-1}},\dots,\overline{(\xi_1\cdots\xi_{r-1})^{-1}}\} 
)
$,
i.e. a function $F(z)$ which is rigid analytic on an affinoid  $V$ containing
$$
{\bf P}^1({\mathbb C}_p) - \,]S_\infty[ \  = 
{\bf P}^1({\mathbb C}_p) - \,]\overline{ \infty},S[
%]\overline{\infty},\overline{1},\overline{\xi_{r-1}^{-1}},\dots,\overline{(\xi_1\cdots\xi_{r-1})^{-1}}[,
$$
such that 
\begin{equation}\label{differential property}
F(0)=0 \quad \text{and} \quad
dF(z)=f(z)dz.
\end{equation}

Since $\ell^{\equiv (\alpha_1,\dots,\alpha_r),(p),\star}_{n_1,\dots,n_r}(\xi_1,\dots,\xi_{r}; z)$
is also a unique function in
$
A^{\text{rig}}( {\bf P}^1\setminus S
%({\mathbb C}_p) -]\overline{1},\overline{\xi_{r-1}^{-1}},\dots,\overline{(\xi_1\cdots\xi_{r-1})^{-1}}[  
) 
$
satisfying \eqref{differential property},
the restrictions of both $F(z)$ and
$\ell^{\equiv (\alpha_1,\dots,\alpha_r),(p),\star}_{n_1,\dots,n_r}(\xi_1,\dots,\xi_{r}; z)$
to the subspace
${\bf P}^1({\mathbb C}_p) -]S_\infty[
%]\overline{\infty},\overline{1},\overline{\xi_{r-1}^{-1}},\dots,\overline{(\xi_1\cdots\xi_{r-1})^{-1}}[}
$ must coincide, i.e.
$$
F(z)\Bigm|_{{\bf P}^1({\mathbb C}_p) - ]S_\infty[
%-]\overline{\infty},\overline{1},\overline{\xi_{r-1}^{-1}},\dots,\overline{(\xi_1\cdots\xi_{r-1})^{-1}}[
}
\equiv
\ell^{\equiv (\alpha_1,\dots,\alpha_r),(p),\star}_{n_1,\dots,n_r}(\xi_1,\dots,\xi_{r};z)\Bigm|_{{\bf P}^1({\mathbb C}_p) - ]S_\infty[
%]\overline{\infty},\overline{1},\overline{\xi_{r-1}^{-1}},\dots,\overline{(\xi_1\cdots\xi_{r-1})^{-1}}[
}.
$$

Hence by the coincidence principle of rigid analytic functions (\cite[Proposition 3.3]{FKMT2}),
there is a rigid analytic function $G(z)$ on the union of $V$ and
${\bf P}^1({\mathbb C}_p) - ]S[$
%]\overline{1},\overline{\xi_{r-1}^{-1}},\dots,\overline{(\xi_1\cdots\xi_{r-1})^{-1}}[  $
%tubular neighborhood $U$ of
%${\bf P}^1({\mathbb C}_p) -]\overline{\infty},\overline{1},\overline{\xi_{r-1}^{-1}},\dots,\overline{(\xi_1\cdots\xi_{r-1})^{-1}}[$
whose restriction to $V$ is equal to $F(z)$
and whose restriction to 
${\bf P}^1({\mathbb C}_p) -]S[
%]\overline{1},\overline{\xi_{r-1}^{-1}},\dots,\overline{(\xi_1\cdots\xi_{r-1})^{-1}}[  
$
is equal to
$\ell^{\equiv (\alpha_1,\dots,\alpha_r),(p),\star}_{n_1,\dots,n_r}(\xi_1,\dots,\xi_{r}; z)$.
So we can say that  
$$\ell^{\equiv (\alpha_1,\dots,\alpha_r),(p),\star}_{n_1,\dots,n_r}(\xi_1,\dots,\xi_{r};z)\in
A^{\text{rig}}( {\bf P}^1\setminus S
%({\mathbb C}_p) -]\overline{1},\overline{\xi_{r-1}^{-1}},\dots,\overline{(\xi_1\cdots\xi_{r-1})^{-1}}[  
) 
$$
can be rigid analytically
extended to a bigger rigid analytic space
%than
%${\bf P}^1({\mathbb C}_p) -]\overline{1},\overline{\xi_{r-1}^{-1}},\dots,%\overline{(\xi_1\cdots\xi_{r-1})^{-1}}[  $
by $G(z)$.
Namely, 
$$\ell^{\equiv (\alpha_1,\dots,\alpha_r),(p),\star}_{n_1,\dots,n_r}(\xi_1,\dots,\xi_{r};z)\in
 A^\dag( {\bf P}^1\setminus S
%_{\overline{\mathbb{F}}_p} \setminus\{\overline{1},\overline{\xi_{r-1}^{-1}},\dots,\overline{(\xi_1\cdots\xi_{r-1})^{-1}}\} 
).$$

(iii)  Assume that $r>1$ and $n_r<1$.
In our (ii) above,  we showed that 
\begin{equation}\label{nr1}
\ell^{\equiv (\alpha_1,\dots,\alpha_r),(p),\star}_{n_1,\dots,n_{r-1},1}(\xi_1,\dots,\xi_{r};z)\in
 A^\dag( {\bf P}^1\setminus S).
\end{equation}
Now showing that $\ell^{\equiv (\alpha_1,\dots,\alpha_r),(p),\star}_{n_1,\dots,n_r}(\xi_1,\dots,\xi_{r};z)\in
 A^\dag( {\bf P}^1\setminus S)$
is immediate,
which follows from
the differential equation in Lemma \ref{differential equations} (i)
and \eqref{partiall0}.

(iv)  Assume that $r>1$ and $n_r>1$.
The proof in this case can be achieved by the induction on $n_r$.
Recall that  we have \eqref{nr1} by our (ii) above.
By our assumption
$$
\ell^{\equiv (\alpha_1,\dots,\alpha_r),(p),\star}_{n_1,\dots,n_{r-1},n_r-1}(\xi_1,\dots,\xi_{r-1},\xi_r;z)\in A^\dag( {\bf P}^1\setminus S
%_{\overline{\mathbb{F}}_p} \setminus\{\overline{1},\overline{\xi_{r-1}^{-1}},\dots,\overline{(\xi_1\cdots\xi_{r-1})^{-1}}\} 
)
$$
and by the fact $\frac{dz}{z}\in 
\Omega^{\dag,1}({\bf P}^1%_{\overline{\mathbb{F}}_p}
\setminus\{\overline{\infty},\overline{0}\})$,
we have 
\begin{equation*}
\ell^{\equiv (\alpha_1,\dots,\alpha_r),(p),\star}_{n_1,\dots,n_{r-1},n_r-1}(\xi_1,\dots,\xi_{r-1},\xi_r;z)\frac{dz}{z}\in
%A^\dag({\bf P}^1_{\overline{\mathbb{F}}_p}\setminus\{{\bar 0},\overline{ \infty}\})$,
\Omega^{\dag,1}( {\bf P}^1\setminus S_{\infty,0}
%_{\overline{\mathbb{F}}_p} \setminus\{\overline{\infty},\overline{0},\overline{1},\overline{\xi_{r-1}^{-1}},\dots,\overline{(\xi_1\cdots\xi_{r-1})^{-1}}\}
 ).
\end{equation*}
Put
\begin{equation*}
f(z):=\frac{1}{z}\ell^{\equiv (\alpha_1,\dots,\alpha_r),(p),\star}_{n_1,\dots,n_{r-1},n_r-1}(\xi_1,\dots,\xi_{r-1},\xi_r; z)\in
A^{\dag}( {\bf P}^1\setminus S_{\infty,0}
%_{\overline{\mathbb{F}}_p} \setminus\{\overline{\infty},\overline{0},\overline{1},\overline{\xi_{r-1}^{-1}},\dots,\overline{(\xi_1\cdots\xi_{r-1})^{-1}}\} 
).
\end{equation*}
Then  it follows that
%the same arguments as the above that
$$\ell^{\equiv (\alpha_1,\dots,\alpha_r),(p),\star}_{n_1,\dots,n_r}(\xi_1,\dots,\xi_{r}; z)\in
 A^\dag( {\bf P}^1\setminus S)$$
by the same arguments as those given in (ii) above.
\end{proof}

By \eqref{ell-partial ell} and Proposition \ref{rigidness II}, we have

\begin{crl}\label{rigidness 3}
Let  $n_1,\dots,n_r\in \mathbb{Z}$, 
$\xi_1,\dots,\xi_{r}\in\mathbb{C}_p$ with $|\xi_j|_p= 1$ $(1\leqslant  j\leqslant  r)$.
%and $\alpha_1,\dots,\alpha_r\in \mathbb{N}$ with $0<\alpha_j<p$ ($1\leqslant  j\leqslant  r$).
Set $S$ as in \eqref{S0}.
The function 
$\ell^{(p),\star}_{n_1,\dots,n_r}(\xi_1,\dots,\xi_{r}; z)$
is an overconvergent function on 
${\bf P}^1\setminus S$.
Namely, 
$\ell^{(p),\star}_{n_1,\dots,n_r}(\xi_1,\dots,\xi_{r};z)\in
A^\dag( {\bf P}^1\setminus S).$
\end{crl}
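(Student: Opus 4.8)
The plan is to obtain the corollary as an immediate formal consequence of Proposition~\ref{rigidness II} together with the finite decomposition \eqref{ell-partial ell}. First I would recall that $A^\dag({\bf P}^1\setminus S)$, the space of overconvergent functions on ${\bf P}^1\setminus S$, is a $\mathbb{C}_p$-vector space (in fact a ring), hence in particular closed under finite $\mathbb{C}_p$-linear combinations. This is the only structural property of overconvergent functions that the argument needs.

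Next, by \eqref{ell-partial ell} we have the finite expansion
$$\ell^{(p),\star}_{n_1,\dots,n_r}(\xi_1,\dots,\xi_{r};z)=\sum_{0<\alpha_1,\dots,\alpha_r<p}\ell^{\equiv (\alpha_1,\dots,\alpha_r),(p),\star}_{n_1,\dots,n_r}(\xi_1,\dots,\xi_{r};z),$$
the sum ranging over the $(p-1)^r$ tuples $(\alpha_1,\dots,\alpha_r)$ with $0<\alpha_j<p$. For each such tuple the hypotheses of Proposition~\ref{rigidness II} are satisfied: the condition $0<\alpha_j<p$ is built into the range of summation, and the condition $|\xi_j|_p=1$ for all $j$ is exactly what is assumed in the statement of the corollary. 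Hence each summand lies in $A^\dag({\bf P}^1\setminus S)$, and since the sum is finite, so does the left-hand side. This is the assertion.

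There is essentially no obstacle here: all of the analytic work — the rigid-analytic continuation to ${\bf P}^1\setminus S$, the overconvergence, and the induction on the depth $r$ — has already been carried out in the proof of Proposition~\ref{rigidness II}, and the present statement is pure bookkeeping built on the identity \eqref{ell-partial ell}. The one point I would make explicit is that the exceptional set $S$ of \eqref{S0} depends only on $\xi_1,\dots,\xi_r$ and not on the residue classes $\alpha_1,\dots,\alpha_r$, so every term of the sum is overconvergent on the \emph{same} space ${\bf P}^1\setminus S$; this is precisely what legitimizes passing the conclusion through the finite sum.
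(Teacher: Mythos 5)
Your proposal is correct and follows exactly the paper's argument: the paper deduces the corollary in one line from the decomposition \eqref{ell-partial ell} into finitely many partial TMSPLs together with Proposition \ref{rigidness II}. Your additional remarks (linearity of $A^\dag({\bf P}^1\setminus S)$ and the independence of $S$ from the residue classes $\alpha_j$) simply make explicit what the paper leaves implicit.
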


The $p$-adic TMSPL can also be defined for  indices %that are not necessarily all positive integers 
with arbitrary integers 
in the same way as \cite[Definition 3.29]{FKMT2}:
Let  $n_1,\dots,n_r\in \mathbb{Z}$ and $\xi_1,\dots,\xi_{r}\in\mathbb{C}_p$ with $|\xi_j|_p\leqslant  1$ ($1\leqslant  j\leqslant  r$).
The {\bf $p$-adic TMSPL} 
$Li^{(p),\star}_{n_1,\dots,n_r}(\xi_1,\dots,\xi_r; z)$ is defined by the following 
$p$-adic power series:
\begin{equation}\label{series expression}
Li^{(p),\star}_{n_1,\dots,n_r}(\xi_1,\dots,\xi_r ;z):=
%\underset{(k_1,p)=\cdots=(k_r,p)=1}
{\underset{0<k_1\leqslant\cdots\leqslant k_{r}}{\sum}}
\frac{\xi_1^{k_1}\cdots\xi_{r}^{k_{r}} z^{k_r}}{k_1^{n_1}\cdots k_r^{n_r}}
\end{equation}
which converges for $z\in ]\bar{0}[$ %:=\{x\in\mathbb{C}_p\bigm| \ |x|_p<1\}$ 
by $|\xi_j|_p\leqslant  1$ for $1\leqslant  j\leqslant  r$.
%Note that the special value at $z = 1$ of \eqref{series expression} is the $p$-adic multiple $L$-value $Li^{(p)}_{n_1,\dots,n_r}(\xi_1,\dots,\xi_r)$ introduced by Yamashita \cite{Ya}.
By direct computations one obtains the following differential equations which are
extensions of  \cite[Lemma 3.31]{FKMT2} to the case of indices %that are not necessarily all positive integers.
with arbitrary integers.
%for $n_1,\dots,n_r\in \mathbb Z$.

\begin{lmm}\label{differential equations for Li}
Let $n_1,\dots,n_r\in \mathbb{Z}$,
$\xi_1,\dots,\xi_{r}\in\mathbb{C}_p$
with $|\xi_j|_p\leqslant  1$ $(1\leqslant  j\leqslant  r)$.

{\rm (i)}\ For any index $(n_1,\dots,n_r)$,  
$$\frac{d}{dz}Li^{(p),\star}_{n_1,\dots,n_r}(\xi_1,\dots,\xi_{r};z)
=\frac{1}{z}Li^{(p),\star}_{n_1,\dots,n_{r-1},n_r-1}(\xi_1,\dots,\xi_{r};z).$$

{\rm (ii)}\ For $n_r=1$ and $r\neq 1$,
$$\frac{d}{dz}Li^{(p),\star}_{n_1,\dots,n_r}(\xi_1,\dots,\xi_{r};z)=
\left\{\frac{\xi_r}{1-\xi_r z}+\frac{1}{z}\right\}Li^{(p),\star}_{n_1,\dots,n_{r-1}}(\xi_1,\dots,\xi_{r-2},\xi_{r-1};\xi_r z).$$

{\rm (iii)}\ For $n_r=1$ and $r=1$ with $\xi_1=\xi$,
$$\frac{d}{dz}Li^{(p),\star}_{1}(\xi;z)=
\frac{\xi}{1-\xi z}.$$
\end{lmm}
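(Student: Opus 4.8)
The plan is to establish all three formulas by termwise differentiation of the defining $p$-adic power series \eqref{series expression}, which is legitimate on the disk $]\bar 0[$ because a power series over $\mathbb{C}_p$ and its formal derivative share the same radius of convergence. Note that the signs of the exponents $n_j$ play no role whatsoever: the power of $z$ attached to a monomial is the running index $k_r$, and only that index is touched by differentiation, so every argument below is insensitive to whether the $n_j$ are positive, zero, or negative. This is why the formulas, originally proved in \cite[Lemma 3.31]{FKMT2} for positive indices, carry over verbatim.

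For part (i), differentiating \eqref{series expression} term by term gives
\[
\frac{d}{dz}Li^{(p),\star}_{n_1,\dots,n_r}(\xi_1,\dots,\xi_{r};z)
=\sum_{0<k_1\leqslant\cdots\leqslant k_r}\frac{\xi_1^{k_1}\cdots\xi_r^{k_r}}{k_1^{n_1}\cdots k_r^{n_r}}k_r z^{k_r-1},
\]
and rewriting the typical term as $\frac1z\cdot\frac{\xi_1^{k_1}\cdots\xi_r^{k_r}}{k_1^{n_1}\cdots k_r^{n_r-1}}z^{k_r}$ identifies the right-hand side with $\frac1z Li^{(p),\star}_{n_1,\dots,n_{r-1},n_r-1}(\xi_1,\dots,\xi_{r};z)$. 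For part (iii), with $r=1$ and $n_1=1$ the series is $\sum_{k\geqslant 1}\frac{(\xi z)^k}{k}$, whose termwise derivative is the geometric series $\sum_{k\geqslant 1}\xi(\xi z)^{k-1}=\frac{\xi}{1-\xi z}$, convergent on $]\bar 0[$.

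The substantive case is part (ii). Since $n_r=1$, differentiating the left-hand side termwise yields
\[
\frac{d}{dz}Li^{(p),\star}_{n_1,\dots,n_{r-1},1}(\xi_1,\dots,\xi_{r};z)
=\sum_{0<k_1\leqslant\cdots\leqslant k_r}\frac{\xi_1^{k_1}\cdots\xi_r^{k_r}}{k_1^{n_1}\cdots k_{r-1}^{n_{r-1}}}z^{k_r-1}.
\]
On the right-hand side I would expand $\frac{\xi_r}{1-\xi_r z}=\sum_{m\geqslant 0}\xi_r^{m+1}z^m$ and write $Li^{(p),\star}_{n_1,\dots,n_{r-1}}(\xi_1,\dots,\xi_{r-1};\xi_r z)=\sum_{0<k_1\leqslant\cdots\leqslant k_{r-1}}\frac{\xi_1^{k_1}\cdots\xi_{r-1}^{k_{r-1}}}{k_1^{n_1}\cdots k_{r-1}^{n_{r-1}}}\xi_r^{k_{r-1}}z^{k_{r-1}}$. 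Multiplying these two series and setting $k_r:=k_{r-1}+m+1$ produces exactly the portion of the displayed sum with $k_{r-1}<k_r$, while multiplying $Li^{(p),\star}_{n_1,\dots,n_{r-1}}(\xi_1,\dots,\xi_{r-1};\xi_r z)$ by $\frac1z$ and setting $k_r:=k_{r-1}$ produces the portion with $k_{r-1}=k_r$. Adding the two pieces restores the full range $k_{r-1}\leqslant k_r$, which gives the claimed identity.

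No genuine obstacle is expected. The only points deserving a sentence of care are the validity of termwise differentiation of $p$-adic power series on $]\bar 0[$, and, in (ii), the bookkeeping that splits the range of $k_r$ into the strict inequality $k_{r-1}<k_r$ (coming from the $\frac{\xi_r}{1-\xi_r z}$ factor, whose expansion begins at $m=0$, i.e. $k_r=k_{r-1}+1$) and the equality $k_{r-1}=k_r$ (coming from the $\frac1z$ term).
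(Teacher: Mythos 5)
Your proposal is correct, and it is exactly the argument the paper intends: the paper offers no written proof beyond the remark that the identities follow ``by direct computations'' on the defining series \eqref{series expression}, which is precisely your termwise differentiation together with the split of the range of $k_r$ into $k_r=k_{r-1}$ (giving the $\tfrac1z$ term) and $k_r>k_{r-1}$ (giving the geometric factor $\tfrac{\xi_r}{1-\xi_r z}$). Your observation that the signs of the $n_j$ are irrelevant because differentiation only touches the exponent $k_r$ is also the point that makes the extension from \cite[Lemma 3.31]{FKMT2} to arbitrary integer indices immediate.
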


%The  definition below is suggested by the differential equations above.
The following result is an extension of \cite[Theorem-Definition 3.32]{FKMT2} to the case of 
indices %that are not necessarily all positive integers.
with arbitrary integers.

\begin{prp}\label{Coleman function theorem}
Fix a branch of the $p$-adic logarithm by $\varpi\in\mathbb C_p$.
Let $n_1,\dots,n_r\in \mathbb{Z}$,
$\xi_1,\dots,\xi_{r}\in\mathbb{C}_p$
with $|\xi_j|_p\leqslant  1$ $(1\leqslant  j\leqslant  r)$.
Put 
$$
S_r:=\{\bar 0,\overline{ \infty}, \overline{(\xi_r)^{-1}},\overline{(\xi_{r-1}\xi_r)^{-1}},\dots,\overline{(\xi_1\cdots\xi_r)^{-1}}\}
\subset  {\bf{P}}^{1}(\overline{\mathbb{F}}_p).
$$
%and  ${\bf k}={\bf F}_p(\overline{S})$: the field generated by $\overline{S}={\rm red}(S)$.
%The (one-variable)  $p$-adic TMSPL
Then the function
$Li^{(p),\star}_{n_1,\dots,n_r}(\xi_1,\dots,\xi_r; z)$
can be analytically continued as a 
Coleman function attached to  $\varpi\in\mathbb C_p$,
that is, 
$$
Li^{(p),\star,\varpi}_{n_1,\dots,n_r}(\xi_1,\dots,\xi_r; z)\in
A^\varpi_{\text{\rm Col}}
({\bf P}^1\setminus S_r
%_{\bf k}  \setminus\overline{S}
)
$$
whose restriction to $]\bar 0[$ is given by 
$Li^{(p),\star}_{n_1,\dots,n_r}(\xi_1,\dots,\xi_r; z)$ and
which is constructed
by the following iterated integrals:
%\begin{align}
\begin{equation}
%&
Li^{(p),\star,\varpi}_{1}(\xi_1;z)=-\log^\varpi (1-\xi_1z)=\int_0^z\frac{\xi_1}{1-\xi_1t}dt, 
\label{intLi1}
%\\
\end{equation}
\begin{align}
&
Li^{(p),\star,\varpi}_{n_1,\dots,n_r}(\xi_1,\dots,\xi_r; z) \label{intLi2} 
\\
& \ \ =
\begin{cases}
\int_0^zLi^{(p),\star,\varpi}_{n_1,\dots,n_{r-1},n_r-1}(\xi_1,\dots,\xi_r; t)\frac{dt}{t} &\text{if}\quad n_r\neq 1 ,\\
\int_0^zLi^{(p),\star,\varpi}_{n_1,\dots,n_{r-1}}(\xi_1,\dots,\xi_{r-2},\xi_{r-1}\xi_r; t)
\{\frac{\xi_r}{1-\xi_r t}+\frac{1}{t}\}dt &\text{if}\quad n_r=1. 
\end{cases}
\notag
\end{align}
%\end{align}
\end{prp}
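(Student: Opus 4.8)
The plan is to follow the proof of \cite[Theorem-Definition 3.32]{FKMT2}, organized as an induction on the depth $r$, the one genuinely new ingredient being the treatment of non-positive (more generally, non-$1$) values of $n_r$. The underlying mechanism is Coleman's iterated integration as developed in \cite[\S 3.2]{FKMT2}: the Coleman functions attached to $\varpi$ on ${\bf P}^1\setminus S_r$ form a ring which contains every overconvergent function on ${\bf P}^1\setminus S_r$ (in particular every rational function regular there), is stable under $z\frac{d}{dz}$, and has the property that if $g$ lies in it and $\omega$ is a differential of the third kind on ${\bf P}^1$ with polar locus inside $S_r$, then the normalized primitive $\int_0^z g\,\omega$ lies in it again, with polar locus contained in $S_r$ together with the poles of $\omega$; this is exactly \cite[Lemmas 3.14 and 3.15]{FKMT2}, used already in the proof of Proposition \ref{rigidness II}, and the branch ambiguity of such primitives is rigidified precisely by the choice of $\varpi$.

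First I would do the base case $r=1$. For $n_1=1$ one has $Li^{(p),\star,\varpi}_1(\xi_1;z)=-\log^\varpi(1-\xi_1z)=\int_0^z\frac{\xi_1\,dt}{1-\xi_1t}$ (Lemma \ref{differential equations for Li}(iii)), the standard $p$-adic logarithm, which is a Coleman function with singularities only at $\overline{\xi_1^{-1}}$ and $\overline{\infty}$, hence inside $S_1$. For $n_1>1$ one iterates $\int_0^z(\cdots)\frac{dt}{t}$ starting from $n_1=1$, using Lemma \ref{differential equations for Li}(i); no singularity beyond $\bar 0,\overline{\infty}$ appears. For $n_1\leqslant 0$ the series $Li^{(p),\star}_{n_1}(\xi_1;z)=\sum_{k\geqslant 1}k^{-n_1}\xi_1^kz^k$ is a rational function of $z$ (namely $(z\frac{d}{dz})^{-n_1}$ applied to $\frac{\xi_1z}{1-\xi_1z}$), regular off $\overline{\xi_1^{-1}}$, hence overconvergent on ${\bf P}^1\setminus S_1$ and a fortiori Coleman --- this is the observation already used in Proposition \ref{rigidness II}(i).

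Next, fixing $r>1$ and assuming the statement in all depths $<r$, I would treat the index $(n_1,\dots,n_r)$ in three stages. (a) For $n_r=1$: by Lemma \ref{differential equations for Li}(ii) the derivative of $Li^{(p),\star}_{n_1,\dots,n_r}(\xi_1,\dots,\xi_r;z)$ equals $\bigl\{\frac{\xi_r}{1-\xi_rz}+\frac1z\bigr\}$ times $Li^{(p),\star}_{n_1,\dots,n_{r-1}}(\xi_1,\dots,\xi_{r-2},\xi_{r-1};\xi_rz)$, and the latter equals $Li^{(p),\star}_{n_1,\dots,n_{r-1}}(\xi_1,\dots,\xi_{r-2},\xi_{r-1}\xi_r;z)$, which by the inductive hypothesis extends to a Coleman function attached to $\varpi$ with polar locus inside $\{\bar 0,\overline{\infty},\overline{(\xi_{r-1}\xi_r)^{-1}},\dots,\overline{(\xi_1\cdots\xi_r)^{-1}}\}$; since $\bigl\{\frac{\xi_r}{1-\xi_rz}+\frac1z\bigr\}dz$ has simple poles only at $\overline{\xi_r^{-1}},\bar 0,\overline{\infty}$, the primitive $\int_0^z(\cdots)$ is Coleman with polar locus inside $S_r$, and matching Taylor expansions at $0$ --- using $Li^{(p),\star}(\cdots;0)=0$, clear from \eqref{series expression} --- shows it restricts on $]\bar 0[$ to the power series, so \eqref{intLi2} holds here. (b) For $n_r>1$: sub-induct upward on $n_r$, each step being $\int_0^z(\cdots)\frac{dt}{t}$ via Lemma \ref{differential equations for Li}(i), which again keeps the polar locus inside $S_r$. (c) For $n_r\leqslant 0$: go \emph{downward} from the $n_r=1$ case just established, using the rearrangement $Li^{(p),\star}_{n_1,\dots,n_{r-1},n_r-1}=z\frac{d}{dz}Li^{(p),\star}_{n_1,\dots,n_{r-1},n_r}$ of Lemma \ref{differential equations for Li}(i) and the stability of the Coleman ring under $z\frac{d}{dz}$ (which never enlarges the polar locus); the same differential relation together with $Li^{(p),\star}(\cdots;0)=0$ shows this extension is the one described by \eqref{intLi2}. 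Alternatively --- and this is how the analogous non-positive cases were disposed of in Proposition \ref{rigidness II} --- stage (c) can be deduced from the overconvergence of $\ell^{(p),\star}_{n_1,\dots,n_r}$ (Corollary \ref{rigidness 3}) by expressing $Li^{(p),\star}_{n_1,\dots,n_r}$ on $]\bar 0[$ through the $\ell^{(p),\star}$'s and Frobenius pullbacks $z\mapsto z^p$ of lower-depth $Li^{(p),\star}$'s, each of which is already Coleman.

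I expect the main obstacle to be precisely stage (c): the recursion encoded in \eqref{intLi2} strictly lowers complexity only for $n_r\geqslant 1$, so when $n_r\leqslant 0$ one cannot manufacture $Li^{(p),\star}_{n_1,\dots,n_r}$ by naive integration, and some extra input --- the $z\frac{d}{dz}$-stability above, or equivalently the overconvergence result Proposition \ref{rigidness II}/Corollary \ref{rigidness 3} --- is needed, in the same spirit as the non-positive-weight arguments there. A secondary, mostly bookkeeping difficulty is verifying at every step that the polar locus remains inside $S_r$, with no spurious singularity created at $\bar 0$ or $\overline{\infty}$; this is handled by tracking orders of vanishing at $0$ and $\infty$, using $Li^{(p),\star}(\cdots;0)=0$, exactly as in \cite[\S 3]{FKMT2}.
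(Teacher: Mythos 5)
Your proposal is correct and takes essentially the same route as the paper: an induction on the depth $r$ with the subcases $n_r=1$ (integrate against $\{\frac{\xi_r}{1-\xi_r t}+\frac1t\}dt$, using the vanishing at $0$ to keep the integrand pole-free there), $n_r>1$ (upward sub-induction on $n_r$ via $\int_0^z(\cdots)\frac{dt}{t}$), and $n_r<1$ (descend from the $n_r=1$ case by the differential equation of Lemma \ref{differential equations for Li}(i), since differentials of Coleman functions are again Coleman functions). Your stage (c), which you flag as the main obstacle, is exactly how the paper handles the non-positive exponents, so no gap remains.
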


Here $A^\varpi_{\textrm{Col}}({\bf P}^1\setminus S_r)$
means the space of Coleman functions of ${\bf P}^1\setminus S_r$
(consult \cite[Notation 3.25]{FKMT2}).

\begin{proof}
%It is achieved by the induction on depth $r$.
The proof of \cite[Theorem-Definition 3.32]{FKMT2}  was done by the induction on the weight
but here it is achieved by the induction on the depth $r$.

(i) Assume that $r=1$.
By \cite[Theorem-Definition 3.32]{FKMT2}, we  know 
$Li^{(p),\star,\varpi}_{n_1}(\xi_1; z)\in A^\varpi_{\textrm{Col}}({\bf P}^1\setminus S_1)$ when $n_1>0$.
When $n_1\leqslant  0$, it is immediate to see the assertion by 
the differential equation in Lemma \ref{differential equations for Li} (iii)
because differentials of Coleman functions are again 
Coleman functions.

(ii) Assume that $r>1$ and $n_r=1$.
Then by our induction assumption on $r$,
$Li^{(p),\star,\varpi}_{n_1,\dots,n_{r-1}}(\xi_1,\dots,\xi_{r-1}; z)\in A^\varpi_{\textrm{Col}}({\bf P}^1\setminus S_{r-1})$
and also $Li^{(p),\star,\varpi}_{n_1,\dots,n_{r-1}}(\xi_1,\dots,\xi_{r-1}; 0)=0$.
Hence $Li^{(p),\star,\varpi}_{n_1,\dots,n_{r-1}}(\xi_1,\dots,\xi_{r-1}; t)$
has a zero at $t=0$.
%is equal to $0$ because it is an integration from $0$ to $0$.
Therefore the integrand on the right-hand side of \eqref{intLi2} has no pole at $t=0$.
So the integration \eqref{intLi2} starting from $0$ makes sense and
whence we have
\begin{equation}\label{Li-Col-nr1}
Li^{(p),\star,\varpi}_{n_1,\dots,n_{r-1},1}(\xi_1,\dots,\xi_{r-1},\xi_r; z)\in A^\varpi_{\textrm{Col}}({\bf P}^1\setminus S_{r}).
\end{equation}

(iii) Assume that $r>1$ and $n_r<1$.
%In our (ii) above, we showed that \\
%$Li^{(p),\varpi}_{n_1,\dots,n_{r-1},1}(\xi_1,\dots,\xi_{r-1},\xi_r; z)\in A^\varpi_{\textrm{Col}}({\bf P}^1\setminus S_{r})$.
It is immediate to prove
$$Li^{(p),\star,\varpi}_{n_1,\dots,n_{r-1},n_r}(\xi_1,\dots,\xi_{r-1},\xi_r; z)\in A^\varpi_{\textrm{Col}}({\bf P}^1\setminus S_{r})$$
by \eqref{Li-Col-nr1} and
the differential equation in Lemma \ref{differential equations for Li} (i).
%by taking iterated differentials of the Coleman function. 

(iv) Assume that $r>1$ and $n_r>1$.
The proof can be achieved by the induction on $n_r$.
By our induction assumption, 
$Li^{(p),\star,\varpi}_{n_1,\dots,n_{r}-1}(\xi_1,\dots,\xi_{r}; z)\in A^\varpi_{\textrm{Col}}({\bf P}^1\setminus S_{r})$
and also
$Li^{(p),\star,\varpi}_{n_1,\dots,n_r-1}(\xi_1,\dots,\xi_r; 0)=0$. 
%is equal to $0$ because it is an integration from $0$ to $0$.
Hence $Li^{(p),\star,\varpi}_{n_1,\dots,n_r-1}(\xi_1,\dots,\xi_r; t)$
has a zero at $t=0$.
%is equal to $0$ because it is an integration from $0$ to $0$.
Therefore the integrand on the right-hand side of \eqref{intLi2} has no pole at $t=0$.
The integration \eqref{intLi2} starting from $0$ makes sense and thus
we have 
$Li^{(p),\star,\varpi}_{n_1,\dots,n_r}(\xi_1,\dots,\xi_r; z)\in A^\varpi_{\textrm{Col}}({\bf P}^1\setminus S_{r}).$
\end{proof}

It should be noted that the restriction of the $p$-adic TMSPL
$Li^{(p),\star,\varpi}_{n_1,\dots,n_r}(\xi_1,\dots,\xi_r ;z)$
to
${\bf P}^1({\mathbb C}_p) -\ ]{S_r}\setminus\{\overline{0}\} [$
does not depend on any choice of  the branch $\varpi\in\mathbb{C}_p$,
which can be proved in the same way as \cite[Proposition 3.34]{FKMT2}.

In particular, we remind that it is shown in \cite[Theorem-Definiton 3.38]{FKMT2} that,
for $\rho_1,\dots,\rho_r\in\mu_p$ and
$\xi_1,\dots,\xi_r\in\mu_c$ with $(c,p)=1$ and
$$\xi_1\cdots\xi_r\neq 1, \quad \xi_2\cdots\xi_r\neq 1,\quad
\dots, \quad  \xi_{r-1}\xi_r\neq 1,\quad \xi_r\neq 1,$$
the special value of
$Li^{(p),\star,\varpi}_{n_1,\dots,n_r}(\rho_1\xi_1,\dots,\rho_r\xi_r; z)$
at $z=1$
is independent of the choice of $\varpi$
and the value is denoted by
$Li^{(p),\star}_{n_1,\dots,n_r}(\rho_1\xi_1,\dots,\rho_r\xi_r)$
in short, is called the {\bf $p$-adic twisted multiple $L$-star value}.
% (see Yamashita \cite{Ya}).

The following result is an extension of \cite[Theorem 3.36]{FKMT2} to the case of indices %that are not necessarily all positive integers
with arbitrary integers, where we give a relationship between
our $p$-adic rigid TMSPL
$\ell^{(p),\star}_{n_1,\dots,n_r}(\xi_1,\dots,\xi_{r}; z)$
%is described by 
and our $p$-adic TMSPL
$Li^{(p),\star,\varpi}_{n_1,\dots,n_r}(\xi_1,\dots,\xi_r ;z)$. % as follows:
\begin{prp}\label{ell-Li reformulation prop}
Fix a branch  $\varpi\in\mathbb C_p$.
Let $n_1,\dots,n_r\in \mathbb{Z}$,
$\xi_1,\dots,\xi_{r}\in\mathbb{C}_p$
with $|\xi_j|_p= 1$ $(1\leqslant  j\leqslant  r)$.
The equality
\begin{align}
\ell^{(p),\star}_{n_1,\dots,n_r}&(\xi_1,\dots,\xi_r; z) 
%\notag \\
=Li^{(p),\star,\varpi}_{n_1,\dots,n_r}(\xi_1,\dots, \xi_r; z) \label{ell-Li reformulation}  \\
&+\sum_{d=1}^r\left(-\frac{1}{p}\right)^d
\sum_{1\leqslant  i_1<\cdots<i_d\leqslant  r}\sum_{\rho_{i_1}^p=1}\cdots\sum_{\rho_{i_d}^p=1}
Li^{(p),\star,\varpi}_{n_1,\dots,n_r}\Bigl(\bigl((\prod_{l=1}^d\rho_{i_l}^{\delta_{i_lj}})\xi_j\bigr); z\Bigr)
\notag
\end{align}
holds for $z\in{\bf P}^1({\mathbb C}_p) - ]{S_r}\setminus\{\overline{0}\} [$, 
where $\delta_{ij}$ is the Kronecker delta. 
\end{prp}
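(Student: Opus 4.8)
The plan is to establish \eqref{ell-Li reformulation} first as an identity of convergent power series on the residue disc $]\bar 0[$, and then to propagate it to all of ${\bf P}^1({\mathbb C}_p)-\,]S_r\setminus\{\bar 0\}[$ by rigid analytic continuation, following the same scheme as the proof of \cite[Theorem 3.36]{FKMT2}; the only extra input needed is that the relevant overconvergence and Coleman-continuation statements are now available for arbitrary integer indices, namely Corollary \ref{rigidness 3} and Proposition \ref{Coleman function theorem}. The starting observation is the elementary character-sum identity
\begin{equation*}
\frac{1}{p}\sum_{\rho^p=1}\rho^{k}=
\begin{cases}
1&(p\mid k),\\
0&(p\nmid k),
\end{cases}
\qquad\text{i.e.}\qquad
\mathbf{1}_{(k,p)=1}=1-\frac{1}{p}\sum_{\rho^p=1}\rho^{k}\qquad(k\in\mathbb{Z}),
\end{equation*}
which will be used to convert the coprimality constraints in the definition \eqref{series expression for ell} of $\ell^{(p),\star}_{n_1,\dots,n_r}$ into an averaging over $\mu_p^r$.

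Concretely, I would first insert one such indicator for each running index $k_1,\dots,k_r$ into \eqref{series expression for ell}, so that $\ell^{(p),\star}_{n_1,\dots,n_r}(\xi_1,\dots,\xi_r;z)$ becomes the sum over all $0<k_1\leqslant\cdots\leqslant k_r$ of $\bigl(\prod_{j=1}^r\mathbf{1}_{(k_j,p)=1}\bigr)\xi_1^{k_1}\cdots\xi_r^{k_r}k_1^{-n_1}\cdots k_r^{-n_r}z^{k_r}$, then expand the product $\prod_{j=1}^{r}\bigl(1-\tfrac1p\sum_{\rho_j^p=1}\rho_j^{k_j}\bigr)$ and interchange the resulting finite sum over subsets $I\subseteq\{1,\dots,r\}$ (and the finite sums over the $\rho_i\in\mu_p$, $i\in I$) with the series over $(k_1,\dots,k_r)$; this interchange is harmless since $|\rho_j\xi_j|_p=1$, so every series in sight converges on $]\bar 0[$. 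For a fixed $I=\{i_1<\cdots<i_d\}$ and fixed $\rho_{i_1},\dots,\rho_{i_d}\in\mu_p$, the inner sum is by definition $Li^{(p),\star}_{n_1,\dots,n_r}$ evaluated at the tuple whose $j$-th entry is $\rho_j\xi_j$ for $j\in I$ and $\xi_j$ otherwise, i.e.\ at $\bigl((\prod_{l=1}^{d}\rho_{i_l}^{\delta_{i_lj}})\xi_j\bigr)$, and the accompanying scalar is $(-1/p)^{|I|}$. Isolating the term $I=\emptyset$ (which produces $Li^{(p),\star,\varpi}_{n_1,\dots,n_r}(\xi_1,\dots,\xi_r;z)$) recovers exactly the right-hand side of \eqref{ell-Li reformulation}, so far as an identity on $]\bar 0[$.

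Finally I would promote this to the full domain. The left-hand side lies in $A^\dag({\bf P}^1\setminus S)$ by Corollary \ref{rigidness 3}; each term $Li^{(p),\star,\varpi}_{n_1,\dots,n_r}((\rho_j\xi_j)_j;z)$ on the right lies in $A^\varpi_{\mathrm{Col}}({\bf P}^1\setminus S_r)$ by Proposition \ref{Coleman function theorem}, and its restriction to ${\bf P}^1({\mathbb C}_p)-\,]S_r\setminus\{\bar 0\}[$ is $\varpi$-independent by the remark following that proposition. Hence both sides of \eqref{ell-Li reformulation} are rigid analytic on the connected domain ${\bf P}^1({\mathbb C}_p)-\,]S_r\setminus\{\bar 0\}[$, which contains $]\bar 0[$, and since they agree on $]\bar 0[$ they agree everywhere by the coincidence principle for rigid analytic functions (\cite[Proposition 3.3]{FKMT2}). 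The combinatorial step is formal, so the only bookkeeping point deserving care is that the substitutions $\xi_j\mapsto\rho_j\xi_j$ ($\rho_j\in\mu_p$) must not enlarge the bad locus $S_r$ — otherwise the $Li$-terms on the right would a priori live on different spaces — but this is immediate, since any $p$-th root of unity reduces to $1$ in $\overline{\mathbb{F}}_p$, whence $\overline{(\rho_j\xi_j)^{-1}}=\overline{\xi_j^{-1}}$ and $S_r$ is unchanged. Allowing the $n_j$ to be arbitrary integers introduces no new difficulty beyond what is already absorbed into Corollary \ref{rigidness 3} and Proposition \ref{Coleman function theorem}.
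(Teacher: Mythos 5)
Your overall scheme is the paper's: prove \eqref{ell-Li reformulation} on $]\bar{0}[$ by a power-series manipulation, then extend using Corollary \ref{rigidness 3} for the left-hand side, Proposition \ref{Coleman function theorem} for the right-hand side, and a coincidence principle. Your $\mu_p$-character-sum computation (inserting $\mathbf{1}_{(k,p)=1}=1-\frac{1}{p}\sum_{\rho^p=1}\rho^{k}$ for each index and expanding over subsets $I=\{i_1<\cdots<i_d\}$) is exactly the ``direct calculation'' the paper invokes, and your observation that twisting $\xi_j$ by $\rho_j\in\mu_p$ does not enlarge $S_r$, since $\overline{\rho_j}=\bar{1}$ in $\overline{\mathbb{F}}_p$, is correct and worth making explicit.

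The final step, however, does not hold as written. From the facts that each $Li^{(p),\star,\varpi}_{n_1,\dots,n_r}\bigl((\prod_l\rho_{i_l}^{\delta_{i_lj}})\xi_j;z\bigr)$ lies in $A^\varpi_{\textrm{Col}}({\bf P}^1\setminus S_r)$ and that its restriction to ${\bf P}^1({\mathbb C}_p)-\,]S_r\setminus\{\bar{0}\}[$ is independent of $\varpi$, you conclude that the right-hand side is \emph{rigid analytic} on that domain and then apply the coincidence principle for rigid analytic functions (\cite[Proposition 3.3]{FKMT2}). Branch-independence does not imply rigid analyticity: Coleman functions are a priori only locally analytic iterated integrals, and a branch-independent Coleman function restricted to a union of residue discs need not belong to $A^{\mathrm{rig}}$ of any affinoid containing them --- $\log^\varpi(z)$ on the units is the standard example (it is branch-independent there, yet admits no Laurent expansion $\sum_{n\in\mathbb{Z}}a_nz^n$ on the unit circle, as termwise differentiation of $dz/z$ shows). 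So \cite[Proposition 3.3]{FKMT2} cannot be applied to the right-hand side. The repair is the route the paper takes: use the inclusion $A^\dag({\bf P}^1\setminus S_r)\subset A^\varpi_{\textrm{Col}}({\bf P}^1\setminus S_r)$ to regard both sides as Coleman functions, and invoke the coincidence principle for Coleman functions (\cite[Proposition 3.27]{FKMT2}) applied to the identity already established on $]\bar{0}[$; this gives the equality on all of ${\bf P}^1({\mathbb C}_p)-\,]S_r\setminus\{\bar{0}\}[$ with a fixed branch $\varpi$, and the branch-independence of the right-hand side there comes out as a consequence (as in \cite[Proposition 3.34]{FKMT2}) rather than serving as an input. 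With that substitution your argument is complete and coincides with the paper's proof.
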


\begin{proof}
By using the power series expansions 
\eqref{series expression for partial double}
and \eqref{series expression}, % and Remark \ref{easy remark} (ii),
direct calculations show that the equality holds on $]\bar 0[$.
By Corollary \ref{rigidness 3},
the left-hand side belongs to 
$A^\dag({\bf P}^1
%_{\overline{\mathbb{F}}_p} 
\setminus{S_r})$
($\subset A^\varpi_{\textrm{Col}}({\bf P}^1 \setminus{S_r})$), 
while by %Theorem-Definition 
Proposition \ref{Coleman function theorem}, 
the right-hand side  belongs to $A^\varpi_{\textrm{Col}}({\bf P}^1  \setminus{S_r})$.
Therefore
by the coincidence principle  (consult \cite[Proposition 3.27]{FKMT2}),
% (Proposition \ref{coincidence principle}),
the equality holds on the whole space of
${\bf P}^1({\mathbb C}_p) -\ ]{S_r}\setminus\{\overline{0}\} [$,
in fact, on an affinoid bigger than the space.
\end{proof}

\if0
Our main theorem in this section is the following,
which  is an extension of \cite[Theorem 3.41]{FKMT2} to the case of indices %that are not necessarily all positive integers. %Here we recall the $p$-adic multiple $L$-value $Li^{(p)}_{n_1,\dots,n_r}(\xi_1,\dots,\xi_r)$ introduced by Yamashita \cite{Ya} which is the special value at $z = 1$ of \eqref{series expression}.
with arbitrary integers.

\fi
Our  main theorem in this section is the following,
which could be regarded as  an extension of \cite[Theorem 3.41]{FKMT2}
to the case of indices % that are not necessarily all positive integers
with arbitrary integers 
and might be also regarded as  an extension of \cite[Theorem 2.1]{FKMT2}
to the case of  indices %that are not necessarily all positive integers 
with arbitrary integers 
in the special case of  $\gamma_1=\cdots=\gamma_r=1$.

\begin{thm}\label{L-Li theorem-2}
For  $n_1,\dots,n_r\in \mathbb{Z}$
and $c\in \mathbb{N}_{>1}$ with $(c,p)=1$,
\begin{align}
& L_{p,r} (n_1,\dots,n_r;\omega^{-n_1},\dots,\omega^{-n_r};c) \label{val-pMLF} \\
&=
\underset{\xi_1\neq 1}{\sum_{\xi_1^c=1}} \cdots
\underset{\xi_r\neq 1}{\sum_{\xi_r^c=1}} 
Li^{(p),\star}_{n_1,\dots,n_r}\left(\frac{\xi_1}{\xi_2},\frac{\xi_2}{\xi_3},\dots,\frac{\xi_{r}}{\xi_{r+1}}\right)   \notag \\
&+\sum_{d=1}^r\left(-\frac{1}{p}\right)^d
\sum_{1\leqslant  i_1<\cdots<i_d\leqslant  r}\sum_{\rho_{i_1}^p=1}\cdots\sum_{\rho_{i_d}^p=1}
%\underset{\eqref{assumption for pMLV}}
%{\sum_{\xi_1^c=\cdots=\xi_r^c=1}} 
\underset{\xi_1\neq 1}{\sum_{\xi_1^c=1}} \cdots
\underset{\xi_r\neq 1}{\sum_{\xi_r^c=1}} 
Li^{(p),\star}_{n_1,\dots,n_r}\Bigl(\bigl(\frac{\prod_{l=1}^d\rho_{i_l}^{\delta_{i_lj}}\xi_j}{\xi_{j+1}}\bigr)\Bigr), \notag
\end{align}
where we put $\xi_{r+1}=1$.
\end{thm}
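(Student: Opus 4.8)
The plan is to deduce \eqref{val-pMLF} by combining Proposition \ref{L-ell prop}, which expresses the left-hand side as a sum of special values at $z=1$ of $p$-adic rigid TMSPL's, with Proposition \ref{ell-Li reformulation prop}, which rewrites each such value in terms of $p$-adic TMSPL values, and then performing a bijective change of the summation variables. No new analytic input is needed beyond these two propositions and the constructions preceding them.

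\emph{Step 1 (change of variables).}\ Given $(\eta_1,\dots,\eta_r)$ with $\eta_j^c=1$ $(1\leqslant j\leqslant r)$, set $\xi_j:=\prod_{k=j}^r\eta_k$ for $1\leqslant j\leqslant r$ and $\xi_{r+1}:=1$; then $\eta_j=\xi_j/\xi_{j+1}$, $\xi_j^c=1$, and tautologically $\xi_j\neq 1$ is equivalent to $\prod_{k=j}^r\eta_k\neq 1$. Since $\eta_j=\xi_j/\xi_{j+1}$ recovers $(\eta_j)$ from $(\xi_j)$, the assignment $(\eta_j)\mapsto(\xi_j)$ is a bijection from $\{(\eta_j)\mid \eta_j^c=1,\ \prod_{k\geqslant j}\eta_k\neq 1\ (1\leqslant j\leqslant r)\}$ onto $\{(\xi_j)\mid \xi_j^c=1,\ \xi_j\neq 1\ (1\leqslant j\leqslant r)\}$, and $\ell^{(p),\star}_{n_1,\dots,n_r}(\eta_1,\dots,\eta_r;z)=\ell^{(p),\star}_{n_1,\dots,n_r}(\xi_1/\xi_2,\dots,\xi_r/\xi_{r+1};z)$.

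\emph{Step 2 (specialization at $z=1$).}\ By Proposition \ref{L-ell prop}, the left-hand side of \eqref{val-pMLF} equals the sum of $\ell^{(p),\star}_{n_1,\dots,n_r}(\eta_1,\dots,\eta_r;1)$ over the first index set of Step 1. For each such tuple, the hypotheses $\prod_{k\geqslant j}\eta_k\neq 1$ imply — since $(c,p)=1$ makes reduction modulo $p$ injective on $\mu_c$ — that $\overline{(\prod_{k\geqslant j}\eta_k)^{-1}}\neq\bar 1$, so $\bar 1$ lies outside $]S_r\setminus\{\bar 0\}[$, where $S_r$ is the removed locus attached to $(\eta_1,\dots,\eta_r)$ as in Proposition \ref{Coleman function theorem}; multiplying some $\eta_j$ by elements of $\mu_p$ does not change this, because $\mu_p$ reduces to $\{\bar 1\}$. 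Hence $z=1$ is an admissible specialization point, the values $Li^{(p),\star}_{n_1,\dots,n_r}(\cdots)$ at $z=1$ are well defined and independent of $\varpi$ (as recalled after Proposition \ref{Coleman function theorem}), and Proposition \ref{ell-Li reformulation prop} at $z=1$ yields
\[
\ell^{(p),\star}_{n_1,\dots,n_r}(\eta_1,\dots,\eta_r;1)
=Li^{(p),\star}_{n_1,\dots,n_r}(\eta_1,\dots,\eta_r)
+\sum_{d=1}^{r}\Bigl(-\tfrac1p\Bigr)^{d}
\sum_{1\leqslant i_1<\cdots<i_d\leqslant r}\ \sum_{\rho_{i_1}^p=1}\cdots\sum_{\rho_{i_d}^p=1}
Li^{(p),\star}_{n_1,\dots,n_r}\Bigl(\bigl(\prod_{l=1}^{d}\rho_{i_l}^{\delta_{i_lj}}\bigr)\eta_j\Bigr).
\]

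\emph{Step 3 (summation and rearrangement).}\ Summing this identity over $(\eta_1,\dots,\eta_r)$ in the first index set of Step 1 and substituting $\eta_j=\xi_j/\xi_{j+1}$ via the bijection of Step 1 turns the outer sum into one over $\xi_j^c=1$, $\xi_j\neq 1$ $(1\leqslant j\leqslant r)$: the leading terms give $\sum_{\xi_j^c=1,\ \xi_j\neq 1}Li^{(p),\star}_{n_1,\dots,n_r}(\xi_1/\xi_2,\dots,\xi_r/\xi_{r+1})$, which is the first double sum in \eqref{val-pMLF} (recall $\xi_{r+1}=1$), and pulling the finitely many $d$-, $(i_1,\dots,i_d)$- and $(\rho_{i_l})$-summations outside the $(\xi_j)$-summation turns the remaining terms into the second sum of \eqref{val-pMLF}. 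I expect the only non-routine point to be the verification in Step 2 that $z=1$ stays outside the removed locus of the pertinent rigid and Coleman functions — that $\overline{(\prod_{k\geqslant j}\eta_k)^{-1}}$ and all of its $\mu_p$-twists differ from $\bar 1$ — which is precisely where the conditions $\xi_j\neq 1$ (equivalently $\prod_{k\geqslant j}\eta_k\neq 1$) and $(c,p)=1$ enter; everything else is a rearrangement of finitely many sums.
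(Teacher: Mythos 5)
Your proposal is correct and follows exactly the paper's route: the paper proves Theorem \ref{L-Li theorem-2} in one line by citing Proposition \ref{L-ell prop} and Proposition \ref{ell-Li reformulation prop}, and your Steps 1--3 simply make explicit the change of variables $\eta_j=\xi_j/\xi_{j+1}$ and the admissibility of the specialization $z=1$ (which the paper handles via the recollection of \cite[Theorem-Definition 3.38]{FKMT2} preceding the theorem). No discrepancy to report.
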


\begin{proof}
It follows from 
Proposition \ref{L-ell prop} and Proposition \ref{ell-Li reformulation prop}.
\end{proof}

\begin{rem}\label{Rem-fin}
From \eqref{MPL-03}, we have
\begin{equation}
\underset{\xi_1\neq 1}{\sum_{\xi_1^c=1}} \cdots
\underset{\xi_r\neq 1}{\sum_{\xi_r^c=1}} \zeta_r((n_j);(\xi_j);(1))
=\underset{\xi_1\neq 1}{\sum_{\xi_1^c=1}} \cdots
\underset{\xi_r\neq 1}{\sum_{\xi_r^c=1}} Li_{n_1,\ldots,n_r}\left(\frac{\xi_1}{\xi_2},\frac{\xi_{2}}{\xi_3},\ldots,\frac{\xi_{r}}{\xi_{r+1}}\right), \label{MPL-04}
\end{equation}
where $\xi_{r+1}=1$. Similarly, we obtain
\begin{equation}
\underset{\xi_1\neq 1}{\sum_{\xi_1^c=1}} \cdots
\underset{\xi_r\neq 1}{\sum_{\xi_r^c=1}} \zeta_r^\star((n_j);(\xi_j);(1))
=\underset{\xi_1\neq 1}{\sum_{\xi_1^c=1}} \cdots
\underset{\xi_r\neq 1}{\sum_{\xi_r^c=1}} Li_{n_1,\ldots,n_r}^\star\left(\frac{\xi_1}{\xi_2},\frac{\xi_{2}}{\xi_3},\ldots,\frac{\xi_{r}}{\xi_{r+1}}\right), \label{MPL-04-2}
\end{equation}
where
\begin{align}\label{Def-EZL-zeta-star}                                                       
\zeta_r^\star((n_j);(\xi_j);(1))&=\sum_{0<k_1\leqslant \cdots\leqslant k_r}\frac{(\xi_1/\xi_2)^{k_1}\cdots (\xi_r/\xi_{r+1})^{k_r}}{k_1^{n_1}\cdots k_r^{n_r}},
\if0
=\sum_{m_1=1}^{\infty}\sum_{m_2=0}^{\infty}\cdots                           
\sum_{m_r=0}^{\infty}\prod_{j=1}^r \xi_j^{m_j}(m_1+\cdots+m_j)          
^{-s_j},                                                                        \fi
\end{align}
with $\xi_{r+1}=1$ and 
\begin{align}\label{MPL-01-star}
Li_{n_1,\ldots,n_r}^\star(z_1,\ldots,z_r)&=\sum_{0<k_1\leqslant \cdots\leqslant k_r}\frac{z_1^{k_1}\cdots z_r^{k_r}}{k_1^{n_1}\cdots k_r^{n_r}}
\end{align}
for $(n_j)\in \mathbb{N}^r$ and $(z_j)\in \mathbb{C}^r$ with $|z_j|= 1$, 
which are star-versions of \eqref{Def-EZL-zeta} and \eqref{MPL-01}, respectively. 
Also \eqref{MPL-01-star} should be compared with \eqref{series expression}.
Note that 
Theorem \ref{L-Li theorem-2} can be regarded as a $p$-adic analogue of \eqref{MPL-04-2}. Therefore $L_{p,r} ((s_j);(\omega^{k_j});c)$ might be called the {\it $p$-adic multiple $L$-star function}.
\end{rem}

\begin{crl}\label{Cor-final}
For $n_1,\ldots,n_r\in \mathbb{N}_0$ 
and $c\in \mathbb{N}_{>1}$ with $(c,p)=1$, %. 
\begin{align}
& L_{p,r}(-n_1,\ldots,-n_r;\omega^{n_1},\ldots,\omega^{n_r};c)  \label{Cor-main} \\
& =\sum_{\xi_1^c=1 \atop \xi_1\not=1}\cdots\sum_{\xi_r^c=1 \atop \xi_r\not=1}\aaa((n_j);(\xi_j))\notag \\
& +\sum_{d=1}^{r}\left(-\frac{1}{p}\right)^d \sum_{1\leqslant i_1<\cdots<i_d\leqslant r}\sum_{\rho_{i_1}^p=1}\cdots\sum_{\rho_{i_d}^p=1}\sum_{\xi_1^c=1 \atop \xi_1\not=1}\cdots\sum_{\xi_r^c=1 \atop \xi_r\not=1}\aaa((n_j);((\prod_{j\leqslant i_l}\rho_{i_l})\xi_j)),\notag
\end{align}
where $\{\aaa((n_j);( \xi_j))\}$
 are certain 
{ twisted multiple Bernoulli numbers}
defined by 
\begin{align}
    &\frac{\xi_1 \exp\left( \sum_{\nu=1}^r t_\nu\right)}{1-\xi_1 \exp\left( \sum_{\nu=1}^r t_\nu\right)} \prod_{j=2}^{r} \frac{1}{1-\xi_j \exp\left( \sum_{\nu=j}^r t_\nu\right)}\label{Fro-def-r} \\
&  \quad   =\sum_{n_1=0}^\infty
    \cdots
    \sum_{n_r=0}^\infty
    \aaa((n_j);( \xi_j))
    \frac{t_1^{n_1}}{n_1!}
    \cdots
    \frac{t_r^{n_r}}{n_r!}.\notag
\end{align}
\end{crl}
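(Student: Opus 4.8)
The plan is to obtain \eqref{Cor-main} from Theorem \ref{L-Li theorem-2} by specializing that theorem to the index $(-n_1,\dots,-n_r)$ with $n_1,\dots,n_r\in\mathbb{N}_0$ and then rewriting every $p$-adic TMSPL value on its right-hand side as a twisted multiple Bernoulli number. Replacing $(n_j)$ by $(-n_j)$ in \eqref{val-pMLF} turns each character $\omega^{-n_j}$ into $\omega^{n_j}$, so the left-hand side becomes exactly $L_{p,r}(-n_1,\dots,-n_r;\omega^{n_1},\dots,\omega^{n_r};c)$, while the right-hand side becomes a finite $\mathbb{C}_p$-linear combination of values $Li^{(p),\star}_{-n_1,\dots,-n_r}(w_1,\dots,w_r;1)$ for explicit roots of unity $w_1,\dots,w_r$ (all of $p$-adic absolute value $1$). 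Thus the whole proof reduces to the following identification $(\ast)$: for $n_1,\dots,n_r\in\mathbb{N}_0$ and roots of unity $w_1,\dots,w_r$ with $\overline{w_jw_{j+1}\cdots w_r}\neq\overline 1$ for $1\leqslant j\leqslant r$, one has
\[
Li^{(p),\star}_{-n_1,\dots,-n_r}(w_1,\dots,w_r;1)=\aaa((n_j);(\eta_1,\dots,\eta_r)),\qquad \eta_j:=w_jw_{j+1}\cdots w_r .
\]

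To prove $(\ast)$ I would bundle the values $Li^{(p),\star}_{-n_1,\dots,-n_r}$ into a generating series in auxiliary parameters $t_1,\dots,t_r$. On $]\bar 0[$ the function $Li^{(p),\star}_{-n_1,\dots,-n_r}(w_1,\dots,w_r;z)$ is the power series \eqref{series expression}; reindexing by $m_1=k_1\geqslant 1$ and $m_i=k_i-k_{i-1}\geqslant 0$ $(2\leqslant i\leqslant r)$, using $\prod_j w_j^{\sum_{i\leqslant j}m_i}=\prod_i\eta_i^{m_i}$ and $\prod_j e^{(\sum_{i\leqslant j}m_i)t_j}=\prod_i e^{m_i(t_i+\cdots+t_r)}$, and summing the ensuing geometric series, one gets an identity of formal power series in $z,t_1,\dots,t_r$,
\[
\sum_{n_1,\dots,n_r\geqslant 0}Li^{(p),\star}_{-n_1,\dots,-n_r}(w_1,\dots,w_r;z)\prod_{j=1}^r\frac{t_j^{n_j}}{n_j!}=\frac{\eta_1 z\,e^{t_1+\cdots+t_r}}{1-\eta_1 z\,e^{t_1+\cdots+t_r}}\prod_{j=2}^r\frac{1}{1-\eta_j z\,e^{t_j+\cdots+t_r}}=:G(\bs{t};z).
\]
For each fixed $(n_j)$ the coefficient $R_{(n_j)}(z)$ of $\prod_j t_j^{n_j}/n_j!$ in $G(\bs{t};z)$ is a rational function of $z$ with poles only among $\eta_1^{-1},\dots,\eta_r^{-1}$ (and a zero at $z=0$), hence a Coleman function on ${\bf P}^1\setminus S_r$; on the other hand $Li^{(p),\star}_{-n_1,\dots,-n_r}(w_1,\dots,w_r;z)\in A^\varpi_{\textrm{Col}}({\bf P}^1\setminus S_r)$ by Proposition \ref{Coleman function theorem}, and the displayed identity shows these two Coleman functions agree on $]\bar 0[$. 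By the coincidence principle they coincide on ${\bf P}^1\setminus S_r$. The hypothesis $\overline{\eta_j}\neq\overline1$ (together with $\overline1\notin\{\overline0,\overline\infty\}$) ensures $\overline1\notin S_r$, so we may evaluate both sides at $z=1$; since $G(\bs{t};1)$ is the left-hand side of \eqref{Fro-def-r} with $\xi_j$ replaced by $\eta_j$, extracting coefficients yields $(\ast)$.

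Finally I would plug in the roots of unity appearing in \eqref{val-pMLF}. In the leading sum the $j$-th argument is $w_j=\xi_j/\xi_{j+1}$ with $\xi_{r+1}=1$, so $\eta_j=w_j\cdots w_r=\xi_j$ by telescoping; as $\xi_j\in\mu_c$, $(c,p)=1$ and $\xi_j\neq1$, the classical identity $\prod_{x^c=1,\,x\neq1}(1-x)=c$ shows each $1-\xi_j$ is a $p$-adic unit, hence $\overline{\eta_j}=\overline{\xi_j}\neq\overline1$, and $(\ast)$ turns the term into $\aaa((n_j);(\xi_1,\dots,\xi_r))$. In the $d$-fold correction sums the $j$-th argument is $w_j=(\prod_{l=1}^d\rho_{i_l}^{\delta_{i_lj}})\xi_j/\xi_{j+1}$, whose telescoping product is $\eta_j=(\prod_{j\leqslant i_l}\rho_{i_l})\xi_j$; since each $\rho_{i_l}\in\mu_p$ reduces to $\overline1$ we again have $\overline{\eta_j}=\overline{\xi_j}\neq\overline1$, so $(\ast)$ turns that term into $\aaa((n_j);((\prod_{j\leqslant i_l}\rho_{i_l})\xi_j))$. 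Substituting these into \eqref{val-pMLF} reproduces \eqref{Cor-main} verbatim.

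I expect the main obstacle to be the passage in $(\ast)$ from the power-series identity valid only on $]\bar 0[$ to an equality of values at $z=1$, where the defining series of $Li^{(p),\star}$ no longer converges: this forces one to certify that $R_{(n_j)}(z)$ is genuinely a Coleman (in fact rational) function regular in a neighbourhood of $z=1$, to check $\overline1\notin S_r$, and then to invoke the rigidity/coincidence principle exactly as in the proof of Proposition \ref{ell-Li reformulation prop}. The remaining ingredients — the reindexing of the multiple sum, the geometric summation, and the telescoping of the $w_j$ into the $\eta_j$ — are routine bookkeeping.
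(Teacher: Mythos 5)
Your proposal is correct and follows essentially the same route as the paper: specialize Theorem \ref{L-Li theorem-2} to the index $(-n_1,\dots,-n_r)$ and identify each value $Li^{(p),\star}_{-n_1,\dots,-n_r}$ at $z=1$ with the twisted multiple Bernoulli number $\aaa((n_j);(\xi_j))$ (after telescoping the arguments) via the generating-function identity \eqref{gene-ft-Li}. The only difference is one of detail: where you invoke the Coleman-function coincidence principle to justify the evaluation at $z=1$, the paper simply observes that $Li^{(p),\star}_{-n_1,\dots,-n_r}(\,\cdot\,;z)$ is a rational function of $z$ regular at $z=1$ and lets $z\to 1$ in \eqref{gene-ft-Li} to obtain \eqref{Li-val}.
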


\begin{proof}
We first show the following result which can be proved by the same method as in the proof of \cite[Lemma 5.9]{KT}. 
For $z\in ]\bar{0}[$, we obtain from the definition \eqref{series expression}   that 
\begin{align}
& \sum_{n_1=0}^\infty \cdots \sum_{n_r=0}^\infty Li^{(p),\star}_{-n_1,\dots,-n_r}\left(\frac{\xi_1}{\xi_2},\frac{\xi_2}{\xi_3},\dots,\frac{\xi_{r}}{\xi_{r+1}};z\right) \frac{t_1^{n_1}\cdots t_r^{n_r}}{n_1!\cdots n_r!}\label{gene-ft-Li}\\
& \quad =\frac{\xi_1ze^{\sum_{\nu=1}^{r}t_\nu}}{1-\xi_1ze^{\sum_{\nu=1}^{r}t_\nu}}\prod_{j=2}^{r}\frac{1}{1-\xi_jze^{\sum_{\nu=j}^{r}t_\nu}}\notag
\end{align}
(cf. \cite[(5.16)]{KT}). 
Since 
$Li^{(p),\star}_{-n_1,\dots,-n_r}\left({\xi_1}/{\xi_2},{\xi_2}/{\xi_3},\dots,{\xi_{r}}/{\xi_{r+1}};z\right)$
is a rational function in $z$, we can let $z\to 1$ on the both sides of \eqref{gene-ft-Li}. Hence it follows from \eqref{Fro-def-r} that 
\begin{equation}\label{Li-val}
Li^{(p),\star}_{-n_1,\dots,-n_r}\left(\frac{\xi_1}{\xi_2},\frac{\xi_2}{\xi_3},\dots,\frac{\xi_{r}}{\xi_{r+1}}\right)=\aaa((n_j);( \xi_j))\quad ((n_j)\in \mathbb{N}_0^r).
\end{equation}
Therefore we can see that the right-hand side of \eqref{val-pMLF} coincides with the right-hand side of \eqref{Cor-main}. This completes the proof.
\end{proof}

\begin{rem}
It should be emphasized that \eqref{Cor-main} with replacing $\aaa((n_j);(\xi_j))$ by $\aa((n_j);(\xi_j))$ 
defined by 
\begin{align*}
    &\prod_{j=1}^{r} \frac{1}{1-\xi_j \exp\left( \sum_{\nu=j}^r t_\nu\right)}
=\sum_{n_1=0}^\infty
    \cdots
    \sum_{n_r=0}^\infty
    \aa((n_j);( \xi_j))
    \frac{t_1^{n_1}}{n_1!}
    \cdots
    \frac{t_r^{n_r}}{n_r!}\notag
\end{align*}
(see \cite[Definition 1.4]{FKMT2}) is also valid; in fact, it is \cite[Theorem 2.1]{FKMT2}. 

\end{rem}

%\begin{rem}\label{Rem-fin2}
Finally, we consider the case $r=1$. Since
\begin{align*}
&\sum_{\xi^c=1 \atop \xi\neq 1}\frac{\xi e^{t}}{1-\xi e^{t}}=\frac{e^t}{e^t-1}-\frac{ce^{ct}}{e^{ct}-1}=\sum_{n=0}^\infty (1-c^{n+1})B_{n+1}\frac{t^n}{n!}+(1-c),\\
&\sum_{\rho^p=1}\sum_{\xi^c=1 \atop \xi\neq 1}\frac{\rho\xi e^{t}}{1-\rho\xi e^{t}}=\sum_{\rho^p=1}\left\{\frac{\rho e^t}{\rho e^t-1}-\frac{c\rho^c e^{ct}}{\rho^c e^{ct}-1}\right\}=\frac{pe^{pt}}{e^{pt}-1}-\frac{cp e^{cpt}}{e^{cpt}-1}\\
& \qquad =\sum_{n=0}^\infty (1-c^{n+1})p^{n+1}B_{n+1}\frac{t^n}{(n+1)!}+(1-c)p,
\end{align*}
we have 
\begin{equation*}
\sum_{\xi^c=1 \atop \xi\neq 1}\aaa(n;\xi)=
\begin{cases}
(1-c^{n+1})\frac{B_{n+1}}{n+1}& (n>0),\\
\frac{1-c}{2} & (n=0),
\end{cases}
\end{equation*}
\begin{equation*}
\sum_{\xi^c=1 \atop \xi\neq 1}\sum_{\rho^p=1}\aaa(n;\rho\xi)=
\begin{cases}
(1-c^{n+1})p^{n+1}\frac{B_{n+1}}{n+1}& (n>0),\\
\frac{(1-c)p}{2} & (n=0).
\end{cases}
\end{equation*}
Hence \eqref{Cor-main} implies that
\begin{align*}
L_{p,1}(-n;\omega^{n};c)&=\sum_{\xi^c=1 \atop \xi\neq 1}\aaa(n;\xi)
-\frac{1}{p}\sum_{\rho^p=1}\sum_{\xi^c=1 \atop \xi\not=1}\aaa(n;\rho\xi)\notag\\
& =
\begin{cases}
(1-c^{n+1})(1-p^{n})\frac{B_{n+1}}{n+1}& (n>0),\\
0 & (n=0).
\end{cases}
\end{align*}
By \eqref{KL-pMLF}, this can be rewritten as the Kubota-Leopoldt formula (\cite[Theorem 5.11]{Wa}):
\begin{align}\label{KL-F}
& L_{p}(1-n;\omega^{n})=-(1-p^{n-1})\frac{B_{n}}{n}\quad (n\in \mathbb{N}).
\end{align}
On the other hand, combining \eqref{val-pMLF} in the case $r=1$ and \eqref{KL-pMLF}, we obtain the Coleman formula (\cite{C}):
\begin{equation}\label{Coleman-F}
L_p(n;\omega^{1-n})=\left(1-\frac{1}{p^n}\right)Li_n^{(p),\star}(1)\quad (n\in \mathbb{N})
%(\text{\lq 1\rq}).
\end{equation}
(see \cite[Example 3.42]{FKMT2}). Therefore Theorem \ref{L-Li theorem-2} can be regarded as a generalization of both \eqref{KL-F} and \eqref{Coleman-F}.
%\end{rem}

\ 

\if0
\noindent
{\bf Acknowledgements.}\ 
The authors express their sincere gratitude to the referee for important suggestions. 
\fi

%%%%%%%%%%%%%%%%%%%%%%%%%%%%%%%%%%%%%%%%%%%%%%%%%%%%%%%%%%%%%%%%%%%%%%%%%%%%%
%%%%%%%%%%%%%%%%%%%%%%%%%%%%%%%%%%%%%%%%%%%%%%%%%%%%%%%%%%%%%%%%%%%%%%%%%%%%%
%%%%%%%%%%%%%%%%%%%%%%%%%%%%%%%%%%%%%%%%%%%%%%%%%%%%%%%%%%%%%%%%%%%%%%%%%%%%%
%%%%%%%%%%%%%%%%%%%%%%%%%%%%%%%%%%%%%%%%%%%%%%%%%%%%%%%%%%%%%%%%%%%%%%%%%%%%%

%%%%%%%%%%%%%%%%%%%%%%%%%%%%%%%%%%%%%%%%%%%%%%%%%%

\end{document}